\newtheorem{assumption}[theorem]{Assumption}
\newtheorem{assumption}[theorem]{Assumption}
\begin{document}

\title{A FE-inexact heterogeneous ADMM for Elliptic Optimal Control Problems with {$L^1$}-Control Cost}

\author{Xiaoliang Song$\mathbf{^1}$ \and  Bo Yu$\mathbf{^{1}}$ \and Yiyang Wang$\mathbf{^1}$\and \and Xuping Zhang Wang$\mathbf{^1}$}


\institute{Xiaoliang Song\at
           \email{songxiaoliang@mail.dlut.edu.cn}
           \and
            Bo Yu \at
           \email{yubo@dlut.edu.cn}
           \and
            Yiyang Wang \at
           \email{yywerica@gmail.com}
            \and
           Xuping Zhang \at
           \email{zhangxp@dlut.edu.cn}
           \and
           \at
           $\mathbf{^1}$ School of Mathematical Sciences,
Dalian University of Technology, Dalian, Liaoning, 116025, China
}



\maketitle

\begin{abstract}
 Elliptic PDE-constrained optimal control problems with $L^1$-control cost ($L^1$-EOCP) are considered. To solve $L^1$-EOCP, the primal-dual active set (PDAS) method, which is a special semismooth Newton (SSN) method, used to be a priority. However, in general solving Newton equations is expensive. Motivated by the success of alternating direction method of multipliers (ADMM), we consider extending the ADMM to $L^1$-EOCP. To discretize $L^1$-EOCP, the piecewise linear finite element (FE) is considered. However, different from the finite dimensional $l^1$-norm, the discretized $L^1$-norm does not have a decoupled form. To overcome this difficulty, an effective approach is utilizing nodal quadrature formulas to approximately discretize the $L^1$-norm and $L^2$-norm. It is proved that these approximation steps will not change the order of error estimates. To solve the discretized problem, an inexact heterogeneous ADMM (ihADMM) is proposed. Different from the classical ADMM, the ihADMM adopts two different weighted inner product to define the augmented Lagrangian function in two subproblems, respectively. Benefiting from such different weighted techniques, two subproblems of ihADMM can be efficiently implemented. Furthermore, theoretical results on the global convergence as well as the iteration complexity results $o(1/k)$ for ihADMM are given. In order to obtain more accurate solution, a two-phase strategy is also presented, in which the primal-dual active set (PDAS) method is used as a postprocessor of the ihADMM. Numerical results not only confirm error estimates, but also show that the ihADMM and the two-phase strategy are highly efficient.
\keywords{optimal control\and sparsity\and finite element\and ADMM}
\subclass{ 49N05\and 65N30\and 49M2\and 68W15}
\end{abstract}

\section{Introduction}
\label{intro}
In this paper, we study the following non-differentiable optimal control problem with \textsl{$L^1$}-control cost, which is known to lead to sparse controls:
\begin{equation}\label{eqn:orginal problems}
           \qquad \left\{ \begin{aligned}
\min \limits_{(y,u)\in Y\times U}^{}\ \ J(y,u)&=\frac{1}{2}\|y-y_d\|_{L^2(\Omega)}^{2}+\frac{\alpha}{2}\|u\|_{L^2(\Omega)}^{2}+\beta\|u\|_{L^1(\Omega)} \\
{\rm s.t.}\qquad\qquad Ay&=u+y_c\ \ \mathrm{in}\  \Omega, \\
y &=0\quad  \mathrm{on}\ \partial\Omega,\\
u &\in U_{ad}=\{v(x)|a\leq v(x)\leq b, {\rm a.e }\  \mathrm{on}\ \Omega\}\subseteq U,
                          \end{aligned} \right.\tag{$\mathrm{P}$}
 \end{equation}
where $Y:=H_0^1(\Omega)$, $U:=L^2(\Omega)$, $\Omega\subseteq \mathbb{R}^n$, $n=2$ or $3$, is a convex, open and bounded domain with $C^{1,1}$- or polygonal boundary $\Gamma$; $y_c$, $y_d \in L^2(\Omega)$ and parameters $-\infty<a<0<b<+\infty$, $\alpha$, $\beta>0$. Moreover, the operator $A$ is a second-order linear elliptic differential operator. Such the optimal control problem (\ref{eqn:orginal problems}) plays an important role in the placement of control devices \cite{Stadler}. In some cases, it is difficult or undesirable to place control devices all over the control domain and hope to localize controllers in small and most effective regions.

For the study of optimal control problems with sparsity promoting terms, as far as we know, the first paper devoted to this study is published by Stadler \cite{Stadler}, in which structural properties of the control variables were analyzed and two Newton-typed algorithms (including the semismooth Newton algorithm and the primal-dual active set method) were proposed in the case of the linear-quadratic elliptic optimal control problem. In 2011, a priori and a posteriori error estimates were first given by Wachsmuth and Wachsmuth in \cite{WaWa} for piecewise linear control discretizations, in which the convergence rate is obtained to be of order $\mathcal{O}(h)$ under the $L^2$ norm.
In a sequence of papers \cite{CaHerWa1,CaHerWaoptimal}, for the non-convex case governed by a semilinear elliptic equation, Casas et al. proved second-order necessary and sufficient optimality conditions. Using the second-order sufficient optimality conditions, the authors provided an error estimates of order $h$ w.r.t. the $L^\infty$ norm for three different choices of the control discretization (including the piecewise constant, piecewise linear control discretization and the variational control discretization).

Next, let us mention some existing numerical methods for solving problem (\ref{eqn:orginal problems}). Since problem (\ref{eqn:orginal problems}) is nonsmooth, thus applying semismooth Newton methods is used to be a priority. A special semismooth Newton method with the active set strategy, called the primal-dual active set (PDAS) method is introduced in \cite{BeItKu} for control constrained elliptic optimal control problems. It is proved to have the locally superlinear convergence (see \cite{Ulbrich2} for more details). Furthermore, mesh-independence results for semismooth Newton methods were established in \cite{meshindependent}. However, in general, it is expensive in solving Newton equations, especially when the discretization is in a fine level.

Recently, for the finite dimensional large scale optimization problem, some efficient first-order algorithms, such as iterative shrinkage/soft thresholding algorithms (ISTA) \cite{Blumen}, accelerated proximal gradient (APG)-based method \cite{inexactAPG,Beck,inexactABCD}, ADMM \cite{Boyd,SunToh1,SunToh2,Fazel}, etc., have become the state of the art algorithms. Thanks to the iteration complexity $O(1/k^2)$, a fast inexact proximal (FIP) method in function space, which is actually the APG method, was proposed to solve the problem (\ref{eqn:orginal problems}) in \cite{FIP}. As we know, the efficiency of the FIP method depends on how close the step-length is to the Lipschitz constant. However, in general, choosing an appropriate step-length is difficult since the Lipschitz constant is usually not available analytically. 

In this paper, we will mainly focus on the ADMM algorithm. The classical ADMM was originally proposed by Glowinski and Marroco \cite{GlowMarro} and Gabay and Mercier \cite{Gabay}, and it has found lots of efficient applications in a broad spectrum of areas. In particular, we refer to \cite{Boyd} for a review of the applications of ADMM in the areas of distributed optimization and statistical learning.

Motivated by the success of the finite dimensional ADMM algorithm, it is reasonable to consider extending the ADMM to infinite dimensional optimal control problems, as well as the corresponding discretized problems. In 2016, the authors \cite{splitbregmanforOCP} adapted the split Bregman method (equivalent to the classical ADMM) to handle PDE-constrained optimization problems with total variation regularization. However, for the discretized problem,
the authors did not take advantage of the inherent structure of problem and still used the classical ADMM to solve it.

In this paper, making full use of inherent structure of problem, we aim to design an appropriate ADMM-type algorithm to solve problem (\ref{eqn:orginal problems}). In order to employ the ADMM algorithm and obtain a separable 
by adding an artificial variable $z$, we can separate the smooth and nonsmooth terms and equivalently reformulate problem (\ref{eqn:orginal problems}) as:
\begin{equation}\label{eqn:modified problems}
 \left\{ \begin{aligned}
\min \limits_{(y,u,z)\in Y\times U\times U}^{}\ \ J(y,u,z)&=\frac{1}{2}\|y-y_d\|_{L^2(\Omega)}^{2}+\frac{\alpha}{4}\|u\|_{L^2(\Omega)}^{2}+\frac{\alpha}{4}\|z\|_{L^2(\Omega)}^{2}+\beta\|z\|_{L^1(\Omega)} \\
{\rm s.t.}\qquad\qquad\qquad Ay&=u+y_c\ \ \mathrm{in}\  \Omega, \\
y&=0\quad  \mathrm{on}\ \partial\Omega,\\
u&=z,\\
z&\in U_{ad}=\{v(x)|a\leq v(x)\leq b, {\rm a.e }\  \mathrm{on}\ \Omega\}\subseteq U.
                          \end{aligned} \right.\tag{$\mathrm{\widetilde{P}}$}
 \end{equation}
An attractive feature of problem (\ref{eqn:modified problems}) is that the objective function with respect to each variable is strongly convex, which ensures the existence and uniqueness of the optimal solution. Moreover, in many algorithms, strong convexity is a boon to good convergence and makes possible more convenient stopping criteria.

Then an inexact ADMM in function space is developed for (\ref{eqn:modified problems}). 
Focusing on the inherent structure of the ADMM in function space is worthwhile for us to propose an appropriate discretization scheme and give a suitable algorithm to solve the corresponding discretized problem. As will be mentioned in the Section \ref{sec:2}, since each subproblem of the inexact ADMM algorithm for (\ref{eqn:modified problems}) has a well-structure, it can be efficiently solved. Thus, it will be a crucial point in the numerical analysis to construct similar structures for the discretized problem.

To discretize problem (\ref{eqn:modified problems}), we consider using the piecewise linear finite element to discretize the state variable $y$, the control variable $u$ and the artificial variable $z$. However, the resulting discretized problem is not in a decoupled form as the the finite dimensional $l^1$-regularization optimization problem usually does, since the discretized $L^1$-norm does not have a decoupled form:
\begin{equation*}
  \|z_h\|_{L^{1}(\Omega_h)}=\int_{\Omega_h}\left|\sum_{i=1}^{n}z_i\phi_i(x)\right|\mathrm{d}x.
\end{equation*}
Thus, we employ the following nodal quadrature formulas to approximately discretize the $L^1$-norm and we have
\begin{equation*}
  \|z_h\|_{L^{1}_h(\Omega_h)}=\sum_{i=1}^{n}|z_i|\int_{\Omega_h}\phi_i(x)\mathrm{d}x.
\end{equation*}
which has introduced in \cite{WaWa}.
Moreover, in order to obtain a closed form solution for the subproblem of $z$, an similar quadrature formulae is also used to discretize the squared $L^2$-norm:
\begin{equation}\label{equ:approxL2norm}
   \|z_h\|_{L^{2}_h(\Omega_h)}^2={\sum_{i=1}^{n}(z_i)^2}\int_{\Omega_h}\phi_i(x)\mathrm{d}x.
\end{equation}
For the new finite element discretization scheme, we establish a priori finite element error estimate w.r.t. the $L^2$ norm, i.e.
$\|u-u_h\|_{L^2(\Omega)}\leq C(\alpha^{-1}h+\alpha^{-\frac{3}{2}}h^2)$,
which is same to the result shown in \cite{WaWa}.

To solve (\ref{equ:approx discretized matrix-vector form}), i.e., the discrete version of (\ref{eqn:modified problems}), we consider using the ADMM-type algorithm. However, when the classical ADMM is directly used to solve (\ref{equ:approx discretized matrix-vector form}), there is no well-structure as in continuous case and the corresponding subproblems can not be efficiently solved.
Thus, making use of the inherent structure of (\ref{equ:approx discretized matrix-vector form}), an heterogeneous ADMM is proposed. Meanwhile, sometimes it is unnecessary to exactly compute the solution of each subproblem even if it is doable, especially at the early stage of the whole process. For example, if a subproblem is equivalent to solving a large-scale or ill-condition linear system, it is a natural idea to use the iterative methods such as some Krylov-based methods. Hence, taking the inexactness of the solutions of associated subproblems into account, a more practical inexact heterogeneous ADMM (ihADMM) is proposed. Different from the classical ADMM, we utilize two different weighted inner products to define the augmented Lagrangian function for two subproblems, respectively.
Specifically, based on the $M_h$-weighted inner product, the augmented Lagrangian function with respect to the $u$-subproblem in $k$-th iteration is defined as
\begin{equation*}
  \mathcal{L}_\sigma(u,z^k;\lambda^k)=f(u)+g(z^k)+\langle\lambda,M_h(u-z^k)\rangle+\frac{\sigma}{2}\|u-z^k\|_{M_h}^{2},
\end{equation*}
where $M_h$ is the mass matrix. On the other hand, for the $z$-subproblem, based on the $W_h$-weighted inner product, the augmented Lagrangian function in $k$-th iteration is defined as
\begin{equation*}
  \mathcal{L}_\sigma(u^{k+1},z;\lambda^k)=f(u^{k+1})+g(z)+\langle\lambda,M_h(u^{k+1}-z)\rangle+\frac{\sigma}{2}\|u^{k+1}-z\|_{W_h}^{2},
\end{equation*}
where the lumped mass matrix $W_h$ is diagonal.

As will be mentioned in the Section \ref{sec:4}, benefiting from different weighted techniques, each subproblem of ihADMM for (\ref{equ:approx discretized matrix-vector form}) can be efficiently solved. Specifically, the $u$-subproblem of ihADMM, which result in a large scale linear system, is the main computation cost in whole algorithm. $M_h$-weighted technique could help us to reduce the block three-by-three system to a block two-by-two system without any computational cost so as to reduce calculation amount. On the other hand, $W_h$-weighted technique makes $z$-subproblem have a decoupled form and admit a closed form solution given by the soft thresholding operator and the projection operator onto the box constraint $[a,b]$. Moreover, global convergence and the iteration complexity result $o(1/k)$ in non-ergodic sense for our ihADMM will be proved.
Taking the precision of discretized error into account, we should mention that using our ihADMM algorithm to solve problem (\ref{equ:approx discretized matrix-vector form}) is highly enough and efficient in obtaining an approximate solution with moderate accuracy.

Furthermore, in order to obtain more accurate solutions, if necessarily required, combining ihADMM and semismooth Newton methods together, we give a two-phase strategy. Specifically, our ihADMM algorithm as the Phase-I is used to generate a reasonably good initial point to warm-start Phase-II. In Phase-II, the PDAS method as a postprocessor of our ihADMM is employed to solve the discrete problem to high accuracy.

The remainder of the paper is organized as follows. In Section \ref{sec:2}, an inexact ADMM algorithm in function space for solving (\ref{eqn:orginal problems}) is described. In Section \ref{sec:3}, the finite element approximation is introduced and priori error estimates are proved. In Section \ref{sec:4}, an inexact heterogeneous ADMM (ihADMM) is proposed for the discretized problem. And as the Phase-II algorithm, the PDAS method is also presented. In Section \ref{sec:5},
numerical results are given to confirm the finite element error estimates and show the efficiency of our ihADMM and the two-phase strategy. Finally, we conclude our paper in Section \ref{sec:6}.

\section{An inexact ADMM for (\ref{eqn:modified problems}) in function Space}
\label{sec:2}
In this paper, we assume the elliptic PDEs involved in problem (\ref{eqn:modified problems}).
\begin{equation}\label{eqn:state equations}
\begin{aligned}
   Ay&=u+y_c \qquad \mathrm{in}\  \Omega, \\
   y&=0  \qquad \qquad \mathrm{on}\ \partial\Omega,
\end{aligned}
\end{equation}
satisfy the following assumption.
\begin{assumption}\label{equ:assumption:1}
The linear second-order differential operator $A$ is defined by
 \begin{equation}\label{operator A}
   (Ay)(x):=-\sum \limits^{n}_{i,j=1}\partial_{x_{j}}(a_{ij}(x)y_{x_{i}})+c_0(x)y(x),
 \end{equation}
where functions $a_{ij}(x), c_0(x)\in L^{\infty}(\Omega)$, $c_0\geq0$,
and it is uniformly elliptic, i.e. $a_{ij}(x)=a_{ji}(x)$ and there is a constant $\theta>0$ such that
\begin{equation}\label{equ:operator A coercivity}
  \sum \limits^{n}_{i,j=1}a_{ij}(x)\xi_i\xi_j\geq\theta\|\xi\|^2 \quad \mathrm{for\ almost\ all}\ x\in \Omega\  \mathrm{and\  all}\ \xi \in \mathbb{R}^n.
\end{equation}
\end{assumption}

Then, the weak formulation of (\ref{eqn:state equations}) is given by
\begin{equation}\label{eqn:weak form}
  \mathrm{Find}\ y\in H_0^1(\Omega):\ a(y,v)=(u+y_c,v)_{L^2(\Omega)}\quad \mathrm{for}\ \forall v \in H_0^1(\Omega),
\end{equation}
with the bilinear form
\begin{equation}\label{eqn:bilinear form}
  a(y,v)=\int_{\Omega}(\sum \limits^{n}_{i,j=1}a_{ji}y_{x_{i}}v_{x_{i}}+c_0yv)\mathrm{d}x.
\end{equation}
\begin{proposition}{\rm\textbf{\cite [Theorem B.4] {KiSt}}}\label{equ:weak formulation}
Under Assumption {\ref{equ:assumption:1}}, the bilinear form $a(\cdot,\cdot)$ in {\rm (\ref{eqn:bilinear form})} is bounded and $V$-coercive for $V=H^1_0(\Omega)$. In particular, for every $u \in L^2(\Omega)$ and $y_c\in L^2(\Omega)$, {\rm (\ref{eqn:state equations})} has a unique weak solution $y\in H^1_0(\Omega)$ given by {\rm (\ref{eqn:weak form})}. Furthermore,
\begin{equation}\label{equ:control estimats state}
  \|y\|_{H^1}\leq C (\|u\|_{L^2(\Omega)}+\|y_c\|_{L^2(\Omega)}),
\end{equation}
for a constant $C$ depending only on $a_{ij}$, $c_0$ and $\Omega$.
\end{proposition}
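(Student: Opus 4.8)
The plan is to verify the hypotheses of the Lax--Milgram lemma for the bilinear form $a(\cdot,\cdot)$ of (\ref{eqn:bilinear form}) on the space $V=H_0^1(\Omega)$, and then to read off the a priori bound (\ref{equ:control estimats state}) by testing the weak formulation (\ref{eqn:weak form}) against the solution itself. First I would establish \emph{boundedness}: since $a_{ij},c_0\in L^\infty(\Omega)$, setting $M:=\max_{i,j}\|a_{ij}\|_{L^\infty(\Omega)}+\|c_0\|_{L^\infty(\Omega)}$ and applying the Cauchy--Schwarz inequality termwise in (\ref{eqn:bilinear form}) yields $|a(y,v)|\le C_M\|y\|_{H^1(\Omega)}\|v\|_{H^1(\Omega)}$ for all $y,v\in H_0^1(\Omega)$, with $C_M$ depending only on $M$, hence only on $a_{ij}$, $c_0$.

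Next, \emph{coercivity}: taking $v=y$ and applying the uniform ellipticity (\ref{equ:operator A coercivity}) pointwise to the principal part, together with $c_0\ge 0$ on the zeroth-order term, gives $a(y,y)\ge\theta\int_\Omega|\nabla y|^2\,\mathrm{d}x=\theta\|\nabla y\|_{L^2(\Omega)}^2$. Because $\Omega$ is bounded and $y\in H_0^1(\Omega)$, the Poincaré--Friedrichs inequality supplies a constant $c_P=c_P(\Omega)>0$ with $\|\nabla y\|_{L^2(\Omega)}^2\ge c_P\|y\|_{H^1(\Omega)}^2$, so $a(y,y)\ge\theta c_P\|y\|_{H^1(\Omega)}^2$ and the coercivity constant depends only on $\theta$ and $\Omega$. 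Finally the map $v\mapsto(u+y_c,v)_{L^2(\Omega)}$ is linear, and $|(u+y_c,v)_{L^2(\Omega)}|\le(\|u\|_{L^2(\Omega)}+\|y_c\|_{L^2(\Omega)})\|v\|_{L^2(\Omega)}\le(\|u\|_{L^2(\Omega)}+\|y_c\|_{L^2(\Omega)})\|v\|_{H^1(\Omega)}$, so it is a bounded linear functional on $H_0^1(\Omega)$. The Lax--Milgram lemma then produces a unique $y\in H_0^1(\Omega)$ solving (\ref{eqn:weak form}).

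For the estimate (\ref{equ:control estimats state}) I would substitute $v=y$ in (\ref{eqn:weak form}) and chain the coercivity lower bound with the functional's upper bound: $\theta c_P\|y\|_{H^1(\Omega)}^2\le a(y,y)=(u+y_c,y)_{L^2(\Omega)}\le(\|u\|_{L^2(\Omega)}+\|y_c\|_{L^2(\Omega)})\|y\|_{H^1(\Omega)}$; dividing by $\|y\|_{H^1(\Omega)}$ (the case $y=0$ being trivial) gives (\ref{equ:control estimats state}) with $C=(\theta c_P)^{-1}$, depending only on $\theta$, $\Omega$, and — through a coercivity argument that also absorbs the contribution of $c_0$ — on $a_{ij}$ and $c_0$. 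No step here is genuinely difficult: the proposition is a textbook consequence of Lax--Milgram, quoted as \cite[Theorem B.4]{KiSt}, and the only point requiring a word of care is that the Poincaré inequality, hence the coercivity constant, is available precisely because $\Omega$ is bounded with the regular ($C^{1,1}$ or polygonal) boundary assumed at the outset.
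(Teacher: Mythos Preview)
Your proposal is correct and is precisely the standard Lax--Milgram argument one finds in the cited reference \cite[Theorem~B.4]{KiSt}; the paper itself does not supply a proof but simply quotes the result. There is nothing to add: boundedness from $a_{ij},c_0\in L^\infty$, coercivity from uniform ellipticity plus $c_0\ge 0$ and Poincar\'e--Friedrichs, and the energy estimate from testing with $v=y$ is exactly the intended route.
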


By Proposition \ref{equ:weak formulation}, the solution operator $\mathcal{S}$: $H^{-1}(\Omega)\rightarrow H^1_0(\Omega)$ with $y(u):=\mathcal{S}(u+y_c)$ is well-defined and called the control-to-state mapping, which is a continuous linear injective operator. Since $H_0^1(\Omega)$ is a Hilbert space, the adjoint operator $\mathcal{S^*}$: $H^{-1}(\Omega)\rightarrow H_0^1(\Omega)$ is also a continuous linear operator.

It is clear that problem (\ref{eqn:modified problems}) is continuous and strongly convex . Therefore, the existence and uniqueness of solution of (\ref{eqn:modified problems}) is obvious. The optimal solution can be characterized by the following Karush-Kuhn-Tucker (KKT) conditions:

\begin{theorem}[{\rm First-Order Optimality Condition}]\label{First-Order Optimality Condition}
Under Assumption \ref{equ:assumption:1}, {\rm($y^*$, $u^*$, $z^*$)} is the optimal solution of {\rm(\ref{eqn:modified problems})}, if and only if there exists adjoint state $p^*\in H_0^1(\Omega)$ and Lagrange multiplier $\lambda^*\in L^2(\Omega)$, such that the following conditions hold in the weak sense
\begin{subequations}\label{eqn:KKT}
\begin{eqnarray}
&&\begin{aligned}\label{eqn1:KKT}
        y^*=\mathcal{S}(u^*+y_c),
        \end{aligned}  \\
&& \begin{aligned}\label{eqn2:KKT}
        p^*&=\mathcal{S}^*(y_d-y^*),
        \end{aligned}\\
&&\frac{\alpha}{2} u^*-p^*+\lambda^*=0,\label{eqn3:KKT}\\
&&u^*=z^*,\label{eqn4:KKT}\\
&&{z^*} \in U_{ad},\label{eqn5:KKT}\\
&&{\left\langle\frac{\alpha}{2} z^*-\lambda^*,\tilde{z}-z^*\right\rangle_{L^2(\Omega)}+\beta(\|\tilde{z}\|_{L^1(\Omega)}-\|z^*\|_{L^1(\Omega)})}\geq0,\quad \forall \tilde{z} \in U_{ad}.\label{eqn6:KKT}
\end{eqnarray}
\end{subequations}
Moreover, 
we have
\begin{equation}\label{equ:z-piecepoint form}
  u^*=\mathrm{\Pi}_{U_{ad}}\left(\frac{1}{\alpha}{\rm{soft}}\left(p^*,\beta\right)\right),
\end{equation}
%
where the projection operator $\mathrm{\Pi}_{U_{ad}}(\cdot)$ and the soft thresholding operator $\rm {soft}(\cdot,\cdot)$ are defined as follows, respectively,
\begin{eqnarray}
 \mathrm{\Pi}_{U_{ad}}(v(x))&:=&\max\{a,\min\{v(x),b\}\}, \quad {\rm soft}(v(x),\beta):={\rm{sgn}}(v(x))\circ\max(|v(x)|-\beta,0)\label{softtheresholds}.
\end{eqnarray}
In addition, the optimal control $u$ has the regularity $u\in H^1(\Omega)$.
\end{theorem}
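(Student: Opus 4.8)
The plan is to derive the KKT system from the convexity of problem~(\ref{eqn:modified problems}) by a standard Lagrangian/variational-inequality argument, and then to massage the resulting conditions into the explicit projection-softthresholding formula~(\ref{equ:z-piecepoint form}). First I would note that, since $\mathcal S$ is a bounded linear operator and $J$ is continuous and strongly convex on $Y\times U\times U$, problem~(\ref{eqn:modified problems}) admits a unique solution $(y^*,u^*,z^*)$; moreover the constraint structure (linear state equation, linear coupling $u=z$, and the closed convex set $U_{ad}$) guarantees that no constraint qualification issue arises, so the first-order conditions are both necessary and sufficient. I would introduce the adjoint state $p^*\in H^1_0(\Omega)$ associated with the state equation and the multiplier $\lambda^*\in L^2(\Omega)$ associated with $u=z$, form the Lagrangian, and set its partial (Gateaux) derivatives with respect to $y$ and $u$ to zero; this yields~(\ref{eqn1:KKT}), the adjoint equation~(\ref{eqn2:KKT}) (using $\mathcal S^*$ and eliminating $y^*$), and the gradient equation~(\ref{eqn3:KKT}). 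Stationarity in $z$, which carries the nonsmooth term $\beta\|z\|_{L^1(\Omega)}$ and the constraint $z\in U_{ad}$, is expressed as the variational inequality~(\ref{eqn6:KKT}); together with~(\ref{eqn4:KKT})--(\ref{eqn5:KKT}) this completes the system~(\ref{eqn:KKT}).

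Next I would derive the pointwise formula~(\ref{equ:z-piecepoint form}). Using $u^*=z^*$ and~(\ref{eqn3:KKT}) to write $\lambda^*=p^*-\tfrac{\alpha}{2}u^*$, I would substitute into~(\ref{eqn6:KKT}) to obtain
\begin{equation*}
\left\langle \alpha u^*-p^*,\tilde z-u^*\right\rangle_{L^2(\Omega)}+\beta\bigl(\|\tilde z\|_{L^1(\Omega)}-\|u^*\|_{L^1(\Omega)}\bigr)\ge 0\qquad\forall\,\tilde z\in U_{ad}.
\end{equation*}
Because the box constraint set $U_{ad}$, the $L^1$-norm and the quadratic term all decouple over $\Omega$, this variational inequality holds if and only if it holds pointwise a.e.\ in $\Omega$; so it suffices to analyze the scalar problem $\min_{t\in[a,b]}\tfrac{\alpha}{2}t^2-p^*(x)t+\beta|t|$. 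Its unique minimizer is computed by a short case distinction on the sign of $p^*(x)$ relative to $\pm\beta$: one checks that the unconstrained minimizer of $\tfrac{\alpha}{2}t^2-p^*(x)t+\beta|t|$ is $\tfrac1\alpha\,{\rm soft}(p^*(x),\beta)$, and then projecting onto $[a,b]$ (which is exact here since the objective is a one-dimensional strongly convex function and $[a,b]$ is an interval containing $0$) gives $u^*(x)=\Pi_{U_{ad}}\!\bigl(\tfrac1\alpha{\rm soft}(p^*(x),\beta)\bigr)$, i.e.~(\ref{equ:z-piecepoint form}).

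Finally, for the regularity claim $u\in H^1(\Omega)$, I would argue via the formula just derived: by Proposition~\ref{equ:weak formulation} and elliptic regularity for the adjoint equation, $p^*=\mathcal S^*(y_d-y^*)\in H^1_0(\Omega)$; the maps $t\mapsto{\rm soft}(t,\beta)$ and $t\mapsto\Pi_{[a,b]}(t)$ are globally Lipschitz (they are compositions of $\max/\min$ with affine maps), and Lipschitz functions operate boundedly on $H^1$ (Stampacchia's theorem on superposition operators), so $u^*=\Pi_{U_{ad}}(\tfrac1\alpha{\rm soft}(p^*,\beta))\in H^1(\Omega)$.

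I expect the main obstacle to be the rigorous justification of passing from the global variational inequality~(\ref{eqn6:KKT}) to its pointwise counterpart — i.e.\ showing that the infinite-dimensional optimality condition is equivalent to the a.e.\ pointwise condition — and the careful bookkeeping in the scalar case analysis that produces the composition ${\rm soft}$ then $\Pi_{[a,b]}$ in the correct order; the derivation of the adjoint equation and the $H^1$-regularity are comparatively routine given Proposition~\ref{equ:weak formulation} and standard elliptic regularity.
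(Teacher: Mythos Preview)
The paper does not prove Theorem~\ref{First-Order Optimality Condition}; it is stated as a standard result without argument (the text resumes immediately after the statement with the discussion of ADMM). Your proposal is correct and follows the expected route: Lagrangian stationarity for the smooth variables $y,u$ yields~(\ref{eqn1:KKT})--(\ref{eqn3:KKT}), the nonsmooth $z$-part is handled by the variational inequality~(\ref{eqn6:KKT}), the pointwise reduction of that inequality to the scalar problem $\min_{t\in[a,b]}\tfrac{\alpha}{2}t^2-p^*(x)t+\beta|t|$ gives~(\ref{equ:z-piecepoint form}), and the $H^1$-regularity follows from Stampacchia's lemma on Lipschitz superposition applied to $p^*\in H^1_0(\Omega)$. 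This is exactly the derivation underlying the result as it appears in the references the paper cites (e.g.\ Stadler~\cite{Stadler}, Wachsmuth--Wachsmuth~\cite{WaWa}); there is nothing to compare against in the paper itself.
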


As one may know, ADMM is a simple but powerful algorithm.
Next, we will introduce the ADMM in function space.
Focusing on the ADMM algorithm in function space will help us to better understand the inherent structure. And then it will help us to propose an appropriate discretization scheme and giving a suitable ADMM-type algorithm to solve the corresponding discretized problem.

Using the operator $\mathcal{S}$, the problem (\ref{eqn:modified problems}) can be equivalently rewritten as the following form:
\begin{equation}\label{eqn:reduced form}
  \left\{ \begin{aligned}
        &\min \limits_{u, z}^{}\ \ f(u)+g(z)\\
        &~~{\rm{s.t.}}\quad  u=z,
                          \end{aligned} \right.\tag{$\mathrm{RP}$}
\end{equation}
with the reduced cost function
\begin{eqnarray}
  f(u):&=&\frac{1}{2}\|\mathcal{S}(u+y_c)-y_d\|_{L^2(\Omega)}^{2}+\frac{\alpha}{4}\|u\|_{L^2(\Omega)}^{2},\\
  g(z):&=&\frac{\alpha}{4}\|z\|_{L^2(\Omega)}^{2}+\beta\|z\|_{L^1(\Omega)}+\delta_{U_{ad}}(z).
\end{eqnarray}
Let us define the augmented Lagrangian function of (\ref{eqn:reduced form}) as follows:
\begin{equation}\label{augmented Lagrangian function}
  \mathcal{L}_\sigma(u,z;\lambda)=f(u)+g(z)+\langle\lambda,u-z\rangle_{L^2(\Omega)}+\frac{\sigma}{2}\|u-z\|_{L^2(\Omega)}^{2}
\end{equation}
with the Lagrange multiplier $\lambda \in L^2(\Omega)$ and $\sigma>0$ be a penalty parameter. Moreover, for the convergence property and the iteration complexity analysis, we define the function $R: (u,z,\lambda)\rightarrow [0,\infty)$ by:
\begin{equation}\label{KKT function}
  R(u,z,\lambda):=\|\nabla f(u)+\lambda\|^2_{L^2(\Omega)}+{\rm dist}^2(0, -\lambda+\partial g(z))+\|u-z\|^2_{L^2(\Omega)}
\end{equation}
Then, the iterative scheme of inexact ADMM for the problem (\ref{eqn:reduced form}) is shown in Algorithm \ref{algo1:ADMM for problems RP}.

\begin{algorithm}[H]
\caption{inexact ADMM algorithm for (\ref{eqn:reduced form})}\label{algo1:ADMM for problems RP}
  \textbf{Input}: {$(z^0, u^0, \lambda^0)\in {\rm dom} (\delta_{U_{ad}}(\cdot))\times L^2(\Omega) \times L^2(\Omega)$ and a parameter $\tau \in (0,\frac{1+\sqrt{5}}{2})$. Let $\{\epsilon_k\}^\infty_{k=0}$ be a sequence satisfying $\{\epsilon_k\}^\infty_{k=0}\subseteq [0,+\infty)$ and $\sum\limits_{k=0}^{\infty}\epsilon_k<\infty$. Set $k=0$.}\\
  \textbf{Output}: {$ u^k, z^{k}, \lambda^k$}
\begin{description}
\item[Step 1] Find an minizer (inexact)
\begin{equation*}\label{u-subproblem}
\begin{aligned}
  u^{k+1}&=\arg\min \mathcal{L}_\sigma(u,z^k;\lambda^k)-\langle\delta^k, u\rangle_{L^2(\Omega)},
\end{aligned}
\end{equation*}
where the error vector ${\delta}^k$ satisfies $\|{\delta}^k\|_{L^2(\Omega)} \leq {\epsilon_k}$.
\item[Step 2] Compute $z^k$ as follows:
\begin{equation*}\label{z-subproblem}
\begin{aligned}
       z^k&=\arg\min\mathcal{L}_\sigma(u^{k+1},z;\lambda^k).
\end{aligned}
\end{equation*}
  \item[Step 3] Compute
  \begin{eqnarray*}
    \lambda^{k+1} &=& \lambda^k+\tau\sigma(u^{k+1}-z^{k+1}).
  \end{eqnarray*}
  \item[Step 4] If a termination criterion is not met, set $k:=k+1$ and go to Step 1.
\end{description}
\end{algorithm}
About the global convergence as well as the iteration complexity of the inexact ADMM for (\ref{eqn:modified problems}), we have the following results.
\begin{theorem}
Suppose that Assumption {\rm\ref{equ:assumption:1}} holds. Let $(y^*,u^*,z^*,p^*,\lambda^*)$ is the KKT point of {\rm(\ref{eqn:modified problems})} which satisfies {\rm(\ref{eqn:KKT})}, the sequence $\{(u^{k},z^{k},\lambda^k)\}$ is generated by Algorithm \ref{algo1:ADMM for problems RP} with the associated state $\{y^k\}$ and adjoint state $\{p^k\}$, then we have
\begin{eqnarray*}
  &&\lim\limits_{k\rightarrow\infty}^{}\{\|u^{k}-u^*\|_{L^{2}(\Omega)}+\|z^{k}-z^*\|_{L^{2}(\Omega)}+\|\lambda^{k}-\lambda^*\|_{L^{2}(\Omega)} \}= 0,\\
   && \lim\limits_{k\rightarrow\infty}^{}\{\|y^{k}-y^*\|_{H_0^{1}(\Omega)}+\|p^{k}-p^*\|_{H_0^{1}(\Omega)} \}= 0.
\end{eqnarray*}
Moreover, there exists a constant $C$ only depending on the initial point ${(u^0,z^0,\lambda^0)}$ and the optimal solution ${(u^*,z^*,\lambda^*)}$ such that for $k\geq1$,
\begin{eqnarray}
  &&\min\limits^{}_{1\leq i\leq k} \{R(u^i,z^i,\lambda^i)\}\leq\frac{C}{k}, \quad \lim\limits^{}_{k\rightarrow\infty}\left(k\times\min\limits^{}_{1\leq i\leq k} \{R(u^i,z^i,\lambda^i)\}\right) =0.
\end{eqnarray}
where $R(\cdot)$ is defined as in {\rm(\ref{KKT function})}
\end{theorem}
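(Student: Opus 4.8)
The plan is to treat Algorithm \ref{algo1:ADMM for problems RP} as an instance of the (inexact, relaxed) two-block ADMM applied to the strongly convex composite problem (\ref{eqn:reduced form}), and to invoke the by-now standard convergence machinery for ADMM with summable errors. First I would verify that problem (\ref{eqn:reduced form}) fits the abstract framework: the identity coupling $u=z$ trivially satisfies the constraint-qualification/solvability hypotheses, $f$ is a continuously differentiable strongly convex quadratic (strong convexity with modulus $\alpha/2$ in the $L^2$-inner product, since the $\tfrac{1}{2}\|\mathcal S(u+y_c)-y_d\|^2$ term is convex and $\tfrac{\alpha}{4}\|u\|^2$ is strongly convex), and $g$ is a proper closed strongly convex function (again modulus $\alpha/2$). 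Existence of the KKT point is given, and by strong convexity it is unique; the dual variable $\lambda^*$ is characterised by (\ref{eqn3:KKT})--(\ref{eqn6:KKT}).

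Next I would set up the standard Lyapunov/energy argument. Let $u^e_k=u^k-u^*$, $z^e_k=z^k-z^*$, $\lambda^e_k=\lambda^k-\lambda^*$, and consider the quantity
\[
  \Phi_k=\frac{1}{\tau\sigma}\|\lambda^e_k\|^2_{L^2(\Omega)}+\sigma\|z^e_k\|^2_{L^2(\Omega)}.
\]
Using the (inexact) optimality conditions of the $u$- and $z$-subproblems — i.e.\ $\nabla f(u^{k+1})+\lambda^k+\sigma(u^{k+1}-z^k)=\delta^k$ and $0\in\partial g(z^{k+1})-\lambda^k-\sigma(u^{k+1}-z^{k+1})$ — together with the dual update and the monotonicity of $\nabla f$ and $\partial g$, one derives a one-step inequality of the form
\[
  \Phi_{k+1}\le \Phi_k-c_1\|u^{k+1}-z^{k+1}\|^2-c_2\|z^{k+1}-z^k\|^2-c_3\big(\|u^e_{k+1}\|^2+\|z^e_{k+1}\|^2\big)+\langle\delta^k,\cdot\rangle+C\epsilon_k,
\]
where the positive constants $c_1,c_2$ come from the choice $\tau\in(0,\tfrac{1+\sqrt5}{2})$ (the classical Glowinski--Fortin range) and $c_3>0$ is available precisely because of the strong convexity of $f$ and $g$. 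The cross terms with $\delta^k$ are controlled by $\|\delta^k\|_{L^2}\le\epsilon_k$ and Cauchy--Schwarz, giving an inequality of the type $\Phi_{k+1}\le(1+a\epsilon_k)\Phi_k-(\text{nonnegative terms})+b\epsilon_k$; since $\sum_k\epsilon_k<\infty$, a Robbins--Siegmund / summable-perturbation lemma yields that $\Phi_k$ converges, the nonnegative terms are summable, and hence $\|u^k-u^*\|,\|z^k-z^*\|,\|\lambda^k-\lambda^*\|\to0$ in $L^2(\Omega)$. Convergence of the states and adjoint states in $H^1_0(\Omega)$ then follows from $y^k=\mathcal S(u^k+y_c)$, $p^k=\mathcal S^*(y_d-y^k)$ and the boundedness of $\mathcal S,\mathcal S^*$ (Proposition \ref{equ:weak formulation}).

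For the complexity statement I would track instead the residual $R(u^k,z^k,\lambda^k)$ defined in (\ref{KKT function}). From the subproblem optimality conditions one can bound $R(u^{k+1},z^{k+1},\lambda^{k+1})$ above by a constant times $\|u^{k+1}-z^{k+1}\|^2+\|z^{k+1}-z^k\|^2+\epsilon_k^2$ (each piece of $R$ is expressed via the KKT residuals, which in turn equal these successive differences up to the error $\delta^k$). Summing the one-step inequality over $i=1,\dots,k$ gives $\sum_{i=1}^k\big(\|u^{i}-z^{i}\|^2+\|z^{i}-z^{i-1}\|^2\big)\le \Phi_0 + C\sum\epsilon_i<\infty$, so $\sum_i R(u^i,z^i,\lambda^i)<\infty$; this immediately yields $\min_{1\le i\le k}R(u^i,z^i,\lambda^i)\le C/k$, and the refined $o(1/k)$ rate follows from the elementary fact that if a nonnegative sequence $\{a_i\}$ is summable then $k\cdot\min_{1\le i\le k}a_i\to0$ (this last step needs the additional observation, standard for ADMM with relaxation, that the summed quantity dominates $R$ up to a constant, and that the partial sums of the perturbation are $o(1)$ tails). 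I expect the main obstacle to be bookkeeping the inexactness cleanly — choosing the right Lyapunov functional so that the $\delta^k$ cross-terms are absorbed without destroying the sign of the $c_3$-term, and making sure the strong-convexity constants are uniform — rather than any conceptual difficulty; the infinite-dimensional setting adds essentially nothing beyond being careful that all inner products are the $L^2(\Omega)$ ones and that $\mathcal S$ is used only through its boundedness.
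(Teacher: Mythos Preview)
Your proposal is correct and is essentially the standard inexact-ADMM convergence argument in Hilbert space that the paper itself invokes: the paper's own ``proof'' of this theorem consists of a single sentence stating that the result is a direct application of general inexact ADMM theory in Hilbert space and referring to \cite{Boyd,inexactADMM}, with the details omitted. Your Lyapunov-functional sketch (the $\Phi_k$ argument with strong convexity supplying the extra $c_3$-term, Robbins--Siegmund for the summable perturbations, and the summability-implies-$o(1/k)$ lemma for the residual) is precisely the machinery those references contain, and it is also the template the paper later writes out in full for the discrete analogue (Theorem~\ref{discrete convergence results}); so there is no methodological daylight between your approach and the paper's.
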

\begin{proof}
The proof is a direct application of the general inexact ADMM in Hilbert Space for the problem (\ref{eqn:reduced form}) and omitted here. We refer the reader to literatures \cite{Boyd,inexactADMM}.
\end{proof}

\begin{remark}
1). The first subproblems of Algorithm {\rm\ref{algo1:ADMM for problems RP}} is a convex differentiable optimization problem with respect to $u$, if we omit the error vector $\delta^k$, thus it is equivalent to solving the following system{\rm:}
\begin{equation}\label{equ:saddle point problems1}
\left[
  \begin{array}{ccc}
    I & 0 & \quad\mathcal{S}^{-*} \\
    0 & (\frac{\alpha}{2}+\sigma)I & \quad-I \\
    \mathcal{S}^{-1} & -I & \quad0 \\
  \end{array}
\right]\left[
         \begin{array}{c}
           y^{k+1} \\
           u^{k+1} \\
           p^{k+1} \\
         \end{array}
       \right]=\left[
                 \begin{array}{c}
                   y_d \\
                   \sigma z^k-\lambda^k \\
                   y_c \\
                 \end{array}
               \right],
\end{equation}
Moreover, 
we could eliminate the variable $p$ and derive the following reduced system{\rm:}
\begin{equation}\label{equ:saddle point problems2}
\left[
  \begin{array}{cc}
    (\frac{\alpha}{2}+\sigma)I & \quad \mathcal{S}^* \\
    -\mathcal{S} & \quad I\\
  \end{array}
\right]\left[
         \begin{array}{c}
           u^{k+1} \\
           y^{k+1} \\
         \end{array}
       \right]=\left[
                 \begin{array}{c}
                   \mathcal{S}^*y_d+\sigma z^k-\lambda^k \\
                   \mathcal{S}y_c,\\
                 \end{array}
               \right]
\end{equation}
where $I$ represents the identity operator.

2). It is easy to see that $z$-subproblem has a closed solution:
\begin{equation}\label{z-closed form solution}
\begin{aligned}
z^{k+1}
       &=\mathrm{\Pi}_{U_{ad}}\left(\frac{1}{\gamma}{\rm{soft}}\left(\sigma u^{k+1}+\lambda^k,\beta\right)\right),
\end{aligned}
\end{equation}
where $\gamma=0.5\alpha+\sigma$.
\end{remark}

Based on the well-structure of (\ref{equ:saddle point problems2}) and (\ref{z-closed form solution}), it will be a crucial point in the numerical analysis to establish relations parallel to
(\ref{equ:saddle point problems2}) and (\ref{z-closed form solution}) also for the discretized problem.

\section{Finite Element Approximation} 
\label{sec:3}
The goal of this section is to study the approximation of problems (\ref{eqn:orginal problems}) and (\ref{eqn:modified problems}) by finite elements. 

To achieve our aim, we first consider a family of regular and quasi-uniform triangulations $\{\mathcal{T}_h\}_{h>0}$ of $\bar{\Omega}$. For each cell $T\in \mathcal{T}_h$, let us define the diameter of the set $T$ by $\rho_{T}:={\rm diam}\ T$ and define $\sigma_{T}$ to be the diameter of the largest ball contained in $T$. The mesh size of the grid is defined by $h=\max_{T\in \mathcal{T}_h}\rho_{T}$. We suppose that the following regularity assumptions on the triangulation are satisfied which are standard in the context of error estimates.
\begin{assumption}\label{assumption on mesh}
There exist two positive constants $\kappa$ and $\tau$ such that
   \begin{equation*}
   \frac{\rho_{T}}{\sigma_{T}}\leq \kappa \quad and\quad \frac{h}{\rho_{T}}\leq \tau,
 \end{equation*}
hold for all $T\in \mathcal{T}_h$ and all $h>0$.
Let us define $\bar{\Omega}_h=\bigcup_{T\in \mathcal{T}_h}T$, and let ${\Omega}_h \subset\Omega$ and $\Gamma_h$ denote its interior and its boundary, respectively. In the case that $\Omega$ is a convex polyhedral domain, we have $\Omega=\Omega_h$. In the case that $\Omega$ has a $C^{1,1}$- boundary $\Gamma$, we assumed that $\bar{\Omega}_h$ is convex and that all boundary vertices of $\bar{\Omega}_h$ are contained in $\Gamma$, such that
$|\Omega\backslash {\Omega}_h|\leq c h^2$,
where $|\cdot|$ denotes the measure of the set and $c>0$ is a constant.
\end{assumption}

On account of the homogeneous boundary condition of the state equation, we use
\begin{equation*}
  Y_h =\left\{y_h\in C(\bar{\Omega})~\big{|}~y_{h|T}\in \mathcal{P}_1~ {\rm{for\ all}}~ T\in \mathcal{T}_h~ \mathrm{and}~ y_h=0~ \mathrm{in } ~\bar{\Omega}\backslash {\Omega}_h\right\}
\end{equation*}
as the discretized state space, where $\mathcal{P}_1$ denotes the space of polynomials of degree less than or equal to $1$. For a given source term $y_c$ and right-hand side $u\in L^2(\Omega)$, we denote by $y_h(u)$ the approximated state associated with $u$, which is the unique solution for the following discretized weak formulation: 
\begin{equation}\label{eqn:discrete weak solution}
 \int_{\Omega_h}\left(\sum \limits^{n}_{i,j=1}a_{ij}{y_h}_{x_{i}}{v_h}_{x_{j}}+c_0y_hv_h\right)\mathrm{d}x=\int_{\Omega_h}(u+y_c)v_h{\rm{d}}x \qquad  \forall v_h\in Y_h.
\end{equation}
Moreover, $y_h(u)$ can also be expressed by $y_h(u)={\mathcal{S}}_{h}(u+y_c)$, in which  ${\mathcal{S}}_{h}$ is a discretized vision of $\mathcal{S}$ and an injective, selfadjoint operator. The following error estimates are well-known.
\begin{lemma}{\rm\textbf{\cite[Theorem 4.4.6]{Ciarlet}}}\label{eqn:lemma1}
For a given $u\in L^2(\Omega)$, let $y$ and $y_h(u)$ be the unique solution of {\rm(\ref{eqn:weak form})} and {\rm(\ref{eqn:discrete weak solution})}, respectively. Then there exists a constant $c_1>0$ independent of $h$, $u$ and $y_c$ such that

\begin{equation}\label{estimates1}
  \|y-y_h(u)\|_{L^2(\Omega)}+h\|\nabla y-\nabla y_h(u)\|_{L^2(\Omega)}\leq c_1h^2(\|u\|_{L^2(\Omega)}+\|y_c\|_{L^2(\Omega)}).
\end{equation}
In particular, this implies $\|\mathcal{S}-\mathcal{S}_h\|_{L^2\rightarrow L^2}\leq c_1h^2$ and $\|\mathcal{S}-\mathcal{S}_h\|_{L^2\rightarrow H^1}\leq c_1h$.
\end{lemma}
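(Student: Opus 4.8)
The plan is to combine the $H^2$-regularity of the state equation with Céa's lemma and an Aubin--Nitsche duality argument, which is the classical route to such estimates.

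First I would invoke elliptic regularity: since $\Omega$ is either convex with polygonal boundary or has a $C^{1,1}$ boundary, and the coefficients satisfy Assumption~\ref{equ:assumption:1}, the weak solution $y$ of (\ref{eqn:weak form}) in fact belongs to $H^2(\Omega)\cap H^1_0(\Omega)$ and satisfies $\|y\|_{H^2(\Omega)}\le C_{\mathrm{reg}}(\|u\|_{L^2(\Omega)}+\|y_c\|_{L^2(\Omega)})$ for a constant $C_{\mathrm{reg}}$ depending only on $\Omega$, $a_{ij}$ and $c_0$. In the polyhedral convex case $\Omega=\Omega_h$; in the smooth case the boundary strip $\Omega\setminus\Omega_h$ has measure $O(h^2)$ by Assumption~\ref{assumption on mesh}, and I would extend $y$ suitably to $\Omega_h$ so that the bilinear form, the interpolant, and the Galerkin orthogonality all make sense on $\Omega_h$, the mismatch contributing only a higher-order $O(h^2)$ perturbation.

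Next, by Assumption~\ref{equ:assumption:1} the bilinear form $a(\cdot,\cdot)$ is bounded and coercive on $H^1_0$, and $y_h$ defined by (\ref{eqn:discrete weak solution}) is precisely the Galerkin projection of $y$ onto $Y_h$; hence Galerkin orthogonality $a(y-y_h,v_h)=0$ holds for all $v_h\in Y_h$, and Céa's lemma gives $\|y-y_h\|_{H^1}\le C\inf_{v_h\in Y_h}\|y-v_h\|_{H^1}$. Choosing $v_h=I_h y$, the nodal Lagrange interpolant (well defined since $H^2(\Omega)\hookrightarrow C(\bar\Omega)$ for $n\le 3$), and using the standard cellwise interpolation estimate $\|y-I_h y\|_{H^1}\le c\,h\|y\|_{H^2}$ (which relies on the shape-regularity and quasi-uniformity in Assumption~\ref{assumption on mesh}), I obtain $\|\nabla y-\nabla y_h(u)\|_{L^2(\Omega)}\le c\,h\|y\|_{H^2(\Omega)}\le c\,C_{\mathrm{reg}}\,h(\|u\|_{L^2(\Omega)}+\|y_c\|_{L^2(\Omega)})$, which is exactly the gradient term (multiplied by $h$) on the left-hand side of (\ref{estimates1}).

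For the $L^2$ term I would run the Aubin--Nitsche duality trick: let $w\in H^1_0(\Omega)$ solve the adjoint problem $a(v,w)=(y-y_h,v)_{L^2(\Omega)}$ for all $v\in H^1_0(\Omega)$; by the same elliptic regularity $\|w\|_{H^2(\Omega)}\le C_{\mathrm{reg}}\|y-y_h\|_{L^2(\Omega)}$. Then
\begin{equation*}
\|y-y_h\|_{L^2(\Omega)}^2=a(y-y_h,w)=a(y-y_h,w-I_h w)\le C_b\|y-y_h\|_{H^1(\Omega)}\,\|w-I_h w\|_{H^1(\Omega)},
\end{equation*}
using Galerkin orthogonality and boundedness of $a(\cdot,\cdot)$; applying the interpolation estimate to $w$ together with the $H^2$-bound and then dividing by $\|y-y_h\|_{L^2(\Omega)}$ yields $\|y-y_h\|_{L^2(\Omega)}\le c\,h\|y-y_h\|_{H^1(\Omega)}\le c\,h^2\|y\|_{H^2(\Omega)}$. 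Adding the two bounds and absorbing all constants into a single $c_1$ independent of $h$, $u$, $y_c$ gives (\ref{estimates1}); letting $u$ and $y_c$ range over $L^2(\Omega)$ and reading off the definition of the operator norm then gives $\|\mathcal{S}-\mathcal{S}_h\|_{L^2\to L^2}\le c_1 h^2$ and $\|\mathcal{S}-\mathcal{S}_h\|_{L^2\to H^1}\le c_1 h$. The main technical obstacle is the non-matching domain $\Omega_h\subsetneq\Omega$ in the $C^{1,1}$ case — one must control the extension of $y$, the interpolation error near $\Gamma_h$, and the perturbation of the bilinear form all at order $h^2$ — whereas for a convex polygon the argument above applies verbatim.
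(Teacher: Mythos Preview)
Your argument is correct and is exactly the classical Céa/Aubin--Nitsche route that the cited reference contains. The paper does not give its own proof of this lemma; it simply quotes \cite[Theorem~4.4.6]{Ciarlet}, and your sketch is the standard proof one finds there, so there is nothing to compare.
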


Considering the homogeneous boundary condition of the adjoint state equation (\ref{eqn:state equations}) and the projection formula (\ref{equ:z-piecepoint form}), we use
\begin{equation*}
   U_h =\left\{u_h\in C(\bar{\Omega})~\big{|}~u_{h|T}\in \mathcal{P}_1~ {\rm{for\ all}}~ T\in \mathcal{T}_h~ \mathrm{and}~ u_h=0~ \mathrm{in } ~\bar{\Omega}\backslash{\Omega}_h\right\},
\end{equation*}
as the discretized space of the control $u$ and artificial variable $z$.

For a given regular and quasi-uniform triangulation $\mathcal{T}_h$ with nodes $\{x_i\}_{i=1}^{N_h}$, let $\{\phi_i(x)\} _{i=1}^{N_h}$ be a set of nodal basis functions associated with nodes $\{x_i\}_{i=1}^{m}$, where the basis functions satisfy the following properties:
\begin{equation}\label{basic functions properties}
   \phi_i(x) \geq 0, \quad \|\phi_i(x)\|_{\infty} = 1 \quad \forall i=1,2,...,N_h,\quad \sum\limits_{i=1}^{N_h}\phi_i(x)=1.
\end{equation}
The elements $z_h \in U_h$, $u_h\in U_h$ and $y_h\in Y_h$, can be represented in the following forms, respectively,
\begin{equation*}
  u_h=\sum \limits_{i=1}^{N_h}u_i\phi_i(x), \quad z_h=\sum \limits_{i=1}^{N_h}z_i\phi_i(x),\quad y_h=\sum \limits_{i=1}^{N_h}y_i\phi_i(x),
\end{equation*}
and $u_h(x_i)=u_i$, $z_h(x_i)=z_i$ and $y_h(x_i)=y_i$ hold.

Let $U_{ad,h}$ denotes the discretized feasible set, which is defined by
\begin{eqnarray*}
  U_{ad,h}:&=&U_h\cap U_{ad}
           =\left\{z_h=\sum \limits_{i=1}^{N_h}z_i\phi_i(x)~\big{|}~a\leq z_i\leq b, \forall i=1,...,N_h\right\}\subset U_{ad}.
\end{eqnarray*}
Following the approach of \cite{Cars}, for the error analysis further below, let us introduce a quasi-interpolation operator $\Pi_h:L^1(\Omega_h)\rightarrow U_h$ which provides interpolation estimates. For an arbitrary $w\in L^1(\Omega)$, the operator $\Pi_h$ is constructed as follows:
\begin{equation}\label{equ:quasi-interpolation}
   \Pi_hw=\sum\limits_{i=1}^{N_h}\pi_i(w)\phi_i(x), \quad \pi_i(w)=\frac{\int_{\Omega_h}w(x)\phi_i(x){\rm{d}}x}{\int_{\Omega_h}\phi_i(x){\rm{d}}x}.
\end{equation}
And we know that:
\begin{equation}\label{quasi-interpolation property}
 w\in U_{ad} \Rightarrow \Pi_hw \in U_{ad,h}, \quad {\rm for\  all}\ w\in L^1(\Omega).
\end{equation}
Based on the assumption on the mesh and the control discretization , we extend $\Pi_hw$ to $\Omega$ by taking $\Pi_hw=w$ for every $x\in\Omega\backslash {\Omega}_h$, and have the following estimates of the interpolation error. For the detailed proofs, we refer to \cite{Cars,MeReVe}.
\begin{lemma}\label{eqn:lemma3}
There is a constant $c_2$ independent of $h$ such that
\begin{equation*}
  h\|z-\Pi_hz\|_{L^2(\Omega)}+\|z-\Pi_hz\|_{H^{-1}(\Omega)}\leq c_2h^2\|z\|_{H^1(\Omega)},
\end{equation*}
holds for all $z\in H^1(\Omega)$.
\end{lemma}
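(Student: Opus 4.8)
The plan is to prove both halves of the inequality by the standard localization-and-scaling machinery for Cl\'ement/Carstensen-type quasi-interpolation, exploiting three structural features of $\Pi_h$. First, $\Pi_h$ reproduces constants, since $\pi_i(c)=c$ and $\sum_i\phi_i\equiv1$. Second, $\pi_i(w)$ depends only on $w$ restricted to the patch $\omega_i:=\operatorname{supp}\phi_i$, so on a cell $T$ the function $\Pi_h w|_T$ depends only on $w$ on $\omega_T:=\bigcup\{T'\in\mathcal T_h : \bar T'\cap\bar T\neq\emptyset\}$. Third, $\Pi_h$ is self-adjoint in $L^2(\Omega_h)$: from $\Pi_h v=\sum_i\frac{\int v\phi_i}{\int\phi_i}\phi_i$ one gets $(\Pi_h v,w)_{L^2(\Omega_h)}=\sum_i\frac{(\int v\phi_i)(\int w\phi_i)}{\int\phi_i}=(v,\Pi_h w)_{L^2(\Omega_h)}$. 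Since by construction $\Pi_h z=z$ on $\Omega\setminus\Omega_h$, all integrals below may be taken over $\Omega_h$, and Assumption \ref{assumption on mesh} (shape regularity and quasi-uniformity) guarantees, with constants independent of $h$ and of the node: $\int_{\Omega_h}\phi_i\,dx\simeq h^n$, $|\omega_i|\simeq h^n$, a bounded number of nodes per cell, a finite overlap of the patches $\{\omega_i\}$ (hence also of $\{\omega_T\}$), and, by scaling to reference patches, Poincar\'e inequalities $\|w-\bar w_{\omega_i}\|_{L^2(\omega_i)}\le Ch\,|w|_{H^1(\omega_i)}$ and similarly on $\omega_T$.

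For the $L^2$-estimate I would first record local $L^2$-stability: $|\pi_i(w)|\le\|w\|_{L^1(\omega_i)}/\!\int\phi_i\le Ch^{-n/2}\|w\|_{L^2(\omega_i)}$, and summing the boundedly many nodal contributions on a cell $T$, each weighted by $\|\phi_i\|_{L^2(T)}\le Ch^{n/2}$, gives $\|\Pi_h w\|_{L^2(T)}\le C\|w\|_{L^2(\omega_T)}$. Using that $\Pi_h$ reproduces the constant $\bar z_{\omega_T}$,
\[
\|z-\Pi_h z\|_{L^2(T)}\le\|z-\bar z_{\omega_T}\|_{L^2(T)}+\|\Pi_h(z-\bar z_{\omega_T})\|_{L^2(T)}\le C\|z-\bar z_{\omega_T}\|_{L^2(\omega_T)}\le Ch\,|z|_{H^1(\omega_T)}.
\]
Squaring, summing over $T\in\mathcal T_h$ and invoking the finite overlap of the patches $\omega_T$ yields $\|z-\Pi_h z\|_{L^2(\Omega)}\le Ch\,|z|_{H^1(\Omega)}\le Ch\|z\|_{H^1(\Omega)}$, i.e. the term $h\|z-\Pi_h z\|_{L^2(\Omega)}\le Ch^2\|z\|_{H^1(\Omega)}$.

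For the $H^{-1}$-estimate I would argue by duality. For $\varphi\in H^1_0(\Omega)$, self-adjointness gives $(z-\Pi_h z,\varphi)=(z,\varphi-\Pi_h\varphi)$, and the partition of unity rewrites $\varphi-\Pi_h\varphi=\sum_i(\varphi-\pi_i(\varphi))\phi_i$. Because $\pi_i(\varphi)$ is exactly the $\phi_i$-weighted mean of $\varphi$, one has $\int(\varphi-\pi_i(\varphi))\phi_i=0$, so a constant may be subtracted from $z$ patchwise for free:
\[
(z,\varphi-\Pi_h\varphi)=\sum_i\int_{\omega_i}(z-\bar z_{\omega_i})\,\bigl(\varphi-\pi_i(\varphi)\bigr)\,\phi_i\,dx.
\]
A short computation (the weighted mean differs from the ordinary mean by at most $Ch^{-n/2}\|\varphi-\bar\varphi_{\omega_i}\|_{L^2(\omega_i)}$, using $\int\phi_i\simeq h^n$ and $|\omega_i|\simeq h^n$) gives $\|\varphi-\pi_i(\varphi)\|_{L^2(\omega_i)}\le C\|\varphi-\bar\varphi_{\omega_i}\|_{L^2(\omega_i)}\le Ch\,|\varphi|_{H^1(\omega_i)}$, whence each summand is bounded by $Ch^2|z|_{H^1(\omega_i)}\,|\varphi|_{H^1(\omega_i)}$. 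A Cauchy--Schwarz in $i$ together with the finite overlap then yields $|(z-\Pi_h z,\varphi)|\le Ch^2|z|_{H^1(\Omega)}\,|\varphi|_{H^1(\Omega)}$, and taking the supremum over $\varphi$ with $\|\varphi\|_{H^1_0(\Omega)}\le1$ gives $\|z-\Pi_h z\|_{H^{-1}(\Omega)}\le Ch^2\|z\|_{H^1(\Omega)}$, completing the estimate.

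I expect the $H^{-1}$ part to be the delicate step. Two points need care: (i) obtaining the extra power of $h$ honestly — this works only because self-adjointness together with the partition-of-unity identity moves the interpolation error onto the test function and permits subtracting a \emph{local} constant from $z$, whereas subtracting a single global constant would lose a factor $h$; and (ii) controlling the gap between the $\phi_i$-weighted average $\pi_i(\varphi)$ and the ordinary average $\bar\varphi_{\omega_i}$, which rests on the quantitative lower bound $\int_{\Omega_h}\phi_i\,dx\simeq h^n$ from quasi-uniformity. The remaining ingredients are routine Bramble--Hilbert/scaling arguments; for the full technical details I would defer to \cite{Cars,MeReVe}.
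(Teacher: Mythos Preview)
Your proposal is correct and follows the standard Carstensen argument. The paper itself does not give a proof of this lemma at all: it states the result and writes ``For the detailed proofs, we refer to \cite{Cars,MeReVe}.'' What you have written is precisely a reconstruction of the argument in the cited reference \cite{Cars} --- local $L^2$-stability plus constant-reproduction for the $L^2$-estimate, and the self-adjointness/duality trick combined with the patchwise mean-zero identity $\int(\varphi-\pi_i(\varphi))\phi_i=0$ for the $H^{-1}$-estimate --- so there is nothing to compare against beyond noting that you have supplied what the paper only cites.
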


Now, we can consider a discretized version of problem (\ref{eqn:modified problems}) as:
\begin{equation}\label{equ:separable discrete problem}
  \left\{ \begin{aligned}
        &\min J_h(y_h,u_h,z_h)=\frac{1}{2}\|y_h-y_d\|_{L^2(\Omega_h)}^{2}+\frac{\alpha}{4}\|u_h\|_{L^2(\Omega_h)}^{2}
       +\frac{\alpha}{4}\|z_h\|_{L^{2}(\Omega_h)}^{2}+\beta\|z_h\|_{L^{1}(\Omega_h)} \\
        &\qquad\qquad {\rm{s.t.}}\qquad \quad~~ y_h=\mathcal{S}_h(u_h+y_c),  \\
         &\qquad \qquad \qquad \quad\qquad   u_h=z_h,\\
          &\qquad \qquad \qquad \quad\qquad z_h\in U_{ad,h},
                          \end{aligned} \right.\tag{$\mathrm{\widetilde{P}}_{h}$}
 \end{equation}
where
\begin{eqnarray}
  \label{eqn:exact norm1}\|z_h\|^2_{L^2(\Omega_h)}&=& \int_{\Omega_h}\left(\sum\limits_{i=1}^{N_h}z_i\phi_i(x)\right)^2\mathrm{d}x, \\
  \label{eqn:exact norm2}\|z_h\|_{L^1(\Omega_h)}&=&\int_{\Omega_h}\big{|}\sum\limits_{i=1}^{N_h}z_i\phi_i(x)\big{|}\mathrm{d}x.
\end{eqnarray}
This implies, for problem (\ref{eqn:orginal problems}), we have the following discretized version:
\begin{equation}\label{equ:discrete problem}
  \left\{ \begin{aligned}
        &\min \limits_{(y_h,u_h,z_h)\in Y_h\times U_h\times U_h}^{}J_h(y_h,u_h,z_h)=\frac{1}{2}\|y_h-y_d\|_{L^2(\Omega_h)}^{2}+\frac{\alpha}{2}\|u_h\|_{L^2(\Omega_h)}^{2}+\beta\|u_h\|_{L^{1}(\Omega_h)} \\
        &\qquad\qquad {\rm{s.t.}}\qquad \quad~~ y_h=\mathcal{S}_h(u_h+y_c),  \\
          &\qquad \qquad \qquad \quad\qquad u_h\in U_{ad,h}.
                          \end{aligned} \right.\tag{$\mathrm{P}_{h}$}
 \end{equation}
For problem (\ref{equ:discrete problem}), in \cite{WaWa}, the authors gave the following error estimates results.
\begin{theorem}{\rm\textbf{\cite[Proposition 4.3]{WaWa}}}\label{theorem:error1}
Let $(y, u)$ be the optimal solution of problem {\rm(\ref{eqn:orginal problems})}, and $(y_h, u_h)$ be the optimal solution of problem {\rm(\ref{equ:discrete problem})}. For every $h_0>0$, $\alpha_0>0$, there is a constant $C>0$ such that for all $0<\alpha\leq\alpha_0$, $0<h\leq h_0$ it holds
\begin{equation}\label{u-error-estimates}
  \|u-u_h\|_{L^2(\Omega)}\leq C(\alpha^{-1}h+\alpha^{-\frac{3}{2}}h^2),
\end{equation}
where $C$ is a constant independent of $h$ and $\alpha$.
\end{theorem}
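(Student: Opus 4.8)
The plan is to carry out the standard variational argument for control--constrained elliptic control problems, keeping careful track of the nonsmooth $L^{1}$--term and of the powers of $\alpha$; the detailed version of this computation is the one in \cite{WaWa}. Write $(y,u)$ and $(y_h,u_h)$ for the two optimal pairs, $p=\mathcal{S}^{*}(y_d-y)$ and $p_h=\mathcal{S}_h^{*}(y_d-y_h)$ for the associated adjoint states, and $p_h(u):=\mathcal{S}_h^{*}(y_d-\mathcal{S}_h(u+y_c))$ for the discrete adjoint evaluated at the \emph{continuous} control. By convexity, $u$ and $u_h$ are characterised by the variational inequalities $\langle\alpha u-p,\tilde u-u\rangle_{L^{2}}+\beta(\|\tilde u\|_{L^{1}(\Omega)}-\|u\|_{L^{1}(\Omega)})\ge0$ for all $\tilde u\in U_{ad}$, and the analogous inequality for $u_h$ over $U_{ad,h}$ with the norm $\|\cdot\|_{L^{1}(\Omega_h)}$. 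First I would test the former with $\tilde u=u_h\in U_{ad,h}\subset U_{ad}$ and the latter with $\tilde u_h=\Pi_h u\in U_{ad,h}$ (admissible by (\ref{quasi-interpolation property})), add the two inequalities, and cancel the matching $\beta\|u_h\|_{L^{1}}$--terms; the discrepancy between $\|\cdot\|_{L^{1}(\Omega)}$ and $\|\cdot\|_{L^{1}(\Omega_h)}$ only contributes an $O(h^{2})$ error, since $u_h\equiv0$ on $\Omega\setminus\Omega_h$ and $|\Omega\setminus\Omega_h|\le ch^{2}$.

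Next I would rearrange. Using the identity $p_h(u)-p_h=-\mathcal{S}_h^{*}\mathcal{S}_h(u-u_h)$, a nonnegative term $\|\mathcal{S}_h(u-u_h)\|_{L^{2}}^{2}$ can be dropped on the left, and the sum collapses to
\[
\alpha\|u-u_h\|_{L^{2}(\Omega)}^{2}\le\langle p-p_h(u),u-u_h\rangle_{L^{2}}+\langle\alpha u_h-p_h,\Pi_h u-u\rangle_{L^{2}}+\beta\bigl(\|\Pi_h u\|_{L^{1}(\Omega_h)}-\|u\|_{L^{1}(\Omega)}\bigr)+Ch^{2}.
\]
The first term on the right is $\le\|p-p_h(u)\|_{L^{2}}\|u-u_h\|_{L^{2}}\le Ch^{2}\|u-u_h\|_{L^{2}}$ by Lemma \ref{eqn:lemma1}, because $\|u\|_{L^{2}}\le\max(|a|,b)\,|\Omega|^{1/2}$ is bounded independently of $\alpha$. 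For the second term I would split $\alpha u_h-p_h=(\alpha u-p)+\alpha(u_h-u)-(p_h-p)$: the piece $\langle\alpha(u_h-u),\Pi_h u-u\rangle\le\alpha\|u_h-u\|_{L^{2}}\,c_2 h\|u\|_{H^{1}}\le Ch\|u_h-u\|_{L^{2}}$ by Lemma \ref{eqn:lemma3} together with the regularity $\|u\|_{H^{1}}\le\alpha^{-1}\|p\|_{H^{1}}\le C\alpha^{-1}$ (which follows from the pointwise formula (\ref{equ:z-piecepoint form}) and the fact that $\Pi_{U_{ad}}$ and ${\rm soft}(\cdot,\beta)$ are pointwise $1$--Lipschitz and fix $0$), while the piece $\langle p-p_h,\Pi_h u-u\rangle\le\|p-p_h\|_{H^{1}}\,\|\Pi_h u-u\|_{H^{-1}}\le C\alpha^{-1}h^{2}$ by the negative--norm estimate of Lemma \ref{eqn:lemma3}, which is legitimate here because $u$, $u_h$, $p$, $p_h$, $\Pi_h u$ all lie in $H_0^{1}(\Omega)$ (for $u$ this again uses (\ref{equ:z-piecepoint form})). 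A Young inequality then absorbs the $\|u-u_h\|_{L^{2}}$--linear terms into $\tfrac{\alpha}{2}\|u-u_h\|_{L^{2}}^{2}$, and after dividing by $\alpha$ the only genuinely remaining contribution is $Q:=\langle\alpha u-p,\Pi_h u-u\rangle_{L^{2}}+\beta(\|\Pi_h u\|_{L^{1}(\Omega_h)}-\|u\|_{L^{1}(\Omega)})$, the rest being of order $\alpha^{-1}h^{2}$ (hence $\le C\alpha^{-3/2}h^{2}$ for $\alpha\le\alpha_0\le1$).

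The hard part will be estimating $Q$, and this is where the nonsmoothness bites. Bounding the two summands of $Q$ separately --- $\beta\|\Pi_h u-u\|_{L^{1}}\le C\alpha^{-1}h$ and $\langle\alpha u-p,\Pi_h u-u\rangle$ by Cauchy--Schwarz in $L^{2}$ --- would only give $\|u-u_h\|_{L^{2}}=O(\alpha^{-1}h^{1/2})$, half a power of $h$ short; the sharp rate needs the pointwise structure of the optimality condition. Since for a.e.\ $x$ the value $u(x)$ minimises $j_x(t):=\tfrac{\alpha}{2}t^{2}+\beta|t|-p(x)t$ over $[a,b]$ and $\Pi_h u(x)\in[a,b]$, one can rewrite $Q=\int_{\Omega_h}\bigl(j_x(\Pi_h u)-j_x(u)-\tfrac{\alpha}{2}(\Pi_h u-u)^{2}\bigr)\,{\rm d}x$; the integrand vanishes wherever $u$ is locally constant equal to one of $a$, $0$, $b$ (there $\Pi_h u$ coincides with $u$) and on $\{a<u<b,\ u\ne0\}$ away from the level set $\{|p|=\beta\}$, so it is supported in an $O(h)$--neighbourhood of the interfaces $\{p=\alpha a+\beta\}$, $\{|p|=\beta\}$, $\{p=\alpha b+\beta\}$, where a one--sided Taylor expansion of $j_x$ bounds it by $|\Pi_h u-u|$ times the (small) distance of $p$ to the relevant threshold. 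Estimating this boundary--layer contribution --- combining the $O(h)$ measure of the strip with the interpolation error of $\Pi_h$ and the regularity of $u$ and $p$ --- is precisely what produces the $\alpha^{-1}h$ and $\alpha^{-3/2}h^{2}$ terms. This stratified, active--set case analysis, carried out in detail in \cite{WaWa}, is the single delicate ingredient; everything else reduces to Lemma \ref{eqn:lemma1}, Lemma \ref{eqn:lemma3}, the regularity of $u$, and Young's inequality.
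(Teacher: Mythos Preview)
Your overall variational set-up (testing the two first-order inequalities with $u_h$ and $\Pi_h u$, adding, and peeling off the $\|\mathcal{S}_h(u-u_h)\|^2$ term) matches the argument the paper uses for the closely related Theorem~\ref{theorem:error2}; the paper itself does not reproduce a proof of Theorem~\ref{theorem:error1} but simply cites \cite{WaWa}.  Where you go astray is in declaring $Q=\langle\alpha u-p,\Pi_h u-u\rangle+\beta(\|\Pi_h u\|_{L^1(\Omega_h)}-\|u\|_{L^1(\Omega)})$ to be the ``hard part'' requiring a stratified active-set analysis.  In the paper's proof of Theorem~\ref{theorem:error2} (and the identical reasoning applies verbatim to Theorem~\ref{theorem:error1}) this term is disposed of in two lines, with no case analysis whatsoever.

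First, the $\beta$-contribution is nonpositive.  By nonnegativity and partition of unity of the nodal basis,
\[
\|\Pi_h u\|_{L^1(\Omega_h)}\le\sum_i|\pi_i(u)|\int_{\Omega_h}\phi_i\,dx
=\sum_i\Big|\int_{\Omega_h}u\phi_i\,dx\Big|
\le\int_{\Omega_h}|u|\sum_i\phi_i\,dx=\|u\|_{L^1(\Omega_h)}\le\|u\|_{L^1(\Omega)};
\]
this is exactly the mechanism behind the paper's estimate $I_3\le0$ (see (\ref{equ:equation about l^1 norm}) and the surrounding argument).  No $O(\alpha^{-1}h)$ loss occurs here at all.

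Second, for the inner-product part you should use the \emph{same} $H^1\times H^{-1}$ duality you already invoked for $\langle p-p_h,\Pi_h u-u\rangle$, not Cauchy--Schwarz in $L^2$.  Since $u\in H_0^1(\Omega)$ (from (\ref{equ:z-piecepoint form}) and $a<0<b$) and $p\in H_0^1(\Omega)$, one has $\alpha u-p\in H_0^1(\Omega)$ with $\|\alpha u-p\|_{H^1}\le 2\|p\|_{H^1}+C$ bounded independently of $\alpha$, and Lemma~\ref{eqn:lemma3} gives
\[
\langle\alpha u-p,\Pi_h u-u\rangle\le\|\alpha u-p\|_{H^1}\,\|\Pi_h u-u\|_{H^{-1}}\le C\,c_2h^2\|u\|_{H^1}\le C\alpha^{-1}h^2.
\]
This is the paper's estimate (\ref{error estimat14}) for $I_5$.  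Combining, $Q\le C\alpha^{-1}h^2$, which after Young and division by $\alpha$ yields the stated bound; the boundary-layer / level-set analysis you sketch is neither needed nor part of the argument in \cite{WaWa} or in the paper.
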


However, the resulting discretized problem (\ref{equ:separable discrete problem}) is not in a decoupled form as the finite dimensional $l^1$-regularization optimization problem usually does, since (\ref{eqn:exact norm1}) and (\ref{eqn:exact norm2})
do not have a decoupled form. Thus, if we directly apply ADMM algorithm to solve the discretized problem, then the $z$-subproblem can not have a closed form solution which similar to (\ref{z-closed form solution}). 
Thus, directly solving (\ref{equ:separable discrete problem}) it can not make full use of the advantages of ADMM.
In order to overcome this bottleneck, we introduce the nodal quadrature formulas to approximately discretized the \textsl{$L^2$}-norm and \textsl{$L^1$}-norm. Let
\begin{eqnarray}
&&\|z_h\|_{L^{2}_h(\Omega_h)}:=\left(\sum\limits_{i=1}^{N_h}(z_i)^2\int_{\Omega_h}\phi_i(x)\mathrm{dx}\right)^\frac{1}{2},\label{eqn:approx norm2}\\
&&\|z_h\|_{L^{1}_h(\Omega_h)}:=\sum\limits_{i=1}^{N_h}|z_i|\int_{\Omega_h}\phi_i(x)\mathrm{dx},\label{eqn:approx norm1}
\end{eqnarray}
and call them $L^{2}_h$- and $L^{1}_h$-norm, respectively.

It is obvious that the \textsl{$L^{2}_h$}-norm and the \textsl{$L^{1}_h$}-norm can be considered as a weighted $l^2$-norm and a weighted $l^1$-norm of the coefficient of $z_h$, respectively. Both of them are norms on $U_h$. In addition, the \textsl{$L^{2}_h$}-norm is a norm induced by the following inner product:
\begin{equation}\label{eqn:approx inner product}
 \langle z_h,v_h\rangle_{L^{2}_h(\Omega_h)}=\sum\limits_{i=1}^{N_h}(z_iv_i)\int_{\Omega_h}\phi_i(x)\mathrm{d}x\quad {\rm{for}}\  z_h,v_h\in U_h.
\end{equation}\\
More importantly, the following properties hold.
\begin{proposition}{\rm\textbf{\cite[Table 1]{Wathen}}}\label{eqn:martix properties}
$\forall$ $z_h\in U_h$, the following inequalities hold:
\begin{eqnarray}
 \label{equ:martix properties1}&&\|z_h\|^2_{L^{2}(\Omega_h)}\leq\|z_h\|^2_{L^{2}_h(\Omega_h)}\leq c\|z_h\|^2_{L^{2}(\Omega_h)}, \quad where \quad c=
 \left\{ \begin{aligned}
         &4  \quad if \quad n=2, \\
         &5  \quad if \quad n=3.
                           \end{aligned} \right.
                           \\
 \label{equ:martix properties2} &&\int_{\Omega_h}|\sum_{i=1}^n{z_i\phi_i(x)}|~\mathrm{d}x\leq\|z_h\|_{L^{1}_h(\Omega_h)}.
\end{eqnarray}
\end{proposition}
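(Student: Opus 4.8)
The plan is to read the two squared norms as quadratic forms in the coefficient vector $z=(z_1,\dots,z_{N_h})^\top$ of $z_h$. Writing $M_h$ for the consistent $P_1$ mass matrix, $(M_h)_{ij}=\int_{\Omega_h}\phi_i\phi_j\,\mathrm{d}x$, and $W_h$ for the lumped (diagonal) mass matrix, $(W_h)_{ii}=\int_{\Omega_h}\phi_i\,\mathrm{d}x$, we have $\|z_h\|_{L^2(\Omega_h)}^2=z^\top M_h z$ by \eqref{eqn:exact norm1} and $\|z_h\|_{L^2_h(\Omega_h)}^2=z^\top W_h z$ by \eqref{eqn:approx norm2}. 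Because of the partition-of-unity property $\sum_i\phi_i\equiv1$ from \eqref{basic functions properties}, the diagonal of $W_h$ is exactly the vector of row sums of $M_h$, so $W_h$ is obtained from $M_h$ by row-sum lumping; consequently both forms assemble element by element, $z^\top M_h z=\sum_{T\in\mathcal{T}_h} z_T^\top M_T z_T$ and $z^\top W_h z=\sum_{T\in\mathcal{T}_h} z_T^\top W_T z_T$, where $z_T\in\mathbb{R}^{n+1}$ collects the nodal values at the vertices of $T$ and $M_T$, $W_T$ are the associated local matrices. Hence it suffices to prove the local matrix inequalities $M_T\preceq W_T\preceq (n+2)M_T$ on every simplex $T$ and then sum over $T\in\mathcal{T}_h$.

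For the local step I would invoke the closed form of the $P_1$ mass matrix on a simplex: expressing the nodal basis in barycentric coordinates, $\int_T\lambda_i\lambda_j\,\mathrm{d}x$ depends only on $|T|$ and $n$, which yields $M_T=\frac{|T|}{(n+1)(n+2)}\bigl(I_{n+1}+\mathbf{1}\mathbf{1}^\top\bigr)$, whose row sums equal $\frac{|T|}{n+1}$, so $W_T=\frac{|T|}{n+1}I_{n+1}$. Since $\mathbf{1}\mathbf{1}^\top$ has eigenvalue $n+1$ on $\operatorname{span}\{\mathbf{1}\}$ and $0$ on its orthogonal complement, one computes $W_T-M_T=\frac{|T|}{(n+1)(n+2)}\bigl((n+1)I_{n+1}-\mathbf{1}\mathbf{1}^\top\bigr)$, whose eigenvalues are $0$ and $n+1$, hence $W_T-M_T\succeq0$; and, cancelling exactly, $(n+2)M_T-W_T=\frac{|T|}{n+1}\mathbf{1}\mathbf{1}^\top\succeq0$. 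Summing these two inequalities over all $T$ gives $z^\top M_h z\le z^\top W_h z\le (n+2)\,z^\top M_h z$, i.e. \eqref{equ:martix properties1} with $c=n+2$, which is $4$ for $n=2$ and $5$ for $n=3$.

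The inequality \eqref{equ:martix properties2} is then immediate: each $\phi_i\ge0$ by \eqref{basic functions properties}, so the pointwise triangle inequality gives $\bigl|\sum_{i=1}^{N_h} z_i\phi_i(x)\bigr|\le\sum_{i=1}^{N_h}|z_i|\phi_i(x)$ for a.e.\ $x\in\Omega_h$; integrating over $\Omega_h$ and recalling \eqref{eqn:approx norm1} yields $\int_{\Omega_h}\bigl|\sum_{i=1}^{N_h} z_i\phi_i(x)\bigr|\,\mathrm{d}x\le\sum_{i=1}^{N_h}|z_i|\int_{\Omega_h}\phi_i(x)\,\mathrm{d}x=\|z_h\|_{L^1_h(\Omega_h)}$.

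The main obstacle is essentially bookkeeping rather than depth: the only genuine computation is the local matrix inequality with the sharp constant $c=n+2$, which rests on the explicit $P_1$ local mass matrix and the spectrum of $\mathbf{1}\mathbf{1}^\top$, while the elementwise assembly of the lumped matrix (which is what ties the global statement to the local one) relies on the partition-of-unity property. It is worth noting that no mesh-regularity hypothesis is actually used here, since on a simplex the $P_1$ local mass matrix is, up to the scalar factor $|T|$, independent of the shape of $T$.
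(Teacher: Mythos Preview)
Your proof is correct. The paper does not supply its own proof of this proposition; it simply cites the result from Wathen's eigenvalue bounds for Galerkin mass matrices. Your argument---expressing both norms as quadratic forms $z^\top M_h z$ and $z^\top W_h z$, reducing to the element level, and then reading off the sharp spectral inequalities $M_T\preceq W_T\preceq(n+2)M_T$ from the explicit $P_1$ local mass matrix $M_T=\tfrac{|T|}{(n+1)(n+2)}(I_{n+1}+\mathbf{1}\mathbf{1}^\top)$---is exactly the standard route used in that reference, so there is nothing to contrast. Your treatment of \eqref{equ:martix properties2} via the pointwise triangle inequality and nonnegativity of the nodal basis is likewise the natural one-line argument; the paper neither proves nor cites this separately.
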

Thus, based on (\ref{eqn:approx norm1}) and (\ref{eqn:approx norm2}), we derive a new discretized optimal control problems 
\begin{equation}\label{equ:approx discretized}
  \left\{ \begin{aligned}
        &\min J_h(y_h,u_h,z_h)=\frac{1}{2}\|y_h-y_d\|_{L^2(\Omega_h)}^{2}+\frac{\alpha}{4}\|u_h\|_{L^2(\Omega_h)}^{2}
        +\frac{\alpha}{4}\|z_h\|_{L^{2}_h(\Omega_h)}^{2}+\beta\|z_h\|_{L^{1}_h(\Omega_h)} \\
        &\qquad\quad \quad{\rm{s.t.}}\qquad \quad~~ y_h=\mathcal{S}_hu_h,  \\
         &\qquad \qquad \qquad\qquad \quad   u_h=z_h,\\
          &\qquad \qquad \qquad\qquad \quad z_h\in U_{ad,h}.
                          \end{aligned} \right.\tag{$\mathrm{\widetilde{DP}}_{h}$}
 \end{equation}

It is should mentioned that the approximate $L^{1}_h$ was already used in \cite[Section 4.4]{WaWa}. However, different from their discretization schemes, in this paper, in order to keep the separability of the discrete $L^2$-norm with respect to $z$, we use (\ref{eqn:approx norm2}) to approximately discretize it.
In addition, although these nodal quadrature formulas incur additional discrete errors, as it will be proven that these approximation steps will not change the order of error estimates as shown in (\ref{u-error-estimates}), see Theorem \ref{theorem:error1}. More importantly, these nodal quadrature formulas will turn out to be crucial in order to obtain formulas parallel to (\ref{equ:saddle point problems2}) and (\ref{z-closed form solution}) for the discretized problem (\ref{equ:approx discretized}), see Remark 4.4 below.

Analogous to the continuous problem (\ref{eqn:modified problems}), the discretized problem (\ref{equ:approx discretized}) is also a strictly convex problem, which is uniquely solvable. We derive the following first-order optimality conditions, which is necessary and sufficient for the optimal solution of (\ref{equ:approx discretized}).
\begin{theorem}[{\rm \textbf{Discrete first-order optimality condition}}]
$(u_h, z_h, y_h)$ is the optimal solution of {\rm{(\ref{equ:approx discretized})}}, if and only if there exist an adjoint state $p_h$ and a Lagrange multiplier $\lambda_h$, such that the following conditions are satisfied
\begin{subequations}\label{eqn:DKKT}
\begin{eqnarray}
&&y_h=\mathcal{S}_h (u_h+y_c)\label{eqn1:DKKT}, \\
&&p_h=\mathcal{S}_h^*(y_h-y_d)\label{eqn2:DKKT},\\
&&\frac{\alpha}{2} u_h+p_h+\lambda_h=0\label{eqn3:DKKT},\\
&&u_h=z_h\label{eqn4:DKKT},\\
&&{z_h} \in U_{ad,h}\label{eqn5:DKKT},\\
&&{\left\langle\frac{\alpha}{2} z_h,\tilde{z}_h-z_h\right\rangle_{L^{2}_h(\Omega_h)}-(\lambda_h,\tilde{z}_h-z_h)_{L^2(\Omega_h)}+\beta\left(\|\tilde{z}_h\|_{L^{1}_h(\Omega_h)}-\|z\|_{L^{1}_h(\Omega_h)}\right)}\geq0, \label{eqn6:DKKT}\\
\nonumber\qquad&&\forall \tilde{z}_h \in U_{ad,h}.
\end{eqnarray}
\end{subequations}
\end{theorem}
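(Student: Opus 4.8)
The plan is to treat problem \eqref{equ:approx discretized} as what it becomes after discretization: a finite-dimensional convex program whose objective is the sum of a smooth convex quadratic, the weighted sparsity term $\beta\|z_h\|_{L^1_h(\Omega_h)}$, and the indicator of the box $U_{ad,h}$, subject to the two affine constraints $y_h=\mathcal{S}_h(u_h+y_c)$ and $u_h=z_h$. First I would record that $J_h$ is strictly convex: the term $\tfrac{\alpha}{4}\|u_h\|^2_{L^2(\Omega_h)}$ is strictly convex in $u_h$, the term $\tfrac{\alpha}{4}\|z_h\|^2_{L^2_h(\Omega_h)}$ is strictly convex in $z_h$ by Proposition \ref{eqn:martix properties} (it is the squared norm of a genuine inner product on $U_h$), and $\mathcal{S}_h$ is injective; hence a unique minimizer $(y_h,u_h,z_h)$ exists. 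Since all equality constraints are affine and the only inequality constraint describes the polyhedral set $U_{ad,h}$, the affine/polyhedral constraint qualification holds automatically, so the KKT conditions are both necessary and sufficient for optimality, exactly in parallel with the continuous case of Theorem \ref{First-Order Optimality Condition}.

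Next I would form the Lagrangian by eliminating the state equation (keeping the adjoint $p_h$), attaching a multiplier $\lambda_h\in U_h$ to the coupling constraint $u_h-z_h=0$ through the $L^2(\Omega_h)$-pairing, and attaching the normal cone of $U_{ad,h}$ to $z_h$. Differentiating the smooth part in $u_h$ uses the self-adjointness of $\mathcal{S}_h$:
\[
\nabla_{u_h}\Big(\tfrac12\|\mathcal{S}_h(u_h+y_c)-y_d\|^2_{L^2(\Omega_h)}+\tfrac{\alpha}{4}\|u_h\|^2_{L^2(\Omega_h)}\Big)=\mathcal{S}_h^*\big(\mathcal{S}_h(u_h+y_c)-y_d\big)+\tfrac{\alpha}{2}u_h .
\]
Introducing $y_h:=\mathcal{S}_h(u_h+y_c)$ and $p_h:=\mathcal{S}_h^*(y_h-y_d)$ turns the state equation and the adjoint definition into \eqref{eqn1:DKKT}--\eqref{eqn2:DKKT}, stationarity in $u_h$ becomes $\tfrac{\alpha}{2}u_h+p_h+\lambda_h=0$, which is \eqref{eqn3:DKKT}, feasibility of the coupling constraint is \eqref{eqn4:DKKT}, and $z_h\in U_{ad,h}$ is \eqref{eqn5:DKKT}.

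For the remaining condition \eqref{eqn6:DKKT} I would isolate the $z_h$-subproblem: minimize $\psi(z_h):=\tfrac{\alpha}{4}\|z_h\|^2_{L^2_h(\Omega_h)}-(\lambda_h,z_h)_{L^2(\Omega_h)}$ — which is smooth, with gradient acting through the $L^2_h$-inner product since $\|\cdot\|_{L^2_h(\Omega_h)}$ is precisely the norm induced by $\langle\cdot,\cdot\rangle_{L^2_h(\Omega_h)}$ — plus the convex nonsmooth term $\beta\|z_h\|_{L^1_h(\Omega_h)}$, over the convex set $U_{ad,h}$. The standard first-order characterization for minimizing a differentiable-plus-convex function over a convex set (equivalently, the convex subdifferential sum rule, valid here since the feasible set is polyhedral) then reads
\[
\big\langle \tfrac{\alpha}{2}z_h,\,\tilde z_h-z_h\big\rangle_{L^2_h(\Omega_h)}-(\lambda_h,\,\tilde z_h-z_h)_{L^2(\Omega_h)}+\beta\big(\|\tilde z_h\|_{L^1_h(\Omega_h)}-\|z_h\|_{L^1_h(\Omega_h)}\big)\ge 0\quad\forall\,\tilde z_h\in U_{ad,h},
\]
which is exactly \eqref{eqn6:DKKT}. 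Conversely, if $(u_h,z_h,y_h,p_h,\lambda_h)$ satisfies \eqref{eqn:DKKT}, then \eqref{eqn1:DKKT}--\eqref{eqn3:DKKT} state that $u_h$ is a critical point of the (convex, smooth-in-$u_h$) partial Lagrangian and \eqref{eqn6:DKKT} states that $z_h$ solves its convex subproblem; convexity of $J_h$ together with feasibility \eqref{eqn4:DKKT}--\eqref{eqn5:DKKT} then upgrades this to global optimality via the subgradient inequality, and strict convexity makes $(y_h,u_h,z_h)$ unique while $p_h,\lambda_h$ are determined by \eqref{eqn2:DKKT}--\eqref{eqn3:DKKT}.

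The proof has no genuinely hard step; the only point requiring care — the \emph{main obstacle} — is the bookkeeping of the three different pairings that occur, namely the $L^2(\Omega_h)$-inner product carrying the multiplier $\lambda_h$ and the term $\tfrac{\alpha}{4}\|u_h\|^2$, the weighted $L^2_h(\Omega_h)$-inner product carrying the term $\tfrac{\alpha}{4}\|z_h\|^2$, and the weighted $L^1_h(\Omega_h)$-norm carrying the sparsity term. This mismatch is exactly why \eqref{eqn6:DKKT} must be written as a variational inequality mixing $\langle\cdot,\cdot\rangle_{L^2_h(\Omega_h)}$ and $(\cdot,\cdot)_{L^2(\Omega_h)}$ rather than as a single clean subdifferential inclusion; once the weighted norms are recognized as legitimate inner-product / norm functionals on the finite-dimensional space $U_h$ via Proposition \ref{eqn:martix properties}, the argument proceeds in complete analogy with the proof of Theorem \ref{First-Order Optimality Condition}.
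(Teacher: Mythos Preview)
The paper does not actually supply a proof of this theorem; it states the discrete KKT system and immediately moves on to the error analysis, treating the result as the routine finite-dimensional analogue of Theorem~\ref{First-Order Optimality Condition}. Your proposal fills in exactly the standard argument one would give: strict convexity of $J_h$, automatic constraint qualification from the affine/polyhedral structure, stationarity in $u_h$ via the chain rule with $\mathcal{S}_h^*$, and the variational-inequality characterization of the $z_h$-subproblem. This is correct and is precisely what the paper is implicitly invoking.

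Your handling of the one delicate point --- that the smooth part of the $z_h$-subproblem mixes the $L^2_h(\Omega_h)$-inner product (from $\tfrac{\alpha}{4}\|z_h\|^2_{L^2_h}$) with the $L^2(\Omega_h)$-pairing (from the multiplier $\lambda_h$) --- is also correct: you compute the directional derivative of the smooth part directly, obtaining the mixed pairing in \eqref{eqn6:DKKT}, rather than trying to identify a single gradient vector. That is the right way to justify the somewhat unusual form of \eqref{eqn6:DKKT}, and it is consistent with the sign conventions the paper uses (note $p_h=\mathcal{S}_h^*(y_h-y_d)$ here versus $p^*=\mathcal{S}^*(y_d-y^*)$ in the continuous case, compensated by the sign flip between \eqref{eqn3:KKT} and \eqref{eqn3:DKKT}).
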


Now, let us start to do error estimation. Let $(y, u, z)$ be the optimal solution of problem (\ref{eqn:modified problems}), and $(y_h, u_h, z_h)$ be the optimal solution of problem (\ref{equ:approx discretized}). We have the following results.
\begin{theorem}\label{theorem:error2}
Let $(y, u, z)$ be the optimal solution of problem {\rm(\ref{eqn:modified problems})}, and $(y_h, u_h,z_h)$ be the optimal solution of problem {\rm(\ref{equ:approx discretized})}. For any $h>0$ small enough and $\alpha_0>0$, there is a constant $C$ such that: for all $0<\alpha\leq\alpha_0$,
\begin{eqnarray*}
  \frac{\alpha}{2}\|u-u_h\|^2_{L^2(\Omega)}+\frac{1}{2}\|y-y_h\|^2_{L^2(\Omega)}\leq C(h^2+\alpha h^2+\alpha^{-1} h^2+h^3+\alpha^{-1}h^4+\alpha^{-2}h^4),
\end{eqnarray*} where $C$ is a constant independent of $h$ and $\alpha$.
\end{theorem}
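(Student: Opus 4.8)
The plan is to follow the classical variational route for finite element error estimates of optimal control problems, but carefully track the extra terms introduced by the nodal quadrature approximations of the $L^1$- and $L^2$-norms. I would begin from the first-order optimality conditions (\ref{eqn:KKT}) for the continuous problem (\ref{eqn:modified problems}) and (\ref{eqn:DKKT}) for the discretized problem (\ref{equ:approx discretized}). The crucial idea is to test the continuous variational inequality (\ref{eqn6:KKT}) with $\tilde z = \Pi_h z_h \in U_{ad}$ (legal by (\ref{quasi-interpolation property})) and the discrete variational inequality (\ref{eqn6:DKKT}) with $\tilde z_h = \Pi_h z \in U_{ad,h}$, then add the two inequalities. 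After using $u=z$ and $u_h=z_h$, this produces a bound of the form
\begin{equation*}
  \tfrac{\alpha}{2}\|u-u_h\|_{L^2}^2 + \|y-y_h\|_{L^2}^2 \le (\text{adjoint/state mismatch terms}) + (\text{interpolation terms}) + (\text{quadrature terms}),
\end{equation*}
where the quadrature terms come from the discrepancies $\|z_h\|_{L^2_h}^2-\|z_h\|_{L^2}^2$ and $\|z_h\|_{L^1_h}-\|z_h\|_{L^1}$ (controlled via Proposition \ref{eqn:martix properties}) together with the mismatch between $\langle\cdot,\cdot\rangle_{L^2_h}$ and $(\cdot,\cdot)_{L^2}$ acting on $z_h$ and $\Pi_h z$.

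Next I would estimate each group of terms. For the state and adjoint mismatch I use Lemma \ref{eqn:lemma1}, which gives $\|\mathcal{S}-\mathcal{S}_h\|_{L^2\to L^2}\le c_1 h^2$ and $\|\mathcal{S}-\mathcal{S}_h\|_{L^2\to H^1}\le c_1 h$, inserting $\mathcal{S}_h$ vs. $\mathcal{S}$ into the adjoint equations (\ref{eqn2:KKT}), (\ref{eqn2:DKKT}); combined with the regularity $y,p\in H^1$ and $u\in H^1(\Omega)$ from Theorem \ref{First-Order Optimality Condition} and the a priori bounds (\ref{equ:control estimats state}) one picks up the $h^2$, $\alpha^{-1}h^2$, $\alpha^{-1}h^4$ contributions (with $\alpha^{-1}$ factors because $\|u\|_{H^1}$ and $\|p\|$ scale like $\alpha^{-1}$). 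For the interpolation terms I invoke Lemma \ref{eqn:lemma3}: $\|z-\Pi_h z\|_{L^2}\le c_2 h\|z\|_{H^1}$ and $\|z-\Pi_h z\|_{H^{-1}}\le c_2 h^2\|z\|_{H^1}$, again using $z=u\in H^1$, so $\|z\|_{H^1}\lesssim\alpha^{-1}$, producing the remaining $\alpha h^2$, $h^3$, $\alpha^{-2}h^4$ type terms after Young's inequality. Throughout I would repeatedly apply Young's inequality $ab\le \tfrac{\varepsilon}{2}a^2+\tfrac{1}{2\varepsilon}b^2$ with $\varepsilon$ proportional to $\alpha$ to absorb any $\|u-u_h\|_{L^2}^2$ terms appearing on the right-hand side back into the left, which is what generates the negative powers of $\alpha$.

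The main obstacle I anticipate is the correct handling of the quadrature-induced terms, i.e. showing that replacing $(\cdot,\cdot)_{L^2}$ and $\|\cdot\|_{L^1}$ by their lumped counterparts on $U_h$ contributes at most $O(h^2)$ (times appropriate $\alpha$-powers) and does not degrade the rate. One must write, for instance, $\langle \tfrac{\alpha}{2}z_h,\Pi_h z - z_h\rangle_{L^2_h} - (\tfrac{\alpha}{2}z_h,\Pi_h z - z_h)_{L^2}$ and bound it using the equivalence constants in (\ref{equ:martix properties1}) together with an $O(h)$-type estimate on $\|v_h\|_{L^2_h}-\|v_h\|_{L^2}$ for finite element functions (a standard mass-lumping estimate, $|\,\|v_h\|_{L^2_h}^2-\|v_h\|_{L^2}^2\,|\le C h^2\|\nabla v_h\|_{L^2}^2$ on quasi-uniform meshes), and similarly for the $L^1_h$ vs. $L^1$ discrepancy. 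Keeping careful track of which quantities are bounded independently of $\alpha$ and which blow up like $\alpha^{-1}$ or $\alpha^{-3/2}$ — and then squaring — is what yields exactly the stated collection of terms $h^2+\alpha h^2+\alpha^{-1}h^2+h^3+\alpha^{-1}h^4+\alpha^{-2}h^4$. Once all pieces are assembled and the $\|u-u_h\|_{L^2}^2$ terms absorbed, the claimed estimate follows.
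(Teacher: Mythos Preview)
Your overall strategy is right, but two points diverge from what actually works, and one of them is a genuine gap.

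First, a minor correction: you should test the continuous variational inequality (\ref{eqn6:KKT}) with $z_h$ itself, not with $\Pi_h z_h$. Since $U_{ad,h}\subset U_{ad}$, $z_h$ is already an admissible test function, and using $\Pi_h z_h$ only adds noise.

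Second, and more seriously, your plan for the ``quadrature terms'' is where the argument would break down. You propose to control discrepancies such as $\langle \tfrac{\alpha}{2}z_h,\Pi_h z - z_h\rangle_{L^2_h} - (\tfrac{\alpha}{2}z_h,\Pi_h z - z_h)_{L^2}$ and $\|z_h\|_{L^1_h}-\|z_h\|_{L^1}$ by standard mass-lumping estimates of the type $|\,\|v_h\|_{L^2_h}^2-\|v_h\|_{L^2}^2\,|\le Ch^2\|\nabla v_h\|_{L^2}^2$. The problem is that you have no a priori bound on $\|\nabla z_h\|_{L^2}$ that is uniform in $h$; the discrete control is only bounded in $L^\infty$, not in $H^1$, so this route does not close.

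The correct handling avoids any $O(h^2)$ quadrature estimate altogether and instead exploits a structural \emph{duality} between $\Pi_h$ and the lumped inner product: by the very definition (\ref{equ:quasi-interpolation}) one has, for any $v_h\in U_h$ and $w\in L^1$,
\[
\langle v_h,\Pi_h w\rangle_{L^2_h(\Omega_h)}
=\sum_i v_{h,i}\,\pi_i(w)\!\int_{\Omega_h}\!\phi_i
=\sum_i v_{h,i}\!\int_{\Omega_h}\! w\,\phi_i
=(v_h,w)_{L^2(\Omega_h)} .
\]
Combining this with $\|z_h\|_{L^2_h}^2\ge\|z_h\|_{L^2}^2$ from Proposition~\ref{eqn:martix properties} gives $\langle z_h,\Pi_h z-z_h\rangle_{L^2_h}\le (z_h,z-z_h)_{L^2}$. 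Similarly, $\|\Pi_h z\|_{L^1_h}\le\|z\|_{L^1}$ and $\|z_h\|_{L^1}\le\|z_h\|_{L^1_h}$ make the entire $L^1$-quadrature block nonpositive. In other words, the lumping errors drop out \emph{with the right sign}, and no gradient bound on $z_h$ is ever needed. Once this is in place, the rest of your outline (splitting via $\tilde p_h=\mathcal S_h^*(y-y_d)$, Lemmas~\ref{eqn:lemma1} and~\ref{eqn:lemma3}, Young's inequality with weights proportional to $\alpha$) goes through exactly as you describe.
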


\proof
Due to the optimality of $z$ and $z_h$, $z$ and $z_h$ satisfy (\ref{eqn6:KKT}) and (\ref{eqn6:DKKT}), respectively. Let us use the test function $z_h\in U_{ad,h}\subset U_{ad}$ in (\ref{eqn6:KKT}) and the test function $\tilde{z}_h:=\Pi_hz\in U_{ad,h}$ in (\ref{eqn6:DKKT}), thus we have
\begin{eqnarray}
\label{eqn:error1}&&{\left\langle\frac{\alpha}{2} z-\lambda,z_h-z\right\rangle_{L^2(\Omega)}+\beta\left(\|z_h\|_{L^1(\Omega)}-\|z\|_{L^1(\Omega)}\right)}\geq0, \\
\label{eqn:error2}&&{\left\langle\frac{\alpha}{2} z_h,\tilde{z}_h-z_h\right\rangle_{L^{2}_h(\Omega_h)}-\langle\lambda_h,\tilde{z}_h-z_h\rangle_{L^2(\Omega_h)}+\beta\left(\|\tilde{z}_h\|_{L^{1}_h(\Omega_h)}-\|z_h\|_{L^{1}_h(\Omega_h)}\right)}\geq0.
\end{eqnarray}
Because $z_h=0$ on $\bar\Omega\backslash {\Omega}_h$, the integrals over $\Omega$ can be replaced by integrals over $\Omega_h$ in (\ref{eqn:error1}), and it can be rewritten as
\begin{eqnarray}\label{eqn:error3}
 \hspace{0.3in}{\left\langle\frac{\alpha}{2} z-\lambda,z-z_h\right\rangle_{L^2(\Omega_h)}+\beta\left(\|z\|_{L^1(\Omega_h)}-\|z_h\|_{L^1(\Omega_h)}\right)}&\leq&\left\langle\lambda-\frac{\alpha}{2} z,z\right\rangle_{L^2(\Omega\backslash {\Omega}_h)}-\beta\|z\|_{L^1(\Omega\backslash {\Omega}_h)} \\
  \nonumber&\leq&  \langle\lambda,z\rangle_{L^2(\Omega\backslash {\Omega}_h)}\leq ch^2,
\end{eqnarray}
where the last inequality follows from the boundedness of $\lambda$ and $z$ and the assumption $|\Omega\backslash {\Omega}_h|\leq c h^2$.

By the definition of the quasi-interpolation operator in (\ref{equ:quasi-interpolation}) and (\ref{equ:martix properties1}) in Proposition \ref{eqn:martix properties}, we have
\begin{equation}\label{estimates inner product}
\begin{aligned}
 \langle z_h,\tilde{z}_h- z_h\rangle_{L^{2}_h(\Omega_h)}&=\langle z_h,\tilde{z}_h\rangle_{L^{2}_h(\Omega_h)}-\|z_h\|^2_{L^{2}_h(\Omega_h)}
\leq\langle z_h,z-z_h\rangle_{L^2(\Omega_h)}.
\end{aligned}
\end{equation}
Thus, (\ref{eqn:error2}) can be rewritten as
\begin{eqnarray}
\label{eqn:error4}&&{\left\langle-\frac{\alpha}{2} z_h+\lambda_h,z-z_h\right\rangle_{L^{2}(\Omega_h)}+\langle\lambda_h,\tilde{z}_h-z\rangle_{L^2(\Omega_h)}-\beta\left(\|\tilde{z}_h\|_{L^{1}_h(\Omega_h)}-\|z_h\|_{L^{1}_h(\Omega_h)}\right)}\leq0.
\end{eqnarray}
Adding up and rearranging (\ref{eqn:error3}) and (\ref{eqn:error4}), we obtain
\begin{equation}\label{eqn:error estimat1}
  \begin{aligned}
  \frac{\alpha}{2}\|z-z_h\|^2_{L^2(\Omega_h)}\leq& \langle\lambda-\lambda_h,z-z_h\rangle_{L^2(\Omega_h)}-\langle\lambda_h,\tilde{z}_h-z\rangle_{L^2(\Omega_h)}\\
&+\beta\left(\|z_h\|_{L^1(\Omega_h)}-\|z\|_{L^1(\Omega_h)}+\|\tilde{z}_h\|_{L^{1}_h(\Omega_h)}-\|z_h\|_{L^{1}_h(\Omega_h)}\right)+ch^2\\
\leq&\begin{array}{c}
\underbrace{\left\langle\frac{\alpha}{2}(u_h-u)+p_h-p,z-z_h\right\rangle_{L^2(\Omega_h)}}\\
I_1                                                                                 \end{array}
-\begin{array}{c}
\underbrace{\left\langle\frac{\alpha}{2}u_h+p_h,\tilde{z}_h-z\right\rangle_{L^2(\Omega_h)}}\\
I_2
\end{array}\\
&+\begin{array}{c}
\underbrace{\beta\left(\|z_h\|_{L^1(\Omega_h)}-\|z\|_{L^1(\Omega_h)}+\|\tilde{z}_h\|_{L^{1}_h(\Omega_h)}-\|z_h\|_{L^{1}_h(\Omega_h)}\right)}\\
I_3
\end{array}
+ch^2,
\end{aligned}
\end{equation}
where the second inequality follows from (\ref{eqn3:KKT}) and (\ref{eqn3:DKKT}).

Next, we first estimate the third term $I_3$. By (\ref{equ:martix properties2}) in Proposition \ref{eqn:martix properties}, we have $\|z_h\|_{L^1(\Omega_h)}\leq\|z_h\|_{L^{1}_h(\Omega_h)}$.
And following from the definition of $\tilde{z}_h=\Pi_h(z)$ and the non-negativity and partition of unity of the nodal basis functions, we get
\begin{equation}\label{equ:equation about l^1 norm}
  \|\tilde{z}_h\|_{L^{1}_h(\Omega_h)}=\|\Pi_h(z)\|_{L^{1}_h(\Omega_h)}
=\sum\limits_{i=1}^{N_h}\left|\frac{\int_{\Omega_h}z(x)\phi_i{\rm{d}}x}{\int_{\Omega_h}\phi_i{\rm{d}}x}\right|{\int_{\Omega_h}\phi_i{\rm{d}}x}=\|z\|_{L^1(\Omega_h)}.
\end{equation}
Thus, we have $I_3\leq 0$.

For the terms $I_1$ and $I_2$, from $u=z$,~$u_h=z_h$, we get
\begin{equation*}
  I_1-I_2=-\frac{\alpha}{2}\|u-u_h\|^2_{L^2(\Omega_h)}+\langle p_h-p,\tilde{z}_h-z_h\rangle_{L^2(\Omega_h)}+\left\langle\frac{\alpha}{2}u+p,\tilde{z}_h-z\right\rangle_{L^2(\Omega_h)}+
  \frac{\alpha}{2}\langle u_h-u,\tilde{z}_h-z\rangle_{L^2(\Omega_h)}.
\end{equation*}
Then (\ref{eqn:error estimat1}) can be rewritten as
\begin{equation}\label{error estimat12}
  \begin{aligned}
 \frac{\alpha}{2}\|z-z_h\|^2_{L^2(\Omega_h)}+\frac{\alpha}{2}\|u-u_h\|^2_{L^2(\Omega_h)}&\leq
 \begin{array}{c}
 \underbrace{\langle p_h-p,\tilde{z}_h-z_h\rangle_{L^2(\Omega_h)}}\\
 I_4
 \end{array}
+\begin{array}{c}
 \underbrace{\left\langle\frac{\alpha}{2}u+p,\tilde{z}_h-z\right\rangle_{L^2(\Omega_h)}}\\
I_5
\end{array}\\
  &+
  \begin{array}{c}
 \underbrace{\frac{\alpha}{2}\langle u_h-u,\tilde{z}_h-z\rangle_{L^2(\Omega_h)}}\\
 I_6
 \end{array}+ch^2.
   \end{aligned}
\end{equation}
For the term $I_4$, let $\tilde{p}_h=\mathcal{S}^*_h(y-y_d)$, we have
\begin{align*}
  I_4&=\langle p_h-\tilde{p}_h+\tilde{p}_h-p,\tilde{z}_h-z_h\rangle_{L^2(\Omega_h)} \\
     &=-\|y-y_h\|^2_{L^2(\Omega_h)}+\begin{array}{c}
 \underbrace{\langle y_h-y,(\mathcal{S}_h-\mathcal{S})(\tilde{z}_h+y_c)-\mathcal{S}(z-\tilde{z}_h)\rangle_{L^2(\Omega_h)}}\\
 I_7
\end{array}\\
 &\quad +      \begin{array}{c}
 \underbrace{(y-y_d,(\mathcal{S}_h-\mathcal{S})(\tilde{z}_h-z_h))_{L^2(\Omega_h)}}\\
 I_8
\end{array}
.
\end{align*}
Consequently,
\begin{equation}\label{error estimat13}
  \frac{\alpha}{2}\|z-z_h\|^2_{L^2(\Omega_h)}+\frac{\alpha}{2}\|u-u_h\|^2_{L^2(\Omega_h)}+\|y-y_h\|^2_{L^2(\Omega_h)}\leq I_5+I_6+I_7+I_8+ch^2.
\end{equation}

In order to further estimate (\ref{error estimat13}), we will discuss each of these items from $I_5$ to $I_8$ in turn. Firstly, from the regularity of the optimal control $u$, i.e., $u\in H^1(\Omega)$, and (\ref{equ:z-piecepoint form}), we know that
\begin{equation}\label{eqn:exact function estimats1}
  \|u\|_{H^1(\Omega)}\leq \frac{1}{\alpha}\|p\|_{H^1(\Omega)}+\left(\frac{\beta}{\alpha}+a+b\right)\mathcal{M}(\Omega),
\end{equation}
where $\mathcal{M}(\Omega)$ denotes the measure of the $\Omega$. Then we have
\begin{equation*}
  \|\frac{\alpha}{2}u+p\|_{H^1(\Omega)}\leq\frac{3}{2}\|p\|_{H^1(\Omega)}+\frac{1}{2}(\beta+\alpha a+\alpha b)\mathcal{M}(\Omega).
\end{equation*}
Moreover, due to the boundedness of the optimal control $u$, the state $y$, the adjoint state $p$ and the operator $\mathcal{S}$, we can choose a large enough constant $L>0$ independent of $\alpha$, $h$ and a constant $\alpha_0$, such that for all $0<\alpha\leq\alpha_0$ and $h>0$, the following inequation holds:
\begin{equation}\label{eqn:exact function estimats2}
  \frac{3}{2}\|p\|_{H^1(\Omega)}+(\beta+\alpha a+\alpha b)\mathcal{M}(\Omega)+\|y-y_d\|_{L^2(\Omega)}+\|y_c\|_{L^2(\Omega)}+\|\mathcal{S}\|_{\mathcal{L}(H^{-1},L^2)}+\sup\limits_{u_h\in U_{ad,h}}{}\|u_h\|\leq L.
\end{equation}
From (\ref{eqn:exact function estimats2}) and $u=z$, we have $\|z\|_{H^1(\Omega)}\leq \alpha^{-1}L$. Thus, for the term $I_5$, utilizing Lemma \ref{eqn:lemma3}, we have
\begin{align}\label{error estimat14}
  I_5\leq \|\frac{\alpha}{2}u+p\|_{H^1(\Omega_h)}\|\tilde{z}_h-z\|_{H^{-1}(\Omega_h)}\leq c_2 L \|z\|_{H^1(\Omega_h)}h^2 \leq c_2 L^2 \alpha^{-1}h^2.
\end{align}
For terms $I_6$ and $I_7$, using H$\mathrm{\ddot{o}}$lder's inequality, Lemma \ref{eqn:lemma1} and Lemma \ref{eqn:lemma3}, we have
\begin{align}\label{error estimat15}
  I_6&\leq \frac{\alpha}{4}\|u_h-u\|^2_{L^2(\Omega_h)}+\frac{\alpha}{4}\|\tilde{z}_h-z\|^2_{L^2(\Omega_h)}\leq \frac{\alpha}{4}\|u_h-u\|^2_{L^2(\Omega_h)}+\frac{c_2^2L^2\alpha^{-1}}{4}h^2,
\end{align}
and
\begin{equation}\label{error estimat16}
  \begin{aligned}
  I_7&\leq\frac{1}{2}\|y-y_h\|^2_{L^2(\Omega_h)}+2\|\mathcal{S}_h-\mathcal{S}\|^2_{\mathcal{L}(L^{2},L^2)}
  (\|\tilde{z}_h\|^2_{L^2(\Omega_h)}+\|y_c\|^2_{L^2(\Omega_h)})
+\|\mathcal{S}\|_{\mathcal{L}(H^{-1},L^2)}\|z-\tilde{z}_h\|^2_{H^{-1}(\Omega_h)}\\
  &\leq\frac{1}{2}\|y-y_h\|^2_{L^2(\Omega_h)}+2c_1^2L^2h^4+c_2^2L^3\alpha^{-2}h^4.
\end{aligned}
\end{equation}
Finally, about the term $I_8$, we have
\begin{equation}\label{error estimat17}
  \begin{aligned}
  I_8&\leq \|y-y_d\|_{L^2(\Omega_h)}\|\mathcal{S}_h-\mathcal{S}\|_{\mathcal{L}(L^{2},L^2)}(\|\tilde{z}_h-z\|_{L^2(\Omega_h)}+\|z-z_h\|_{L^2(\Omega_h)})\\
  &\leq c_1Lh^2(c_2L\alpha^{-1}h+\|z-z_h\|_{L^2(\Omega_h)})\\
  &\leq \frac{\alpha}{4}\|z-z_h\|^2_{L^2(\Omega_h)}+c_1c_2\alpha^{-1}L^2h^3+4c_1^2L^2\alpha^{-1}h^4.
\end{aligned}
\end{equation}
Substituting (\ref{error estimat14}), (\ref{error estimat15}), (\ref{error estimat16}) and (\ref{error estimat17}) into (\ref{error estimat13}) and rearranging, we get
\begin{align*}
  \frac{\alpha}{2}\|u-u_h\|^2_{L^2(\Omega_h)}+\frac{1}{2}\|y-y_h\|^2_{L^2(\Omega_h)}\leq C(h^2+\alpha^{-1} h^2+\alpha^{-1}h^3+\alpha^{-1}h^4+\alpha^{-2}h^4),
\end{align*}
where $C>0$ is a properly chosen constant. Using again the assumption $|\Omega\backslash\Omega_h|\leq ch^2$, we can get
\begin{align*}
  \frac{\alpha}{2}\|u-u_h\|^2_{L^2(\Omega)}+\frac{1}{2}\|y-y_h\|^2_{L^2(\Omega)}\leq C(h^2+\alpha h^2+\alpha^{-1} h^2+h^3+\alpha^{-1}h^4+\alpha^{-2}h^4).
\end{align*}
\endproof

\begin{corollary}\label{corollary:error1}
Let $(y, u, z)$ be the optimal solution of problem {\rm(\ref{eqn:modified problems})}, and $(y_h, u_h, z_h)$ be the optimal solution of problem {\rm(\ref{equ:approx discretized})}. For every $h_0>0$, $\alpha_0>0$, there is a constant $C>0$ such that for all $0<\alpha\leq\alpha_0$, $0<h\leq h_0$ it holds
\begin{eqnarray*}
  \|u-u_h\|_{L^2(\Omega)}\leq C(\alpha^{-1}h+\alpha^{-\frac{3}{2}}h^2),
\end{eqnarray*}
where $C$ is a constant independent of $h$ and $\alpha$.
\end{corollary}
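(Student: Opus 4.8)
The plan is to read the statement off of Theorem~\ref{theorem:error2} by discarding the (nonnegative) state-error contribution and then doing a little bookkeeping with the powers of $h$ and $\alpha$. Since $\tfrac12\|y-y_h\|^2_{L^2(\Omega)}\ge 0$, Theorem~\ref{theorem:error2} immediately gives, for $h$ small enough and $0<\alpha\le\alpha_0$,
\begin{equation*}
\frac{\alpha}{2}\|u-u_h\|^2_{L^2(\Omega)}\leq C\left(h^2+\alpha h^2+\alpha^{-1} h^2+h^3+\alpha^{-1}h^4+\alpha^{-2}h^4\right),
\end{equation*}
so that after dividing by $\alpha/2$,
\begin{equation*}
\|u-u_h\|^2_{L^2(\Omega)}\leq 2C\left(\alpha^{-1}h^2+h^2+\alpha^{-2} h^2+\alpha^{-1}h^3+\alpha^{-2}h^4+\alpha^{-3}h^4\right).
\end{equation*}

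The second step is to recognise that the right-hand side is, up to a constant depending only on $\alpha_0$, bounded by the expansion of the target quantity,
\begin{equation*}
\bigl(\alpha^{-1}h+\alpha^{-3/2}h^2\bigr)^2=\alpha^{-2}h^2+2\,\alpha^{-5/2}h^3+\alpha^{-3}h^4 .
\end{equation*}
Indeed, using only $\alpha\le\alpha_0$ one has $\alpha^{-1}h^2=\alpha\cdot\alpha^{-2}h^2\le\alpha_0\,\alpha^{-2}h^2$, $h^2\le\alpha_0^2\,\alpha^{-2}h^2$, $\alpha^{-1}h^3=\alpha^{3/2}\cdot\alpha^{-5/2}h^3\le\alpha_0^{3/2}\,\alpha^{-5/2}h^3$, and $\alpha^{-2}h^4=\alpha\cdot\alpha^{-3}h^4\le\alpha_0\,\alpha^{-3}h^4$, while the two remaining summands $\alpha^{-2}h^2$ and $\alpha^{-3}h^4$ already occur on the right. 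Summing these comparisons yields a constant $\widetilde C>0$ (depending on $C$ and $\alpha_0$ but not on $h$ or $\alpha$) with $\|u-u_h\|^2_{L^2(\Omega)}\le\widetilde C\bigl(\alpha^{-2}h^2+\alpha^{-5/2}h^3+\alpha^{-3}h^4\bigr)\le\widetilde C\,\bigl(\alpha^{-1}h+\alpha^{-3/2}h^2\bigr)^2$, and taking square roots gives the assertion with $C=\sqrt{\widetilde C}$.

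I do not expect any genuine obstacle: the corollary is essentially a corollary in the literal sense, resting only on Theorem~\ref{theorem:error2} and the standing hypotheses $\alpha\le\alpha_0$, $h\le h_0$. The single point that requires a moment of care is the monotonicity bookkeeping in $\alpha$ above — namely verifying that each of the ``extra'' terms $h^2$, $\alpha h^2$, $\alpha^{-1}h^2$, $h^3$, $\alpha^{-1}h^4$ produced by Theorem~\ref{theorem:error2} (after division by $\alpha$) is dominated, up to an $\alpha_0$-dependent factor, by one of the three terms $\alpha^{-2}h^2$, $\alpha^{-5/2}h^3$, $\alpha^{-3}h^4$ in the square of $\alpha^{-1}h+\alpha^{-3/2}h^2$. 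Once this is checked, the proof is complete and the resulting rate coincides with the one in Theorem~\ref{theorem:error1}, confirming that the nodal-quadrature approximation does not degrade the order of convergence.
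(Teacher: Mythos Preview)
Your proposal is correct and is precisely the intended derivation: the paper states the corollary without proof, as an immediate consequence of Theorem~\ref{theorem:error2}, and your argument---dropping the state-error term, dividing by $\alpha/2$, and absorbing each resulting term into $(\alpha^{-1}h+\alpha^{-3/2}h^2)^2$ via $\alpha\le\alpha_0$---is exactly the bookkeeping this entails. There is nothing to add.
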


\section{An ihADMM algorithm and two-phase strategy for discretized problems}\label{sec:4} 
In this section, we will introduce an inexact ADMM algorithm and a two-phase strategy for discrete problems. Firstly, in order to establish relations parallel to (\ref{equ:saddle point problems2}) and (\ref{z-closed form solution}) for the discrete problem (\ref{equ:approx discretized}), we propose an inexact heterogeneous ADMM (ihADMM) algorithm with the aim of solving (\ref{equ:approx discretized}) to moderate accuracy.
Furthermore, as we have mentioned, if more accurate solution is necessarily required, combining our ihADMM and the primal-dual active set (PDAS) method is a wise choice. Then a two-phase strategy is introduced. Specifically, utilizing the solution generated by our ihADMM, as a reasonably good initial point, the PDAS method is used as a postprocessor of our ihADMM. 

Firstly, let us define following stiffness and mass matrices:
\begin{eqnarray*}
K_h &=& \left(a(\phi_i, \phi_j)\right)_{i,j=1}^{N_h},\quad
M_h=\left(\int_{\Omega_h}\phi_i\phi_j{\mathrm{d}}x\right)_{i,j=1}^{N_h},
\end{eqnarray*}
where the bilinear form $a(\cdot,\cdot)$ is defined in (\ref{eqn:bilinear form}).

Due to the quadrature formulas (\ref{eqn:approx norm2}) and (\ref{eqn:approx norm1}), a lumped mass matrix
 $ W_h={\rm{diag}}\left(\int_{\Omega_h}\phi_i(x)\mathrm{d}x\right)_{i,j=1}^{N_h}$
is introduced. Moreover, by (\ref{equ:martix properties1}) in Proposition \ref{eqn:martix properties}, we have the following results about the mass matrix $M_h$ and the lump mass matrix $W_h$.

\subsection{An inexact heterogeneous ADMM algorithm}

Denoting by $y_{d,h}:=\sum\limits_{i=1}^{N_h}y_d^i\phi_i(x)$ and $y_{c,h}:=\sum\limits_{i=1}^{N_h}y_c^i\phi_i(x)$ the $L^2$-projection of $y_d$ and $y_c$ onto $Y_h$, respectively, and identifying discretized functions with their coefficient vectors, we can rewrite the problem (\ref{equ:approx discretized}) as a matrix-vector form:
\begin{equation}\label{equ:approx discretized matrix-vector form}
\left\{\begin{aligned}
        &\min\limits_{(y,u,z)\in\mathbb{R}^{3N_h}}^{}~~ \frac{1}{2}\|y-y_d\|_{M_h}^{2}+\frac{\alpha}{4}\|u\|_{M_h}^{2}+\frac{\alpha}{4}\|z\|_{W_h}^{2}+\|W_hz\|_1\\
        &~~~ \quad {\rm{s.t.}}\qquad\quad K_hy=M_h(u+y_c),\\
        &\ \qquad\quad\quad\quad\quad u=z,\\
        &\ \qquad\quad\quad\quad\quad z\in[a,b]^{N_h}.
                          \end{aligned} \right.\tag{$\overline{\mathrm{DP}}_{h}$}
\end{equation}
By Assumption \ref{equ:assumption:1}, we have the stiffness matrix $K_h$ is a symmetric positive definite matrix. Then problem (\ref{equ:approx discretized matrix-vector form}) can be rewritten the following reduced form:
\begin{equation}\label{equ:reduced approx discretized matrix-vector form}
\left\{\begin{aligned}
        &\min\limits_{(u,z)\in\mathbb{R}^{2N_h}}^{}~~ f(u)+g(z)\\
        &~~ \quad {\rm{s.t.}} \quad \qquad u=z.
                          \end{aligned} \right.\tag{$\overline{\mathrm{RDP}}_{h}$}
\end{equation}
where
\begin{eqnarray}
 \label{equ:fu function} f(u)&=& \frac{1}{2}\|K_h^{-1}M_h(u+y_c)-y_d\|_{M_h}^{2}+\frac{\alpha}{4}\|u\|_{M_h}^{2}, \quad
g(z) = \frac{\alpha}{4}\|z\|_{W_h}^{2}+\beta\|W_hz\|_1+\delta_{[a,b]^{N_h}}.
\end{eqnarray}

To solve (\ref{equ:reduced approx discretized matrix-vector form}) by using ADMM-type algorithm, we first introduce the augmented Lagrangian function for (\ref{equ:reduced approx discretized matrix-vector form}). According to three possible choices of norms ($\mathbb{R}^{N_h}$ norm, $W_h$-weighted norm and $M_h$-weighted norm), for the augmented Lagrangian function, there are three versions as follows: for given $\sigma>0$,
\begin{eqnarray}
\mathcal{L}^1_\sigma(u,z;\lambda)&:=&f(u)+g(z)+\langle\lambda,u-z\rangle+\frac{\sigma}{2}\|u-z\|^{2}, \label{aguLarg1}\\
\mathcal{L}^2_\sigma(u,z;\lambda)&:=&f(u)+g(z)+\langle\lambda,M_h(u-z)\rangle+\frac{\sigma}{2}\|u-z\|_{W_h}^{2}, \label{aguLarg2}\\
\mathcal{L}^3_\sigma(u,z;\lambda)&:=&f(u)+g(z)+\langle\lambda,M_h(u-z)\rangle+\frac{\sigma}{2}\|u-z\|_{M_h}^{2}\label{aguLarg3}.
\end{eqnarray}
Then based on these three versions of augmented Lagrangian function, we give the following four versions of ADMM-type algorithm for (\ref{equ:reduced approx discretized matrix-vector form}) at $k$-th ineration: for given $\tau>0$ and $\sigma>0$,

\begin{equation}\label{inexact ADMM1}
  \left\{\begin{aligned}
  &u^{k+1}=\arg\min_u\ f(u)+\langle\lambda^k,u-z^k\rangle+\sigma/2\|u-z^k\|^{2},\\
  &z^{k+1}=\arg\min_z\ g(z)+\langle\lambda^k,u^{k+1}-z\rangle+\sigma/2\|u^{k+1}-z\|^{2},\\
  &\lambda^{k+1}=\lambda^k+\tau\sigma(u^{k+1}-z^{k+1}).
  \end{aligned}\right.\tag{ADMM1}
\end{equation}
\begin{equation}\label{inexact ADMM2}
  ~~~~~~~~ \left\{\begin{aligned}
  &u^{k+1}=\arg\min_u\ f(u)+\langle\lambda^k,M_h(u-z^k)\rangle+\sigma/2\|u-z^k\|_{W_h}^{2},\\
  &z^{k+1}=\arg\min_z\ g(z)+\langle\lambda^k,W_h(u^{k+1}-z)\rangle+\sigma/2\|u^{k+1}-z\|_{W_h}^{2},\\
  &\lambda^{k+1}=\lambda^k+\tau\sigma(u^{k+1}-z^{k+1}).
  \end{aligned}\right.\tag{ADMM2}
\end{equation}
\begin{equation}\label{inexact ADMM3}
  ~~~~~~~~ \left\{\begin{aligned}
  &u^{k+1}=\arg\min_u\ f(u)+\langle\lambda^k,M_h(u-z^k)\rangle+\sigma/2\|u-z^k\|_{M_h}^{2},\\
  &z^{k+1}=\arg\min_z\ g(z)+\langle\lambda^k,M_h(u^{k+1}-z)\rangle+\sigma/2\|u^{k+1}-z\|_{M_h}^{2},\\
  &\lambda^{k+1}=\lambda^k+\tau\sigma(u^{k+1}-z^{k+1}).
  \end{aligned}\right.\tag{ADMM3}
\end{equation}
\begin{equation}\label{inexact ADMM4}
  ~~~~~~~~ \left\{\begin{aligned}
  &u^{k+1}=\arg\min_u\ f(u)+\langle\lambda^k,M_h(u-z^k)\rangle+{\color{blue}\sigma/2\|u-z^k\|_{M_h}^{2}},\\
  &z^{k+1}=\arg\min_z\ g(z)+\langle\lambda^k,M_h(u^{k+1}-z)\rangle+{\color{red}\sigma/2\|u^{k+1}-z\|_{W_h}^{2}},\\
  &\lambda^{k+1}=\lambda^k+\tau\sigma(u^{k+1}-z^{k+1}).
  \end{aligned}\right.\tag{ADMM4}
\end{equation}

As one may know, (\ref{inexact ADMM1}) is actually the classical ADMM for (\ref{equ:reduced approx discretized matrix-vector form}). The remaining three ADMM-type algorithms are proposed based on the structure of (\ref{equ:reduced approx discretized matrix-vector form}). Now, let us start to analyze and compare the advantages and disadvantages of the four algorithms.
Firstly, we focus on the $z$-subproblem in each algorithm. Since both identity matrix $I$ and lumped mass matrix $W_h$ are diagonal, it is clear that all the $z$-subproblems in (\ref{inexact ADMM1}), (\ref{inexact ADMM2}) and (\ref{inexact ADMM4}) have a closed form solution, except for the $z$-subproblem in (\ref{inexact ADMM3}).
Specifically, for $z$-subproblem in (\ref{inexact ADMM1}), the closed form solution could be given by:
\begin{equation}\label{equ:closed form solution for ADMM1}
z^k={\rm\Pi}_{U_{ad}}\left((\frac{\alpha}{2}W_h+\sigma I)^{-1}W_h{\rm soft}(W_h^{-1}(\sigma u^{k+1}+\lambda^k),\beta)\right).
\end{equation}
Similarly, for $z$-subproblems in (\ref{inexact ADMM2}) and (\ref{inexact ADMM4}), the closed form solution could be given by:
\begin{equation}\label{equ:closed form solution for ADMM2 and ADMM4}
z^{k+1}={\rm\Pi}_{U_{ad}}\left(\frac{1}{\sigma+0.5\alpha}{\rm soft}\left(\sigma u^{k+1}+W_h^{-1}M_h\lambda^k{\text{,}}~\beta\right)\right)
\end{equation}
Fortunately, the expression of (\ref{equ:closed form solution for ADMM2 and ADMM4}) is the similar to (\ref{z-closed form solution}). As we have mentioned that, from the view of both the actual numerical implementation and convergence analysis of the algorithm, establishing such parallel relation is important.


Next, let us analyze the structure of $u$-subproblem in each algorithm. For (\ref{inexact ADMM1}), the first subproblem at $k$-th iteration 
is equivalent to solving the following linear system:
\begin{equation}\label{eqn:saddle point1}
\left[
  \begin{array}{ccc}
    M_h & \quad0 & \quad K_h \\
    0 & \quad\frac{\alpha}{2}M_h+\sigma I & \quad-M_h \\
    K_h & \quad-M_h & \quad0 \\
  \end{array}
\right]\left[
         \begin{array}{c}
           y^{k+1} \\
           u^{k+1} \\
           p^{k+1} \\
         \end{array}
       \right]=\left[
                 \begin{array}{c}
                   M_hy_d \\
                   \sigma z^k-\lambda^k \\
                   M_hy_c \\
                 \end{array}
               \right].
\end{equation}
Similarly, the $u$-subproblem in (\ref{inexact ADMM2}) can be converted into the following linear system:
\begin{equation}\label{eqn:saddle point2}
\left[
  \begin{array}{ccc}
    M_h & \quad0 & \quad K_h \\
    0 & \quad\frac{\alpha}{2}M_h+\sigma W_h & \quad-M_h \\
    K_h & \quad-M_h & \quad0 \\
  \end{array}
\right]\left[
         \begin{array}{c}
           y^{k+1} \\
           u^{k+1} \\
           p^{k+1} \\
         \end{array}
       \right]=\left[
                 \begin{array}{c}
                   M_hy_d \\
                   \sigma W_h(z^k-\lambda^k) \\
                   M_hy_c \\
                 \end{array}
               \right].
\end{equation}

However, the $u$-subproblem in both (\ref{inexact ADMM3}) and (\ref{inexact ADMM4}) can be rewritten as:
\begin{equation}\label{eqn:saddle point3}
\left[
  \begin{array}{ccc}
    M_h & \quad0 & \quad K_h \\
    0 & \quad(0.5\alpha+\sigma) M_h  & \quad-M_h \\
    K_h & \quad-M_h & \quad0 \\
  \end{array}
\right]\left[
         \begin{array}{c}
           y^{k+1} \\
           u^{k+1} \\
           p^{k+1} \\
         \end{array}
       \right]=\left[
                 \begin{array}{c}
                   M_hy_d \\
                   M_h(\sigma z^k-\lambda^k) \\
                   M_hy_c \\
                 \end{array}
               \right].
\end{equation}
In (\ref{eqn:saddle point3}), since $p^{k+1}=(0.5\alpha+\sigma)u^{k+1}-\sigma z^k+\lambda^k$, it is obvious that (\ref{eqn:saddle point3}) can be reduced into the following system by eliminating the variable $p$ without any computational cost:
\begin{equation}\label{eqn:saddle point4}
\left[
  \begin{array}{cc}
    \frac{1}{0.5\alpha+\sigma}M_h & K_h \\
    -K_h & M_h
  \end{array}
\right]\left[
         \begin{array}{c}
           y^{k+1} \\
           u^{k+1}
         \end{array}
       \right]=\left[
                 \begin{array}{c}
                   \frac{1}{0.5\alpha+\sigma}(K_h(\sigma z^k-\lambda^k)+M_hy_d)\\
                   -M_hy_c
                 \end{array}
               \right],
\end{equation}
while, reduced forms of (\ref{eqn:saddle point1}) and (\ref{eqn:saddle point2}):
both involve the inversion of $M_h$.

For above mentioned reasons, we prefer to use (\ref{inexact ADMM4}), which is called the heterogeneous ADMM (hADMM). However, in general, it is expensive and unnecessary to exactly compute the solution of saddle point system (\ref{eqn:saddle point4}) even if it is doable, especially at the early stage of the whole process. Based on the structure of (\ref{eqn:saddle point4}), it is a natural idea to use the iterative methods such as some Krylov-based methods. Hence, taking the inexactness of the solution of $u$-subproblem into account, a more practical inexact heterogeneous ADMM (ihADMM) algorithm is proposed.

Due to the inexactness of the proposed algorithm, we first introduce an error tolerance. Throughout this paper, let $\{\epsilon_k\}$ be a summable sequence of nonnegative numbers, and define
\begin{equation}\label{error sequence}
  C_1:=\sum\limits^{\infty}_{k=0}\epsilon_k\leq\infty, \quad C_2:=\sum\limits^{\infty}_{k=0}\epsilon_k^2\leq\infty.
\end{equation}
The details of our ihADMM algorithm is shown in Algorithm \ref{algo4:inexact heterogeneous ADMM for problem RHP} to solve (\ref{equ:approx discretized matrix-vector form}).

\begin{algorithm}[H]
  \caption{inexact heterogeneous ADMM algorithm for (\ref{equ:approx discretized matrix-vector form})}\label{algo4:inexact heterogeneous ADMM for problem RHP}
  \textbf{Input}: {$(z^0, u^0, \lambda^0)\in {\rm dom} (\delta_{[a,b]}(\cdot))\times \mathbb{R}^n \times \mathbb{R}^n $ and parameters $\sigma>0$, $\tau>0$. 
  Set $k=1$.}\\
  \textbf{Output}: {$ u^k, z^{k}, \lambda^k$}
\begin{description}
\item[Step 1] Find an minizer (inexact)
\begin{eqnarray*}
  u^{k+1}&=&\arg\min f(u)+(M_h\lambda^k,u-z^k)
         +\frac{\sigma}{2}\|u-z^k\|_{M_h}^{2}-\langle\delta^k, u\rangle,
\end{eqnarray*}
where the error vector ${\delta}^k$ satisfies $\|{\delta}^k\|_{2} \leq {\epsilon_k}$
\item[Step 2] Compute $z^k$ as follows:
       \begin{eqnarray*}
       z^{k+1}&=&\arg\min g(z)+(M_h\lambda^k,u^{k+1}-z)
         +\frac{\sigma}{2}\|u^{k+1}-z\|_{W_h}^{2}
       \end{eqnarray*}
  \item[Step 3] Compute
  \begin{eqnarray*}
    \lambda^{k+1} &=& \lambda^k+\tau\sigma(u^{k+1}-z^{k+1}).
  \end{eqnarray*}

  \item[Step 4] If a termination criterion is not met, set $k:=k+1$ and go to Step 1
\end{description}
\end{algorithm}
\subsection{Convergence results of ihADMM}
For the ihADMM (Algorithm \ref{algo4:inexact heterogeneous ADMM for problem RHP}), in this section we establish the global convergence and the iteration complexity results in non-ergodic sense for the sequence generated by Algorithm \ref{algo4:inexact heterogeneous ADMM for problem RHP}.

Before giving the proof of Theorem \ref{discrete convergence results}, we first provide a lemma, which is useful for analyzing the non-ergodic iteration complexity of ihADMM and introduced in \cite{SunToh1}.
\begin{lemma}\label{complexity lemma}
If a sequence $\{a_i\}\in \mathbb{R}$ satisfies the following conditions:
\begin{eqnarray*}
  &&a_i\geq0 \ \text{for any}\ i\geq0\quad and\quad \sum\limits_{i=0}^{\infty}a_i=\bar a<\infty.
\end{eqnarray*}
Then we have $\min\limits^{}_{i=1,...,k}\{a_i\} \leq \frac{\bar a}{k}$, and $\lim\limits^{}_{k\rightarrow\infty} \{k\cdot\min\limits^{}_{i=1,...,k}\{a_i\}\} =0$.
\end{lemma}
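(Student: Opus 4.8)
The plan is to establish the two assertions in turn, both resting on the elementary observation that the minimum of finitely many nonnegative numbers is no larger than their average.

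For the inequality $\min_{1\le i\le k}\{a_i\}\le \bar a/k$, I would argue as follows. Since $a_i\ge 0$ for all $i$, the partial sums are monotone and bounded by the total, so $\sum_{i=1}^{k}a_i\le\sum_{i=0}^{\infty}a_i=\bar a$. At the same time, each term in this partial sum dominates the minimum, giving $k\,\min_{1\le i\le k}\{a_i\}\le\sum_{i=1}^{k}a_i$. Chaining the two bounds yields $\min_{1\le i\le k}\{a_i\}\le \bar a/k$. This part is entirely routine.

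For the limit, set $b_k:=\min_{1\le i\le k}\{a_i\}$ --- a non-increasing sequence of nonnegative numbers --- and write $r_m:=\sum_{i=m+1}^{\infty}a_i$ for the tail of the series, so that $r_m\to 0$ as $m\to\infty$ because $\sum a_i$ converges. The key step is to discard the first $m$ terms before averaging: for any fixed $m$ and any $k>m$,
\[
b_k\;\le\;\min_{m<i\le k}\{a_i\}\;\le\;\frac{1}{k-m}\sum_{i=m+1}^{k}a_i\;\le\;\frac{r_m}{k-m}.
\]
Multiplying through by $k$ gives $0\le k\,b_k\le \tfrac{k}{k-m}\,r_m$, and letting $k\to\infty$ with $m$ held fixed produces $\limsup_{k\to\infty}k\,b_k\le r_m$. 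Since the left-hand side does not depend on $m$ while $r_m\to 0$, we conclude $\limsup_{k\to\infty}k\,b_k=0$, and together with $k\,b_k\ge 0$ this forces $\lim_{k\to\infty}k\,b_k=0$, which is the second claim.

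The argument presents no genuine difficulty; the only subtlety I would be careful about is the order of limits in the second part: one must first pass to a small tail $r_m$ of the series and only afterwards send $k\to\infty$, since the naive bound $k\,b_k\le k\cdot(\bar a/k)=\bar a$ coming from the first part is too crude to give the conclusion. I would display the chained inequality above to keep the logic transparent.
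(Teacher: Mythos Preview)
Your argument is correct. The first assertion is exactly the pigeonhole-type bound you wrote, and for the second you correctly observe that the naive estimate $k\,b_k\le \bar a$ is useless and instead split off a tail $r_m$ before averaging; the resulting chain $k\,b_k\le \tfrac{k}{k-m}\,r_m$ and the two-step limit (first $k\to\infty$, then $m\to\infty$) are carried out cleanly.

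As for comparison with the paper: the paper does not prove this lemma at all. It is stated without proof and attributed to \cite{SunToh1} (Chen--Sun--Toh), where it appears as an auxiliary tool for non-ergodic iteration-complexity analysis. Your self-contained elementary proof is therefore more than the paper itself supplies, and it matches the standard argument one finds in that reference.
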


For the convenience of the iteration complexity analysis in below, we define the function $R_h: (u,z,\lambda)\rightarrow [0,\infty)$ by:
\begin{equation}\label{discrete KKT function}
  R_h(u,z,\lambda)=\|M_h\lambda+\nabla f(u)\|^2+{\rm dist}^2(0, -M_h\lambda+\partial g(z))+\|u-z\|^2.
\end{equation}

By the definitions of $f(u)$ and $g(z)$ in (\ref{equ:fu function}), it is obvious that $f(u)$ and $g(z)$ both are closed, proper and convex functions. Since $M_h$ and $K_h$ are symmetric positive definite matrixes, we know the gradient operator $\nabla f$ is strongly monotone, and we have
 \begin{equation}\label{subdifferential strongly monotone}
   \langle\nabla f(u_1)-\nabla f(u_2), u_1-u_2\rangle=\|u_1-u_2\|^2_{\Sigma_{f}},
   \end{equation}
where ${\Sigma_{f}}=\frac{\alpha}{2} M_h+M_hK_h^{-1}M_hK_h^{-1}M_h$ is symmetric positive definite. Moreover, the subdifferential operator $\partial g$ is a maximal monotone operators, e.g.,
 \begin{equation}\label{subdifferential monotone}
   \langle\varphi_1-\varphi_2, z_1-z_2\rangle\geq\frac{\alpha}{2}\|z_1-z_2\|^2_{ W_h} \quad \forall\ \varphi_1\in\partial g(z_1),\ \varphi_2\in \partial g(z_2).
 \end{equation}
For the subsequent convergence analysis, we denote
\begin{eqnarray}
  \label{equ:exact u}\bar{u}^{k+1}&:=&\arg\min f(u)+\langle M_h\lambda^k,u-z^k\rangle
         +\frac{\sigma}{2}\|u-z^k\|_{M_h}^{2},\\
  \label{equ:exact z} \bar z^{k+1}&:=&{\rm\Pi}_{U_{ad}}\left(\frac{1}{\sigma+0.5\alpha}{\rm soft}\left(\sigma \bar u^{k+1}+W_h^{-1}M_h\lambda^k{\text{,}}~\beta\right)\right),
\end{eqnarray}
which are the exact solutions at the $(k+1)$-th iteration in Algorithm \ref{algo4:inexact heterogeneous ADMM for problem RHP}. The following results show the gap between $(u^{k+1}, z^{k+1})$ and $(\bar u^{k+1}, \bar z^{k+1})$ in terms of the given error tolerance $\|{\delta}^k\|_{2} \leq {\epsilon_k}$.

\begin{lemma}\label{gap between exact and inexact solution}
  Let $\{(u^{k+1}, z^{k+1})\}$ be the squence generated by Algorithm {\rm\ref{algo4:inexact heterogeneous ADMM for problem RHP}}, and $\{\bar u^{k+1}\}$, $\{\bar z^{k+1}\}$ be defined in {\rm(\ref{equ:exact u})} and {\rm(\ref{equ:exact z})}. Then for any $k\geq0$, we have
  \begin{eqnarray}
    \label{equ:error u}\|u^{k+1}-\bar u^{k+1}\| &=&\|(\sigma M_h+\Sigma_f)^{-1}\delta^k\|\leq \rho\epsilon_k,  \\
    \label{equ:error z}\|z^{k+1}-\bar z^{k+1}\| &\leq&\|u^{k+1}-\bar u^{k+1}\|\leq \frac{\rho\sigma}{\sigma+0.5\alpha}\epsilon_k,
  \end{eqnarray}
where $\rho:=\|(\sigma M_h+\Sigma_f)^{-1}\|$.
\end{lemma}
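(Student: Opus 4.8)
The plan is to derive each bound from the optimality conditions of the corresponding subproblems and then exploit the strong convexity/monotonicity inequalities (\ref{subdifferential strongly monotone})--(\ref{subdifferential monotone}) together with the nonexpansiveness of the soft-thresholding and projection operators. First I would write down the first-order optimality condition for the inexact $u$-subproblem in Step 1 of Algorithm \ref{algo4:inexact heterogeneous ADMM for problem RHP}: since the objective there is $f(u)+\langle M_h\lambda^k,u-z^k\rangle+\frac{\sigma}{2}\|u-z^k\|_{M_h}^2-\langle\delta^k,u\rangle$, setting its gradient to zero gives
\begin{equation*}
\nabla f(u^{k+1})+M_h\lambda^k+\sigma M_h(u^{k+1}-z^k)-\delta^k=0,
\end{equation*}
whereas $\bar u^{k+1}$ satisfies the same relation with $\delta^k=0$. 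Subtracting and using $\nabla f(u)=\Sigma_f u + (\text{affine part})$ from (\ref{equ:fu function}) (so that $\nabla f(u^{k+1})-\nabla f(\bar u^{k+1})=\Sigma_f(u^{k+1}-\bar u^{k+1})$) yields $(\sigma M_h+\Sigma_f)(u^{k+1}-\bar u^{k+1})=\delta^k$, which is exactly the equality in (\ref{equ:error u}); the bound then follows from $\|\delta^k\|\le\epsilon_k$ and the definition $\rho=\|(\sigma M_h+\Sigma_f)^{-1}\|$.

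For (\ref{equ:error z}), I would use the closed-form expressions: $z^{k+1}$ is given by (\ref{equ:closed form solution for ADMM2 and ADMM4}) evaluated at $u^{k+1}$, and $\bar z^{k+1}$ by (\ref{equ:exact z}) which is the same map evaluated at $\bar u^{k+1}$. Since the projection $\Pi_{U_{ad}}$ onto a box and the soft-thresholding operator $\mathrm{soft}(\cdot,\beta)$ are both $1$-Lipschitz (nonexpansive) in the Euclidean norm, and the prefactor $\frac{1}{\sigma+0.5\alpha}$ is a positive scalar less than... well, I would simply track the composition:
\begin{equation*}
\|z^{k+1}-\bar z^{k+1}\|\le\frac{1}{\sigma+0.5\alpha}\,\|\sigma(u^{k+1}-\bar u^{k+1})\|=\frac{\sigma}{\sigma+0.5\alpha}\|u^{k+1}-\bar u^{k+1}\|\le\frac{\rho\sigma}{\sigma+0.5\alpha}\epsilon_k,
\end{equation*}
where the middle term $W_h^{-1}M_h\lambda^k$ cancels since it is identical in both arguments. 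The claimed intermediate inequality $\|z^{k+1}-\bar z^{k+1}\|\le\|u^{k+1}-\bar u^{k+1}\|$ is then immediate because $\frac{\sigma}{\sigma+0.5\alpha}<1$.

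The only genuinely delicate point is justifying that $\nabla f$ decomposes so that the difference $\nabla f(u^{k+1})-\nabla f(\bar u^{k+1})$ equals $\Sigma_f$ applied to the increment with no remainder; this is where I would invoke (\ref{subdifferential strongly monotone}), noting that $f$ in (\ref{equ:fu function}) is a quadratic with Hessian $\Sigma_f=\frac{\alpha}{2}M_h+M_hK_h^{-1}M_hK_h^{-1}M_h$, so the decomposition is exact rather than merely an inequality. A secondary technical check is that $\sigma M_h+\Sigma_f$ is invertible and symmetric positive definite, which holds since $M_h\succ0$, $\sigma>0$, and $\Sigma_f\succ0$; hence $\rho$ is well-defined and finite. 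Everything else is routine composition of nonexpansive maps, so no serious obstacle is expected.
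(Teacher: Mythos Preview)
The paper states this lemma without proof, so there is nothing to compare against directly; your argument is correct and is exactly the natural one the authors presumably had in mind: subtract the first-order conditions for $u^{k+1}$ and $\bar u^{k+1}$, use that $f$ is quadratic with Hessian $\Sigma_f$ to obtain the exact equality $(\sigma M_h+\Sigma_f)(u^{k+1}-\bar u^{k+1})=\delta^k$, and then chain the nonexpansiveness of $\Pi_{U_{ad}}$ and $\mathrm{soft}(\cdot,\beta)$ for the $z$-estimate.

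One small remark worth flagging: the displayed chain in (\ref{equ:error z}), read literally, asserts $\|u^{k+1}-\bar u^{k+1}\|\le\frac{\rho\sigma}{\sigma+0.5\alpha}\epsilon_k$, which is strictly stronger than the bound $\rho\epsilon_k$ you (correctly) obtain in (\ref{equ:error u}) and does not follow from your argument. What your computation actually yields is
\[
\|z^{k+1}-\bar z^{k+1}\|\le\frac{\sigma}{\sigma+0.5\alpha}\|u^{k+1}-\bar u^{k+1}\|\le\frac{\rho\sigma}{\sigma+0.5\alpha}\epsilon_k,
\]
together with the separate consequence $\|z^{k+1}-\bar z^{k+1}\|\le\|u^{k+1}-\bar u^{k+1}\|$ since $\sigma/(\sigma+0.5\alpha)<1$. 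This is almost certainly a typographical slip in the lemma statement (the middle term should carry the factor $\frac{\sigma}{\sigma+0.5\alpha}$), not a defect in your proof; the downstream use of the lemma in (\ref{descent theta and bartheta}) only needs $\|z^{k+1}-\bar z^{k+1}\|\le\rho\epsilon_k$ and $\|u^{k+1}-\bar u^{k+1}\|\le\rho\epsilon_k$, both of which you establish.
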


Next, for $k\geq0$, we define
\begin{eqnarray*}
  &r^k=u^k-z^k,\quad \bar r^k=\bar u^k-\bar z^k&  \\
  &\tilde{\lambda }^{k+1}=\lambda^k+\sigma r^{k+1},\quad \bar{\lambda }^{k+1}=\lambda^k+\tau\sigma \bar r^{k+1}, \quad \hat{\lambda }^{k+1}=\lambda^k+\sigma \bar r^{k+1},&
\end{eqnarray*}
and give two inequalities which is essential for establishing both the global convergence and the iteration complexity of our ihADMM

\begin{proposition}\label{descent proposition}
Let $\{(u^{k}, z^{k}, \lambda^{k})\}$ be the sequence generated by Algorithm {\rm\ref{algo4:inexact heterogeneous ADMM for problem RHP}} and $(u^{*}, z^{*}, \lambda^{*})$ be the KKT point of problem {\rm(\ref{equ:reduced approx discretized matrix-vector form})}. Then for $k\geq0$ we have
\begin{equation}\label{inequlaities property1}
  \begin{aligned}
  &\langle\delta^k,u^{k+1}-u^*\rangle +\frac{1}{2\tau\sigma}\|\lambda^k-\lambda^*\|^2_{M_h}+\frac{\sigma}{2}\|z^k-z^*\|^2_{M_h}
  -\frac{1}{2\tau\sigma}\|\lambda^{k+1}-\lambda^*\|^2_{M_h}-\frac{\sigma}{2}\|z^{k+1}-z^*\|^2_{M_h}\\
  &\geq\|u^{k+1}-u^*\|^2_{T}
  +\frac{\sigma}{2}\|z^{k+1}-z^*\|^2_{2W_h-M_h} +\frac{\sigma}{2}\|r^{k+1}\|^2_{W_h-\tau M_h}
  +\frac{\sigma}{2}\|u^{k+1}-z^k\|^2_{M_h},
  \end{aligned}
\end{equation}
where $T:=\Sigma_f-\frac{\sigma}{2}(W_h-M_h)$.
\end{proposition}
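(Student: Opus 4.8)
The plan is to derive the inequality (\ref{inequlaities property1}) by combining the optimality conditions of the two subproblems of Algorithm~\ref{algo4:inexact heterogeneous ADMM for problem RHP} with the monotonicity properties (\ref{subdifferential strongly monotone}) and (\ref{subdifferential monotone}), and then to reorganize the resulting cross terms into the squared-norm quantities on the right-hand side. First I would write down the first-order optimality condition for the inexact $u$-subproblem in Step~1, namely $\nabla f(u^{k+1}) + M_h\lambda^k + \sigma M_h(u^{k+1}-z^k) = \delta^k$, and the optimality condition for the $z$-subproblem in Step~2, namely $0 \in \partial g(z^{k+1}) - M_h\lambda^k - \sigma W_h(u^{k+1}-z^{k+1})$; using $\lambda^{k+1}=\lambda^k+\tau\sigma r^{k+1}$ I would re-express the latter multiplier term in terms of $\lambda^{k+1}$. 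At the same time I would invoke the KKT conditions of (\ref{equ:reduced approx discretized matrix-vector form}): $\nabla f(u^*)+M_h\lambda^*=0$, $M_h\lambda^*\in\partial g(z^*)$, and $u^*=z^*$.

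Next I would test the $u$-equation against $u^{k+1}-u^*$ and the $z$-inclusion against $z^{k+1}-z^*$, subtract the corresponding optimality relations at the KKT point, and use (\ref{subdifferential strongly monotone}) to bound $\langle\nabla f(u^{k+1})-\nabla f(u^*),u^{k+1}-u^*\rangle = \|u^{k+1}-u^*\|^2_{\Sigma_f}$ and (\ref{subdifferential monotone}) to bound the analogous $g$-term below by $\frac{\alpha}{2}\|z^{k+1}-z^*\|^2_{W_h}$. Adding the two tested identities produces, on one side, the inner product $\langle\delta^k,u^{k+1}-u^*\rangle$ together with terms of the form $\langle M_h(\lambda^k-\lambda^*),u^{k+1}-z^{k+1}\rangle$, $\sigma\langle M_h(u^{k+1}-z^k),u^{k+1}-u^*\rangle$ and $\sigma\langle W_h(u^{k+1}-z^{k+1}),z^{k+1}-z^*\rangle$, and on the other side the two monotonicity lower bounds. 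The heart of the argument is then the algebraic identity manipulation: I would use $u^*=z^*$, the multiplier update to rewrite $\lambda^k-\lambda^* = (\lambda^{k+1}-\lambda^*)-\tau\sigma r^{k+1}$, and the ``three-point'' identities $2\langle a-b,a-c\rangle = \|a-b\|^2+\|a-c\|^2-\|b-c\|^2$ (in the $M_h$- and $W_h$-weighted inner products) to convert every cross term into a telescoping difference of the Lyapunov quantities $\frac{1}{2\tau\sigma}\|\lambda^k-\lambda^*\|^2_{M_h}+\frac{\sigma}{2}\|z^k-z^*\|^2_{M_h}$ plus the leftover squared terms $\|u^{k+1}-u^*\|^2_{T}$, $\frac{\sigma}{2}\|z^{k+1}-z^*\|^2_{2W_h-M_h}$, $\frac{\sigma}{2}\|r^{k+1}\|^2_{W_h-\tau M_h}$ and $\frac{\sigma}{2}\|u^{k+1}-z^k\|^2_{M_h}$, with $T=\Sigma_f-\frac{\sigma}{2}(W_h-M_h)$ emerging exactly from collecting the $\Sigma_f$ contribution and the part of the $M_h$-weighted expansion not absorbed into the $z$-norms.

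The main obstacle I anticipate is the bookkeeping in the last step: because the $u$-subproblem uses the $M_h$-weighted proximal term while the $z$-subproblem uses the $W_h$-weighted one, the cross terms live in two different weighted inner products, and several quantities (such as $\|z^{k+1}-z^k\|^2$) must be rewritten consistently — e.g.\ via $u^{k+1}-z^k = (u^{k+1}-z^{k+1})+(z^{k+1}-z^k)$ — so that the $W_h$ and $M_h$ pieces separate cleanly into $2W_h-M_h$, $W_h-\tau M_h$ and $M_h$ weights without any residual indefinite term. A secondary technical point is that $\frac{\alpha}{2}\|z^{k+1}-z^*\|^2_{W_h}$ from (\ref{subdifferential monotone}) must be combined with part of the $M_h$-weighted three-point expansion to yield the $\frac{\sigma}{2}\|z^{k+1}-z^*\|^2_{2W_h-M_h}$ term, which is where the heterogeneous choice of weights pays off. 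Once the cross terms are matched, moving the telescoping differences to the left and the squared terms to the right gives precisely (\ref{inequlaities property1}).
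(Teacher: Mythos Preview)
Your plan is the correct one and matches the standard route for this kind of descent inequality; note, however, that the paper itself states Proposition~\ref{descent proposition} without proof, so there is no line-by-line comparison to make. The optimality conditions you write down are exactly the ones the paper records later as (\ref{equ: inexact variational inequalities1})--(\ref{equ: inexact variational inequalities2}) inside the proof of Theorem~\ref{discrete convergence results}, and your use of (\ref{subdifferential strongly monotone}), (\ref{subdifferential monotone}), the multiplier update, and the weighted three-point identities is precisely what is needed.

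One point in your outline deserves sharpening. You say that the strong-monotonicity contribution $\frac{\alpha}{2}\|z^{k+1}-z^*\|^2_{W_h}$ ``must be combined with part of the $M_h$-weighted three-point expansion to yield the $\frac{\sigma}{2}\|z^{k+1}-z^*\|^2_{2W_h-M_h}$ term''. In fact the combination is not exact: after all the three-point identities are applied you obtain
\[
\frac{\sigma+\alpha}{2}\,\|z^{k+1}-z^*\|^2_{W_h}-\frac{\sigma}{2}\,\|z^{k+1}-z^*\|^2_{M_h}
=\frac{\sigma}{2}\,\|z^{k+1}-z^*\|^2_{2W_h-M_h}+\frac{\alpha-\sigma}{2}\,\|z^{k+1}-z^*\|^2_{W_h},
\]
so there is a residual term $\frac{\alpha-\sigma}{2}\|z^{k+1}-z^*\|^2_{W_h}$ that you must drop. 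This is nonnegative precisely when $\alpha\ge\sigma$, which is guaranteed by the standing assumption $\sigma\in(0,\tfrac{1}{4}\alpha]$ used in Theorem~\ref{discrete convergence results}; you should state this explicitly rather than suggest the terms match identically. With that clarification, your argument goes through and produces (\ref{inequlaities property1}) as claimed.
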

\begin{proposition}\label{descent proposition2}
Let $\{(u^{k}, z^{k}, \lambda^{k})\}$ be the sequence generated by Algorithm {\rm\ref{algo4:inexact heterogeneous ADMM for problem RHP}}, $(u^{*}, z^{*}, \lambda^{*})$ be the KKT point of the problem {\rm(\ref{equ:reduced approx discretized matrix-vector form})} and $\{\bar u^k\}$ and $\{\bar z^k\}$ be two sequences defined in {\rm(\ref{equ:exact u})} and {\rm(\ref{equ:exact z})}, respectively. Then for $k\geq0$ we have
\begin{equation}\label{inequlaities property}
  \begin{aligned}
 &\frac{1}{2\tau\sigma}\|\lambda^k-\lambda^*\|^2_{M_h}+\frac{\sigma}{2}\|z^k-z^*\|^2_{M_h}
  -\frac{1}{2\tau\sigma}\|\bar \lambda^{k+1}-\lambda^*\|^2_{M_h}-\frac{\sigma}{2}\|\bar z^{k+1}-z^*\|^2_{M_h}\\
  \geq\ &\|\bar u^{k+1}-u^*\|^2_{T}
  +\frac{\sigma}{2}\|\bar z^{k+1}-z^*\|^2_{2W_h-M_h} +\frac{\sigma}{2}\|\bar r^{k+1}\|^2_{W_h-\tau M_h}+\frac{\sigma}{2}\|\bar u^{k+1}-z^k\|^2_{M_h},
  \end{aligned}
\end{equation}
where $T:=\Sigma_f-\frac{\sigma}{2}(W_h-M_h)$.
\end{proposition}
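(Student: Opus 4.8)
The plan is to exploit that Proposition \ref{descent proposition2} is the error-free instance of the one-step-progress inequality: the triple $(\bar u^{k+1},\bar z^{k+1},\bar\lambda^{k+1})$ is by construction the \emph{exact} ihADMM step issued from the current pair $(z^{k},\lambda^{k})$, so (\ref{inequlaities property}) is exactly what (\ref{inequlaities property1}) of Proposition \ref{descent proposition} would read with $\delta^{k}=0$. I would therefore rerun, with $\delta^{k}=0$ and every iterate decorated with a bar, the one-step-progress computation behind Proposition \ref{descent proposition}, and in fact organise the argument so that the two propositions are proved at once, the only difference being whether the term $\langle\delta^{k},u^{k+1}-u^{*}\rangle$ is carried along (it only shifts the $u$-relation by $\delta^{k}$).

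First I would record the first-order optimality conditions of the two exact subproblems (\ref{equ:exact u})--(\ref{equ:exact z}); since $f$ is strongly convex and $g$ is proper closed convex these conditions are necessary and sufficient, namely
\begin{equation*}
 \nabla f(\bar u^{k+1})+M_{h}\lambda^{k}+\sigma M_{h}(\bar u^{k+1}-z^{k})=0,\qquad
 \xi^{k+1}-M_{h}\lambda^{k}-\sigma W_{h}(\bar u^{k+1}-\bar z^{k+1})=0
\end{equation*}
for some subgradient $\xi^{k+1}\in\partial g(\bar z^{k+1})$. I would also write the KKT system of (\ref{equ:reduced approx discretized matrix-vector form}) in the ihADMM scaling as $\nabla f(u^{*})+M_{h}\lambda^{*}=0$, $M_{h}\lambda^{*}\in\partial g(z^{*})$, $u^{*}=z^{*}$, and keep the auxiliary multipliers $\hat\lambda^{k+1}=\lambda^{k}+\sigma\bar r^{k+1}$, $\bar\lambda^{k+1}=\lambda^{k}+\tau\sigma\bar r^{k+1}$ at hand. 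Subtracting the KKT identities from the subproblem identities and pairing the $u$-relation with $\bar u^{k+1}-u^{*}$ and the $z$-relation with $\bar z^{k+1}-z^{*}$, the strong monotonicity of $\nabla f$ in (\ref{subdifferential strongly monotone}) produces $\|\bar u^{k+1}-u^{*}\|_{\Sigma_{f}}^{2}$ on the left of the first relation, while the monotonicity of $\partial g$ in (\ref{subdifferential monotone}) bounds the left of the second from below by $\frac{\alpha}{2}\|\bar z^{k+1}-z^{*}\|_{W_{h}}^{2}$. Adding the two and using $u^{*}=z^{*}$ (so that the two test vectors differ exactly by $\bar r^{k+1}$) leaves an inequality whose right-hand side is a sum of inner products of $\lambda^{k}-\lambda^{*}$, $\bar u^{k+1}-z^{k}$ and $\bar r^{k+1}$ against $\bar u^{k+1}-u^{*}$ and $\bar z^{k+1}-z^{*}$.

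The remainder is purely algebraic. I would substitute $\bar r^{k+1}=(\bar\lambda^{k+1}-\lambda^{k})/(\tau\sigma)$ in the multiplier cross term and expand every remaining inner product with the polarization identity $2\langle Ga,b\rangle=\|a+b\|_{G}^{2}-\|a\|_{G}^{2}-\|b\|_{G}^{2}$ for the relevant symmetric positive (semi)definite weight $G\in\{M_{h},W_{h}\}$. The $M_{h}$-weighted $\lambda$- and $z$-pieces telescope to reproduce precisely the left-hand side of (\ref{inequlaities property}); the residual pieces are then reassembled, repeatedly using the decomposition $W_{h}=M_{h}+(W_{h}-M_{h})$ with $W_{h}-M_{h}\succeq0$ (Proposition \ref{eqn:martix properties}) to pass between the two metrics, so as to collect the coefficient $T=\Sigma_{f}-\frac{\sigma}{2}(W_{h}-M_{h})$ on $\|\bar u^{k+1}-u^{*}\|^{2}$, $\frac{\sigma}{2}(2W_{h}-M_{h})$ on $\|\bar z^{k+1}-z^{*}\|^{2}$, $\frac{\sigma}{2}(W_{h}-\tau M_{h})$ on $\|\bar r^{k+1}\|^{2}$ and $\frac{\sigma}{2}M_{h}$ on $\|\bar u^{k+1}-z^{k}\|^{2}$; the spectral bounds $M_{h}\preceq W_{h}\preceq cM_{h}$ of Proposition \ref{eqn:martix properties} are what make these four weights nonnegative definite in the admissible range of $\sigma$ and $\tau$, so that each term on the right is genuinely $\ge0$.

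I expect the main obstacle to be exactly this heterogeneous bookkeeping: because the $u$-penalty carries the weight $M_{h}$ while the $z$-penalty carries $W_{h}$ and the multiplier update lives in the $M_{h}$-metric, the residual $\bar r^{k+1}$ and all the cross terms surface in mismatched inner products, and one has to expand and regroup them with careful sign tracking to see that the surplus collapses into \emph{exactly} the stated weighted combination $T$, $\frac{\sigma}{2}(2W_{h}-M_{h})$, $\frac{\sigma}{2}(W_{h}-\tau M_{h})$, $\frac{\sigma}{2}M_{h}$ rather than merely something of the right order. Once (\ref{inequlaities property}) is established this way, carrying the extra term $\langle\delta^{k},u^{k+1}-u^{*}\rangle$ through the identical computation yields Proposition \ref{descent proposition}, and Lemma \ref{gap between exact and inexact solution} can be invoked afterwards to compare the two.
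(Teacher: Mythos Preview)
Your proposal is correct and follows the natural route: the paper does not spell out a proof of Proposition~\ref{descent proposition2} (nor of Proposition~\ref{descent proposition}), but the structure you outline---write the exact subproblem optimality conditions, subtract the KKT relations, pair with $\bar u^{k+1}-u^*$ and $\bar z^{k+1}-z^*$, invoke (\ref{subdifferential strongly monotone})--(\ref{subdifferential monotone}), then polarize and regroup using $W_h=M_h+(W_h-M_h)$---is exactly the argument the paper presupposes, and your observation that (\ref{inequlaities property}) is nothing but (\ref{inequlaities property1}) with $\delta^k=0$ is precisely the relationship between the two propositions. One small remark: the strong monotonicity of $\partial g$ in (\ref{subdifferential monotone}) yields an extra $\tfrac{\alpha}{2}\|\bar z^{k+1}-z^*\|_{W_h}^2$ which does not appear in the stated right-hand side, so you will simply discard it (plain monotonicity $\ge 0$ is all that is needed to reach the coefficients $T$, $\tfrac{\sigma}{2}(2W_h-M_h)$, $\tfrac{\sigma}{2}(W_h-\tau M_h)$, $\tfrac{\sigma}{2}M_h$).
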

Then based on former results, we have the following convergence results.
\begin{theorem}\label{discrete convergence results}
Let $(y^*,u^*,z^*,p^*,\lambda^*)$ is the KKT point of {\rm(\ref{equ:approx discretized matrix-vector form})}, then the sequence $\{(u^{k},z^{k},\lambda^k)\}$ is generated by Algorithm {\rm\ref{algo4:inexact heterogeneous ADMM for problem RHP}} with the associated state $\{y^k\}$ and adjoint state $\{p^k\}$, then for any $\tau\in (0,1]$ and $\sigma\in (0, \frac{1}{4}\alpha]$, we have
\begin{eqnarray}
  \label{discrete iteration squence convergence1}&&\lim\limits_{k\rightarrow\infty}^{}\{\|u^{k}-u^*\|+\|z^{k}-z^*\|+\|\lambda^{k}-\lambda^*\| \}= 0\\
   \label{discrete iteration squence convergence2}&& \lim\limits_{k\rightarrow\infty}^{}\{\|y^{k}-y^*\|+\|p^{k}-p^*\| \}= 0
\end{eqnarray}
Moreover, there exists a constant $C$ only depending on the initial point ${(u^0,z^0,\lambda^0)}$ and the optimal solution ${(u^*,z^*,\lambda^*)}$ such that for $k\geq1$,
\begin{eqnarray}
  \label{discrete iteration complexity1}&&\min\limits^{}_{1\leq i\leq k} \{R_h(u^i,z^i,\lambda^i)\}\leq\frac{C}{k}, \quad
\lim\limits^{}_{k\rightarrow\infty}\left(k\times\min\limits^{}_{1\leq i\leq k} \{R_h(u^i,z^i,\lambda^i)\}\right) =0.
\end{eqnarray}
where $R_h(\cdot)$ is defined as in {\rm(\ref{discrete KKT function})}.
%
\begin{proof}
It is easy to see that $(u^*,z^*)$ is the unique optimal solution of discrete problem (\ref{equ:reduced approx discretized matrix-vector form}) if and only if there exists a Lagrangian multiplier $\lambda^*$ such that the following Karush-Kuhn-Tucker (KKT) conditions hold,
\begin{subequations}
\begin{eqnarray}
  \label{equ: exact variational inequalities1}&-M_h\lambda^*=\nabla f(u^*),\\
  \label{equ: exact variational inequalities2}&M_h\lambda^*\in \partial g(z^*),\\
  \label{equ: exact variational inequalities3}& u^*=z^*.
\end{eqnarray}
\end{subequations}

In the inexact heterogeneous ADMM iteration scheme, the optimality conditions for $(u^{k+1}, z^{k+1})$ are
\begin{subequations}
\begin{eqnarray}
  \label{equ: inexact variational inequalities1}&\delta^k-(M_h\lambda^k+\sigma M_h(u^{k+1}-z^k))=\nabla f(u^{k+1}),\\
  \label{equ: inexact variational inequalities2}&M_h\lambda^k+\sigma W_h(u^{k+1}-z^{k+1})\in \partial g(z^{k+1}).
\end{eqnarray}
\end{subequations}
Next, let us first prove the \textbf{global convergence of iteration sequences,} e.g., establish the proof of (\ref{discrete iteration squence convergence1}) and (\ref{discrete iteration squence convergence2}).

The first step is to show that $\{(u^k, z^k, \lambda^k)\}$ is bounded. We define the following sequence $\theta^k$ and $\bar\theta^k$ with:
\begin{equation}\label{iteration sequence}
  \begin{aligned}
  \theta^k &= \left(\frac{1}{\sqrt{2\tau\sigma}}M_h^{\frac{1}{2}}(\lambda^k-\lambda^*), \sqrt{\frac{\sigma}{2}}M_h^{\frac{1}{2}}(z^k-z^*)\right), \quad
   \bar\theta^k = \left(\frac{1}{\sqrt{2\tau\sigma}}M_h^{\frac{1}{2}}(\bar\lambda^k-\lambda^*), \sqrt{\frac{\sigma}{2}}M_h^{\frac{1}{2}}(\bar z^k-z^*)\right).
  \end{aligned}
\end{equation}
According to Proposition \ref{eqn:martix properties}, for any $\tau\in (0,1]$ and $\sigma\in (0, \frac{1}{4}\alpha]$ for, we have
  $\Sigma_f-\frac{\sigma}{2}(W_h-M_h) \succ 0$, and
  $\quad W_h-\tau M_h \succ 0$ .
Then, by Proposition \ref{descent proposition2}, we get $\|\bar\theta^{k+1}\|^2\leq\|\theta^k\|^2$. As a result, we have:
\begin{equation}\label{descent theta}
  \begin{aligned}
  \|\theta^{k+1}\| &\leq \|\bar\theta^{k+1}\|+\|\bar\theta^{k+1}-\theta^{k+1}\|
   = \|\bar\theta^{k}\|+\|\bar\theta^{k+1}-\theta^{k+1}\| .
  \end{aligned}
\end{equation}
Employing Lemma \ref{gap between exact and inexact solution}, we get
\begin{equation}\label{descent theta and bartheta}
  \begin{aligned}
  \|\bar\theta^{k+1}-\theta^{k+1}\|^2 &= \frac{1}{2\tau\sigma}\|\bar\lambda^{k+1}-\lambda^{k+1}\|^2_{M_h}+\frac{\sigma}{2}\|\bar z^{k+1}-z^{k+1}\|^2_{M_h} \\
  &\leq(2\tau+1/2)\sigma\|M_h\|\rho^2\epsilon_k^2\leq5/2\sigma\|M_h\|\rho^2\epsilon_k^2,
  \end{aligned}
\end{equation}
which implies $\|\bar\theta^{k+1}-\theta^{k+1}\|\leq\sqrt{5/2\sigma\|M_h\|}\rho\epsilon_k$. Hence, for any $k\geq0$, we have
\begin{equation}\label{boundedness theta and bartheta}
  \begin{aligned}
  \|\theta^{k+1}\| &\leq \|\theta^k\|+\sqrt{5/2\sigma\|M_h\|}\rho\epsilon_k
  \leq  \|\theta^0\|+\sqrt{5/2\sigma\|M_h\|}\rho\sum\limits^{\infty}_{k=0}\epsilon_k=\|\theta^0\|+\sqrt{5/2\sigma\|M_h\|}\rho C_1\equiv\bar\rho.
  \end{aligned}
\end{equation}
From $\|\bar\theta^{k+1}\|\leq\|\theta^{k}\|$, for any $k\geq0$, we also have $\|\bar\theta^{k+1}\|\leq\bar\rho$. Therefore, the sequences $\{\theta^k\}$ and $\{\bar \theta^k\}$ are bounded. From the definition of $\{\theta^k\}$ and the fact that $M_h\succ0$, we can see that the sequences $\{\lambda^k\}$ and $\{z^k\}$ are bounded. Moreover, from updating technique of $\lambda^k$, we know $\{u^k\}$ is also bounded. Thus, due to the boundedness of the sequence $\{(u^{k}, z^{k}, \lambda^k)\}$, we know the sequence has a subsequence
$\{(u^{k_i}, z^{k_i}, \lambda^{k_i})\}$ which converges to an accumulation point $(\bar u, \bar z, \bar\lambda)$. Next we should show that $(\bar u, \bar z, \bar\lambda)$ is a KKT point and equal to $(u^*, z^*, \lambda^*)$. 

Again employing Proposition \ref{descent proposition2}, we can derive
\begin{equation}\label{convergence inequlity}
  \begin{aligned}
&\sum\limits^{\infty}_{k=0}\left(\|\bar u^{k+1}-u^*\|^2_{T}
  +\frac{\sigma}{2}\|\bar z^{k+1}-z^*\|^2_{2W_h-M_h} +\frac{\sigma}{2}\|\bar r^{k+1}\|^2_{W_h-\tau M_h}+\frac{\sigma}{2}\|\bar u^{k+1}-z^k\|^2_{M_h}\right)\\
  \leq &\sum\limits^{\infty}_{k=0}(\|\theta^k\|^2-\|\theta^{k+1}\|^2+\|\theta^{k+1}\|^2-\|\bar \theta^{k+1}\|^2)
  \leq \|\theta^0\|^2+2\bar\rho\sqrt{5/2\sigma\|M_h\|}\rho C_1<\infty.
  \end{aligned}
\end{equation}
Note that $T\succ0, W_h-M_h\succ0, W_h-\tau M_h\succ0$ and $M_h\succ0$, then we have
\begin{equation}\label{limit convergence2}
  \begin{aligned}
  \lim\limits^{}_{k\rightarrow\infty}\|\bar u^{k+1}-u^*\|=0,\quad \lim\limits^{}_{k\rightarrow\infty}\|\bar z^{k+1}-z^*\|=0,\quad
  \lim\limits^{}_{k\rightarrow\infty} \|\bar r^{k+1}\|=0,\quad
  \lim\limits^{}_{k\rightarrow\infty}\|\bar u^{k+1}-z^k\|=0.
    \end{aligned}
\end{equation}
From the Lemma \ref{gap between exact and inexact solution}, we can get
\begin{equation}\label{limit convergence3}
  \begin{aligned}
  &\|u^{k+1}-u^*\|\leq\|\bar u^{k+1}-u^*\|+\|u^{k+1}-\bar u^{k+1}\|\leq\|\bar u^{k+1}-u^*\|+\rho\epsilon_k,\\
  &\|z^{k+1}-z^*\|\leq\|\bar z^{k+1}-z^*\|+\|z^{k+1}-\bar z^{k+1}\|\leq\|\bar z^{k+1}-z^*\|+\rho\epsilon_k.
    \end{aligned}
\end{equation}
From the fact that $\lim\limits^{}_{k\rightarrow\infty} \epsilon_k=0$ and (\ref{limit convergence2}), by taking the limit of both sides of (\ref{limit convergence3}), we have
\begin{equation}\label{limit convergence4}
  \begin{aligned}
  \lim\limits^{}_{k\rightarrow\infty}\| u^{k+1}-u^*\|=0,\quad \lim\limits^{}_{k\rightarrow\infty}\| z^{k+1}-z^*\|=0,\quad
  \lim\limits^{}_{k\rightarrow\infty} \| r^{k+1}\|=0,\quad
  \lim\limits^{}_{k\rightarrow\infty}\| u^{k+1}-z^k\|=0.
    \end{aligned}
\end{equation}
Now taking limits for $k_i\rightarrow\infty$ on both sides of (\ref{equ: inexact variational inequalities1}), we have
\begin{equation*}
  \lim\limits^{}_{k_i\rightarrow\infty}(\delta^{k_i}-(M_h\lambda^{k_i}+\sigma M_h(u^{k_i+1}-z^{k_i})))=\nabla f(u^{k_i+1}),
\end{equation*}
which results in $-M_h\bar\lambda=\nabla f(u^*)$. Then from (\ref{equ: exact variational inequalities1}), we know $\bar\lambda=\lambda^*$. At last, to complete the proof, we need to show that $\lambda^*$ is the limit of the sequence of $\{\lambda^k\}$. From
(\ref{boundedness theta and bartheta}), we have for any $k>k_i$,
  $\|\theta^{k+1}\|\leq\|\theta^{k_i}\|+\sqrt{5/2\sigma\|M_h\|}\rho\sum\limits^{k}_{j={k_i}}\epsilon_j$.
Since $\lim\limits^{}_{k_i\rightarrow\infty}\|\theta^{k_i}\|=0$ and $\sum\limits_{k=0}^{\infty}\epsilon_k<\infty$, we have that $\lim\limits^{}_{k\rightarrow\infty}\|\theta^{k}\|=0$, which implies $\lim\limits^{}_{k\rightarrow\infty}\| \lambda^{k+1}-\lambda^*\|=0$.
Hence, we have proved the convergence of the sequence $\{(u^{k+1}, z^{k+1}, \lambda^{k+1})\}$, which completes the proof of (\ref{discrete iteration squence convergence1}). For the proof of (\ref{discrete iteration squence convergence2}), it is easy to show by the definition of the sequence $\{(y^k, p^k)\}$, here we omit it.

At last, we establish the proof of (\ref{discrete iteration complexity1}), e.g., \textbf{the iteration complexity results in non-ergodic sendse for the sequence generated by the ihADMM.}

Firstly, by the optimality condition (\ref{equ: inexact variational inequalities1}) and (\ref{equ: inexact variational inequalities2}) for $(u^{k+1}, z^{k+1})$, we have
\begin{subequations}
\begin{eqnarray}
 \label{equ:discrete inexact variational inequalities3}&\delta^k+(\tau-1)\sigma M_h r^{k+1}-\sigma M_h(z^{k+1}-z^k)=M_h\lambda^{k+1}+\nabla f(u^{k+1}),\\
  \label{equ:discrete inexact variational inequalities4}&\sigma (W_h-\tau M_h)r^{k+1}\in -M_h\lambda^{k+1}+\partial g(z^{k+1}).
\end{eqnarray}
\end{subequations}
By the definition of $R_h$ and denoting $w^{k+1}:=(u^{k+1},z^{k+1},\lambda^{k+1})$, we derive
\begin{equation}\label{discrete KKT function for iteration}
\begin{aligned}
  R_h(w^{k+1})&=\|M_h\lambda^{k+1}+\nabla f(u^{k+1})\|^2+{\rm dist}^2(0, -M_h\lambda^{k+1}+\partial g(z^{k+1}))+\|u^{k+1}-z^{k+1}\|^2\\
  &\leq 2\|\delta^k\|^2+\eta\|r^{k+1}\|^2+4\sigma^2 \|M_h\|\|u^{k+1}-z^{k}\|_{M_h}^2,
\end{aligned}
\end{equation}
where $\eta:=2(\tau-1)^2\sigma^2 \|M_h\|^2+2\sigma^2 \|M_h\|^2+\sigma^2\| W_h-\tau M_h\|^2+1.$

In order to get a upper bound for $R_h(w^{k+1})$, we will use (\ref{inequlaities property1}) in Proposition
\ref{descent proposition}. First, by the definition of $\theta^{k}$ and (\ref{boundedness theta and bartheta}), for any $k\geq0$ we can easily have
\begin{eqnarray*}
  \|\lambda^k-\lambda^*\| \leq\bar\rho\sqrt{\frac{2\tau\sigma}{\|M_h^{-1}\|}} , \quad
  \|z^k-z^*\| \leq\bar\rho\sqrt{\frac{2}{\sigma\|M_h^{-1}\|}}.
\end{eqnarray*}
Next, we should give a upper bound for $\langle\delta^k, u^{k+1}-u^*\rangle$:
\begin{equation}\label{discrete inner product estimates}
\begin{aligned}
\langle\delta^k, u^{k+1}-u^*\rangle&\leq\|\delta^k\|(\|u^{k+1}-z^{k+1}\|+\|z^{k+1}-z^*\|)
              \leq\left(\left(1+\frac{2}{\sqrt{\tau}}\right)\frac{2\sqrt{2}\bar\rho}{\sqrt{\tau\sigma\|M_h^{-1}\|}}\right)\|\delta^k\|\equiv \bar\eta\|\delta^k\|.
\end{aligned}
\end{equation}
Then by (\ref{inequlaities property1}) in Proposition \ref{descent proposition}, we have
\begin{equation}\label{summable1}
  \begin{aligned}
  \sum\limits^{\infty}_{k=0}\left(\frac{\sigma}{2}\|r^{k+1}\|^2_{W_h-\tau M_h}
  +\frac{\sigma}{2}\|u^{k+1}-z^k\|^2_{M_h}\right)&\leq \sum\limits^{\infty}_{k=0}(\theta^{k}-\theta^{k+1})
  +\sum\limits^{\infty}_{k=0}\langle\delta^k,u^{k+1}-u^*\rangle \\
  &\leq \theta^0+\bar\eta\sum\limits^{\infty}_{k=0}\|\delta^k\|
\leq\theta^0+\bar\eta\sum\limits^{\infty}_{k=0}\epsilon^k=\theta^0+\bar\eta C_1.
  \end{aligned}
\end{equation}
Hence,
\begin{equation}\label{summable2}
  \begin{aligned}
\sum\limits^{\infty}_{k=0}\|r^{k+1}\|^2\leq \frac{2(\theta^0+\bar\eta C_1)}{\sigma\|(W_h-\tau M_h)^{-1}\|},\quad
\sum\limits^{\infty}_{k=0}\|u^{k+1}-z^k\|^2_{M_h}\leq \frac{2(\theta^0+\bar\eta C_1)}{\sigma}.
  \end{aligned}
\end{equation}
By substituting (\ref{summable2}) to (\ref{discrete KKT function for iteration}), we have
\begin{equation}\label{summable3}
  \begin{aligned}
\sum\limits^{\infty}_{k=0}R_h(w^{k+1})&\leq2\sum\limits^{\infty}_{k=0}\|\delta^k\|^2
+\eta\sum\limits^{\infty}_{k=0}\|r^{k+1}\|^2+2\sigma^2 \|M_h\|\sum\limits^{\infty}_{k=0}\|u^{k+1}-z^{k}\|_{M_h}^2\\
&\leq C:=2C_2+\eta\frac{2(\theta^0+\bar\eta C_1)}{\sigma\|(W_h-\tau M_h)^{-1}\|}+2\sigma^2 \|M_h\|\frac{2(\theta^0+\bar\eta C_1)}{\sigma}
  \end{aligned}
\end{equation}
Thus, by Lemma \ref{complexity lemma}, we know (\ref{discrete iteration complexity1}) holds. Therefore, combining the obtained global convergence results, we complete the whole proof of the Theorem \ref{discrete convergence results}.
\end{proof}
\end{theorem}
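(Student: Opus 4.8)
The plan is to follow the Lyapunov/merit-function route for inexact ADMM, adapted to the two weighted inner products ($M_h$-weighted in the $u$-subproblem, $W_h$-weighted in the $z$-subproblem) and to the summable inexactness $\|\delta^k\|\le\epsilon_k$. First I would record the KKT system of \eqref{equ:reduced approx discretized matrix-vector form}, namely \eqref{equ: exact variational inequalities1}--\eqref{equ: exact variational inequalities3}, and the inexact optimality conditions \eqref{equ: inexact variational inequalities1}--\eqref{equ: inexact variational inequalities2} coming from Steps 1--2 of Algorithm \ref{algo4:inexact heterogeneous ADMM for problem RHP}. The structural facts driving everything are: $\nabla f$ is strongly monotone with modulus $\Sigma_f=\frac{\alpha}{2}M_h+M_hK_h^{-1}M_hK_h^{-1}M_h\succ0$; $\partial g$ is maximal monotone with modulus $\frac{\alpha}{2}$ in the $W_h$-norm; and the spectral comparison $M_h\preceq W_h\preceq cM_h$ from Proposition \ref{eqn:martix properties}. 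These are exactly what make $T:=\Sigma_f-\frac{\sigma}{2}(W_h-M_h)$, $2W_h-M_h$ and $W_h-\tau M_h$ positive definite once $\tau\in(0,1]$ and $\sigma\in(0,\frac14\alpha]$, which is the hypothesis of the theorem; checking this positivity is where Proposition \ref{eqn:martix properties} is indispensable.

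For boundedness I would introduce the merit quantity $\theta^k=\bigl(\frac{1}{\sqrt{2\tau\sigma}}M_h^{1/2}(\lambda^k-\lambda^*),\ \sqrt{\frac{\sigma}{2}}M_h^{1/2}(z^k-z^*)\bigr)$ and its exact counterpart $\bar\theta^k$ built from the auxiliary exact iterates $(\bar u^{k+1},\bar z^{k+1})$ of \eqref{equ:exact u}--\eqref{equ:exact z}. Proposition \ref{descent proposition2} gives the clean descent $\|\bar\theta^{k+1}\|^2\le\|\theta^k\|^2$; combining $\|\theta^{k+1}\|\le\|\bar\theta^{k+1}\|+\|\theta^{k+1}-\bar\theta^{k+1}\|=\|\bar\theta^k\|+\|\theta^{k+1}-\bar\theta^{k+1}\|$ with the gap bounds of Lemma \ref{gap between exact and inexact solution} (which give $\|u^{k+1}-\bar u^{k+1}\|=O(\epsilon_k)$ and $\|z^{k+1}-\bar z^{k+1}\|=O(\epsilon_k)$, hence $\|\theta^{k+1}-\bar\theta^{k+1}\|=O(\epsilon_k)$) and $\sum_k\epsilon_k=C_1<\infty$ yields $\|\theta^k\|\le\|\theta^0\|+CC_1=:\bar\rho$ uniformly. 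Since $M_h\succ0$ this bounds $\{\lambda^k\}$ and $\{z^k\}$, and then $\{u^k\}$ is bounded via the multiplier update $\lambda^{k+1}=\lambda^k+\tau\sigma(u^{k+1}-z^{k+1})$.

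To get full-sequence convergence I would telescope Proposition \ref{descent proposition2}: the nonnegative terms $\|\bar u^{k+1}-u^*\|_T^2$, $\frac{\sigma}{2}\|\bar z^{k+1}-z^*\|^2_{2W_h-M_h}$, $\frac{\sigma}{2}\|\bar r^{k+1}\|^2_{W_h-\tau M_h}$, $\frac{\sigma}{2}\|\bar u^{k+1}-z^k\|^2_{M_h}$ sum to at most a telescoped part plus a multiple of $\sum_k\|\theta^{k+1}-\bar\theta^{k+1}\|$, which is finite; hence each term $\to0$, and Lemma \ref{gap between exact and inexact solution} transfers this to the true iterates, giving $\|u^{k+1}-u^*\|\to0$, $\|z^{k+1}-z^*\|\to0$, $\|r^{k+1}\|=\|u^{k+1}-z^{k+1}\|\to0$. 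Passing to the limit along a convergent subsequence in \eqref{equ: inexact variational inequalities1} with $\delta^k\to0$ and $u^k\to u^*$ shows any accumulation point $\bar\lambda$ of $\{\lambda^k\}$ satisfies $-M_h\bar\lambda=\nabla f(u^*)$, so $\bar\lambda=\lambda^*$ by \eqref{equ: exact variational inequalities1} and $M_h\succ0$; the uniform bound \eqref{boundedness theta and bartheta} together with $\sum_k\epsilon_k<\infty$ then upgrades this to $\|\theta^k\|\to0$, i.e.\ $\|\lambda^k-\lambda^*\|\to0$, proving \eqref{discrete iteration squence convergence1}. Since $y^k$ and $p^k$ are recovered from $(u^k,z^k,\lambda^k)$ by continuous (affine) maps, \eqref{discrete iteration squence convergence2} follows at once.

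Finally, for the $o(1/k)$ complexity I would rewrite \eqref{equ: inexact variational inequalities1}--\eqref{equ: inexact variational inequalities2} in residual form so that an element of $M_h\lambda^{k+1}+\nabla f(u^{k+1})$ and an element of $-M_h\lambda^{k+1}+\partial g(z^{k+1})$ are controlled by $\delta^k$, $r^{k+1}$ and $u^{k+1}-z^k$; this gives $R_h(w^{k+1})\le 2\|\delta^k\|^2+\eta\|r^{k+1}\|^2+4\sigma^2\|M_h\|\|u^{k+1}-z^k\|^2_{M_h}$ for an explicit $\eta$. Then, using Proposition \ref{descent proposition} (the one keeping $(u^*,z^*,\lambda^*)$), bounding $\langle\delta^k,u^{k+1}-u^*\rangle\le\bar\eta\|\delta^k\|$ via the uniform bound $\bar\rho$ on $\|\theta^k\|$, and telescoping, I obtain $\sum_k\bigl(\frac{\sigma}{2}\|r^{k+1}\|^2_{W_h-\tau M_h}+\frac{\sigma}{2}\|u^{k+1}-z^k\|^2_{M_h}\bigr)\le\theta^0+\bar\eta C_1<\infty$, hence $\sum_k\|r^{k+1}\|^2$ and $\sum_k\|u^{k+1}-z^k\|^2_{M_h}$ are finite, and with $\sum_k\|\delta^k\|^2=C_2<\infty$ we conclude $\sum_k R_h(w^{k+1})<\infty$; Lemma \ref{complexity lemma} then yields both $\min_{1\le i\le k}R_h(\cdot)\le C/k$ and $k\min_{1\le i\le k}R_h(\cdot)\to0$. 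The main obstacle I anticipate is not any single estimate but the bookkeeping of inexactness: every telescoping step must absorb the cross terms in $\delta^k$ and the exact-versus-inexact gap without breaking summability, and one must keep verifying that the weighted-norm matrices $T$, $2W_h-M_h$, $W_h-\tau M_h$ stay positive definite under the restriction $\sigma\le\alpha/4$, $\tau\le1$ — the precise reason those bounds on $\sigma,\tau$ appear in the statement.
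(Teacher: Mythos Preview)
Your proposal is correct and follows essentially the same approach as the paper: the same Lyapunov pair $\theta^k,\bar\theta^k$, the same use of Proposition~\ref{descent proposition2} for descent plus Lemma~\ref{gap between exact and inexact solution} for the exact/inexact gap to obtain boundedness and full-sequence convergence, and then Proposition~\ref{descent proposition} together with Lemma~\ref{complexity lemma} for the $o(1/k)$ complexity. Even the specific bound $R_h(w^{k+1})\le 2\|\delta^k\|^2+\eta\|r^{k+1}\|^2+4\sigma^2\|M_h\|\|u^{k+1}-z^k\|_{M_h}^2$ and the reasoning that positivity of $T$, $2W_h-M_h$, $W_h-\tau M_h$ is what forces $\sigma\le\alpha/4$, $\tau\le 1$ match the paper exactly.
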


\subsection{Numerical computation of the $u$-subproblem of Algorithm \ref{algo4:inexact heterogeneous ADMM for problem RHP}}\label{subsection linear sysytem}
\subsubsection{Error analysis of the linear system {(\ref{eqn:saddle point4})}}
As we know, the linear system (\ref{eqn:saddle point4}) is a special case of the generalized saddle-point problem, thus some Krylov-based methods could be employed to inexactly solve the linear system. Let $(r^k_1, r^k_2)$ be the residual error vector, that means:
\begin{equation}\label{eqn:saddle point4 with error}
\left[
  \begin{array}{cc}
    \frac{1}{0.5\alpha+\sigma}M_h & K_h \\
    -K_h & M_h
  \end{array}
\right]\left[
         \begin{array}{c}
           y^{k+1} \\
           u^{k+1}
         \end{array}
       \right]=\left[
                 \begin{array}{c}
                   \frac{1}{0.5\alpha+\sigma}(K_h(\sigma z^k-\lambda^k)+M_hy_d)+r_1\\
                   -M_hy_c+r_2
                 \end{array}
               \right],
\end{equation}
and $\delta^k=(0.5\alpha+\sigma)M_hK_h^{-1}r_1^k+M_hK_h^{-1}M_hK_h^{-1}r_2^k$, thus in the numerical implementation we require
\begin{equation}\label{error estimates1}
  \|r^k_1\|_{2}+\|r^k_2\|_{2}\leq\frac{\epsilon_k}{\sqrt{2}\|M_hK_h^{-1}\|_2\max\{\|M_hK_h^{-1}\|_{2},0.5\alpha+\sigma\}}
\end{equation}
to guarantee the error vector $\|{\delta}^k\|_{2} \leq {\epsilon_k}$.

\subsubsection{An efficient precondition techniques for solving the linear systems}\label{precondition}
To solve (\ref{eqn:saddle point4}), in this paper, we use the generalized minimal residual {\rm (GMRES)} method. In order to speed up the convergence of the {\rm GMRES} method, the preconditioned variant of modified hermitian and skew-hermitian splitting {\rm(PMHSS)} preconditioner $\mathcal{P}$ is employed which is introduced in {\rm \cite{Bai}}:
\begin{equation}\label{precondition matrix1}
  \mathcal{P_{HSS}}=\frac{1}{\gamma}\left[
    \begin{array}{lcc}
I & \quad\sqrt{\gamma}I \\
      -\sqrt{\gamma}I & \quad\gamma I \\
    \end{array}
  \right]\left[
           \begin{array}{lcc}
             M_h+\sqrt{\gamma}K_h & 0 \\
             0 & M_h+\sqrt{\gamma}K_h \\
           \end{array}
         \right],
\end{equation}
where $\gamma=0.5\alpha+\sigma$. Let $\mathcal{A}$ denote the coefficient matrix of linear system (\ref{eqn:saddle point4}).
%

In our numerical experiments, the approximation $\widehat{G}$ corresponding to the matrix $G:=M_h+\sqrt{\gamma}K_h$ is implemented by 20 steps of Chebyshev semi-iteration when the parameter $\gamma$ is small, since in this case the coefficient matrix $G$ is dominated by the mass matrix and 20 steps of Chebyshev semi-iteration is an appropriate approximation for the action of $G$'s inverse. For more details on the Chebyshev semi-iteration method we refer to {\rm\cite{ReDoWa,chebysevsemiiteration}}. Meanwhile, for the large values of $\gamma$, the stiffness matrix $K_h$ makes a significant contribution. Hence, a fixed number of Chebyshev semi-iteration is no longer sufficient to approximate the action of $G^{-1}$. In this case, the way to avoid this difficulty is to approximate the action of $G^{-1}$ with two AMG V-cycles, which obtained by the \textbf{amg} operator in the iFEM software package\footnote{\noindent \textrm{For more details about the iFEM software package, we refer to the website \url{http://www.math.uci.edu/~chenlong/programming.html} }}.

%
\subsubsection{Terminal condition}\label{terminal condition}
Let $\epsilon$ be a given accuracy tolerance. Thus we terminate our ihADMM method when $\eta\leq\epsilon$,
where
  $\eta=\max{\{\eta_1,\eta_2,\eta_3,\eta_4,\eta_5\}}$,
in which
\begin{equation*}
  \begin{aligned}
     & \eta_1=\frac{\|K_hy- M_hu-M_h y_c\|}{1+\|M_hy_c\|},\quad \eta_2=\frac{\|M_h(u-z)\|}{1+\|u\|},\quad
     \eta_3=\frac{\|M_h(y- y_d)+ K_hp\|}{1+\|M_hy_d\|},\\
 &\eta_4=\frac{\|0.5\alpha M_hu -M_hp + M_h\lambda\|}{1+\|u\|},\quad
     \eta_5=\frac{\|z-{\rm\Pi}_{[a,b]}\left(\frac{\alpha}{2}{\rm soft}(W_h^{-1}M_h\lambda, \beta\right)\|}{1+\|u\|}.
  \end{aligned}
\end{equation*}
\subsection{A two-phase strategy for discrete problems}\label{terminal condition for PDAS}
In this section, we introduce the primal-dual active set (PDAS) method as a Phase-II algorithm to solve the discretized problem.

For problem (\ref{equ:approx discretized matrix-vector form}), eliminating artificial variable $z$, we have
\begin{equation}\label{approx discretized matrix-vector form for PDAS}
 \left\{ \begin{aligned}
        &\min \limits_{(y,u)\in\mathbb{R}^{2N_h}}^{}\frac{1}{2}\|y-y_d\|_{M_h}^{2}+\frac{\alpha}{4}\|u\|_{M_h}^{2}+\frac{\alpha}{4}\|u\|_{W_h}^{2}+\beta\|W_hu\|_1 \\
        &\quad \quad {\rm{s.t.}}\qquad K_hy=M_hu+M_hy_c  \\
          &\qquad \qquad  \quad\quad u\in [a,b]^{N_h}
                          \end{aligned} \right.\tag{$\mathrm{\overline{P}}_{h}$}
 \end{equation}
The full numerical scheme is summarized in Algorithm \ref{algo1:discrete Primal-Dual Active Set (PDAS) method}:

\begin{algorithm}[H]
  \caption{Primal-Dual Active Set (PDAS) method}
  \label{algo1:discrete Primal-Dual Active Set (PDAS) method}
Initialization: Choose $y^0$, $u^0$, $p^0$ and $\mu^0$. Set $k=0$ and $c>0$.
\begin{description}
\item[Step 1] Determine the following subsets
\begin{equation*}\begin{aligned}
    \mathcal{A}^{k+1}_a &= \{i: u_i^{k}+c(\mu_i^{k}+w_i\beta)-a<0\},\quad
  \mathcal{A}^{k+1}_b = \{i: u^{k}_i+c(\mu^{k}_i-w_i\beta)-b>0\}, \\
  \mathcal{A}^{k+1}_0 &= \{i: |u^{k}_i+c\mu^{k}_i|<cw_i\beta)\}, \quad
  \mathcal{I}^{k+1}_+ = \{i: cw_i\beta<u^{k}_i+c\mu^{k}_i<b+cw_i\beta)\}, \\
  \mathcal{I}^{k+1}_- &= \{i: a-cw_i\beta<u^{k}_i+c\mu^{k}_i<-cw_i\beta)\}.
  \end{aligned}
\end{equation*}
\item[Step 2] Solve the following system
\begin{equation*}
\left\{\begin{aligned}
        &K_hy^{k+1} - M_hu^{k+1}=0, \\
  &K_hp^{k+1} + M_h(y^{k+1}-y_d)=0,\\
  &\alpha T_hu^{k+1}-M_hp^{k+1}+\mu^{k+1} = 0,
 \end{aligned}\right.
\end{equation*}
 where $T_h=\frac{1}{2}(M_h+W_h)$, and
\begin{equation*}
 u^{k+1}=\left\{\begin{aligned}
  a\quad {\rm a.e.\ on}~ \mathcal{A}^{k+1}_{a}\\
  b\quad {\rm a.e.\ on}~ \mathcal{A}^{k+1}_{b}\\
  0\quad {\rm a.e.\ on}~ \mathcal{A}^{k+1}_{0},
   \end{aligned}\right. \qquad and\quad    \mu_i^{k+1}=\left\{\begin{aligned}
  -w_i\beta\quad {\rm a.e.\ on}~ \mathcal{I}^{k+1}_{-}\\
  w_i\beta\quad {\rm a.e.\ on}~ \mathcal{I}^{k+1}_{+}\\
    \forall i=1,2,...,N_h.
   \end{aligned}\right.
\end{equation*}
  \item[Step 3] If a termination criterion is not met, set $k:=k+1$ and go to Step 1
\end{description}
\end{algorithm}

In actual numerical implementations, let $\epsilon$ be a given accuracy tolerance. Thus we terminate our Phase-II algorithm (PDAS method) when $\eta\leq\epsilon$,
where $\eta=\max{\{\eta_1,\eta_2,\eta_3\}}$ and
\begin{equation*}
  \begin{aligned}
     & \eta_1=\frac{\|K_hy- M_hu-M_h y_c\|}{1+\|M_hy_c\|},\quad\quad\eta_2=\frac{\|M_h(y- y_d)+ K_hp\|}{1+\|M_hy_d\|}, \\
     &\eta_3=\frac{\|u-{\rm\Pi}_{[a,b]}\left(\frac{\alpha}{2}{\rm soft}(W_h^{-1}M_h(p-u), \beta\right)\|}{1+\|u\|}.
  \end{aligned}
\end{equation*}

\subsection{\textbf {Algorithms for comparison}}\label{subsection5.4}
In this section, in order to show the high efficiency of our ihADMM and two-phase strategy, we introduce the details of some mentioned existing methods for sparse optimal control problems.

As a comparison, one can only employ the PDAS method to solve (\ref{approx discretized matrix-vector form for PDAS}). An important issue for the successful application of the PDAS scheme, is the use of a robust line-search method for globalization purposes. However, since there exist a nonsmooth term $\beta\|W_hu\|_1$ in the objective function of (\ref{approx discretized matrix-vector form for PDAS}), we do not have differentiability (in the classical sense) of the minimizing function and the classical Armijo, Wolfe and Goldstein line search schemes can not be used. To overcome this difficulty, an alternative approach, i.e., the derivative-free line-search (DFLS) procedure, is used. For more details of DFLS, one can refer to \cite{DFLS1}. Then a globalized version of PDAS with DFLS is given.
In addition, as we have mentioned in Section \ref{intro}, instead of our ihADMM method and PDAS method, one can also apply the APG method \cite{FIP} to solve problem (\ref{approx discretized matrix-vector form for PDAS}) for the sake of numerical comparison, see \cite{FIP} for more details of the APG method.

\section{Numerical results}\label{sec:5}
In this section, we will use the following example to evaluate the numerical behaviour of our ihADMM and two-phase strategy for problem (\ref{equ:approx discretized matrix-vector form}) and verify the theoretical error estimates given in Section \ref{sec:3}. For comparison, we will also show the numerical results obtained by the classical ADMM and the APG algorithm, and the PDAS with line search.

\subsection{Algorithmic Details}\label{subsec:5.1}
\quad

\textbf{Discretization.} As show in Section \ref{sec:3}, the discretization was carried out by using piecewise linear and continuous finite elements. 
The assembly of mass and the stiffness matrices, as well as the lump mass matrix was left to the iFEM software package. To present the finite element error estimates results, it is convenient to introduce the experimental order of convergence (EOC), which for some positive error functional $E(h)$ with $h> 0$ is defined as follows: Given two grid sizes $h_1\neq h_2$, let
\begin{equation}\label{EOC}
  \mathrm{EOC}:=\frac{\log {E(h_1)}-\log {E(h_2)}}{\log{h_1}-\log {h_2}}.
\end{equation}
It follows from this definition that if $E(h)=\mathcal{O}(h^{\gamma})$ then $\mathrm{EOC}\approx\gamma$. The error functional $E(\cdot)$ investigated in the present section is given by
  $E_2(h):=\|u-u_h\|_{L^2{(\Omega)}}$.

\textbf{Initialization.} For all numerical examples, we choose $u=0$ as initialization $u^0$ for all algorithms.

\textbf{Parameter Setting.} For the classical ADMM and our ihADMM, the penalty parameter $\sigma$ was chosen as $\sigma=0.1 \alpha$. About the step-length $\tau$, we choose $\tau=1.618$ for the classical ADMM, and $\tau=1$ for our ihADMM. For the PDAS method, the parameter in the active set strategy was chosen as $c=1$. For the APG method, we estimate an approximation for the Lipschitz constant $L$ with a backtracking method.

\textbf{Terminal Condition.} In our numerical experiments, we measure the accuracy of an approximate optimal solution by using the corresponding K-K-T residual error for each algorithm. For the purpose of showing the efficiency of our ihADMM, we report the numerical results obtained by running the classical ADMM and the APG method to compare with the results obtained by our ihADMM. In this case, we terminate all the algorithms when $\eta<10^{-6}$ with the maximum number of iterations set at 500. Additionally, we also employ our two-phase strategy to obtain more accurate solution. As a comparison, a globalized version of the PDAS algorithm are also shown. In this case, we terminate the our ihADMM when $\eta<10^{-3}$ to warm-start the PDAS algorithm which is terminated when $\eta<10^{-10}$. Similarly, we terminate the PDAS algorithm with DFLS when $\eta<10^{-10}$.

\textbf{Computational environment.}
All our computational results are obtained by MATLAB Version 8.5(R2015a) running on a computer with
64-bit Windows 7.0 operation system, Intel(R) Core(TM) i7-5500U CPU (2.40GHz) and 8GB of memory.
\subsection{Examples}\label{subsec:5.2}
%

\begin{example}\label{example:1}
  \begin{equation*}
     \left\{ \begin{aligned}
        &\min \limits_{(y,u)\in H^1_0(\Omega)\times L^2(\Omega)}^{}\ \ J(y,u)=\frac{1}{2}\|y-y_d\|_{L^2(\Omega)}^{2}+\frac{\alpha}{2}\|u\|_{L^2(\Omega)}^{2}+\beta\|u\|_{L^1(\Omega)} \\
        &\qquad\quad{\rm s.t.}\qquad \quad\quad\quad-\Delta y=u+y_c\quad \mathrm{in}\  \Omega,\\
         &\qquad \qquad \quad\qquad \qquad \qquad  ~y=0\quad  \mathrm{on}\ \partial\Omega,\\
         &\quad\qquad \qquad \qquad\qquad  u\in U_{ad}=\{v(x)|a\leq v(x)\leq b, {\rm a.e }\  \mathrm{on}\ \Omega \}.
                          \end{aligned} \right.
 \end{equation*}\end{example}
Here, we consider the problem with control $u\in L^2(\Omega)$ on the unit square $\Omega= (0, 1)^2$ with $\alpha=0.5, \beta=0.5$, $a=-0.5$ and $b=0.5$. It is a constructed problem, thus we set $y^*=\sin(\pi x_1)\sin(\pi x_2)$ and $p^*=2\beta\sin(2\pi x_1)\exp(0.5x_1)\sin(4\pi x_2)$. Then through $u^*=\mathrm{\Pi}_{U_{ad}}\left(\frac{1}{\alpha}{\rm{soft}}\left(p^*,\beta\right)\right)$, $y_c=y^*-\mathcal{S}u^*$ and $y_d=\mathcal{S}^{-*}p^{*}+y^*$, we can construct the example for which we know the exact solution.


An example for the discretized optimal control on mesh $h=2^{-7}$ is shown in Figure \ref{example1fig:control on h=$2^{-7}$}. The error of the control $u$ w.r.t the $L^2$-norm and the experimental order of convergence (EOC) for control are presented in Table \ref{tab:1}. They also confirm that indeed the convergence rate is of order $O(h)$.

Numerical results for the accuracy of solution, number of iterations and cpu time obtained by our ihADMM, classical ADMM and APG methods are shown in Table \ref{tab:1}. As a result from Table \ref{tab:1}, we can see that our proposed ihADMM method is an efficient algorithm to solve problem (\ref{equ:approx discretized matrix-vector form}) to medium accuracy. Moreover, it is obvious that our ihADMM outperform the classical ADMM and the APG method in terms of in CPU time, especially when the discretization is in a fine level. It is worth noting that although the APG method require less number of iterations when the termination condition is satisfied, the APG method spend much time on backtracking step with the aim of finding an appropriate approximation for the Lipschitz constant. This is the
reason that our ihADMM has better performance than the APG method in actual numerical implementation. Furthermore, the numerical results in terms of iteration numbers illustrate the mesh-independent performance of the ihADMM and the APG method, except for the classical ADMM.

In addition, to obtain more accurate solution, we employ our two-phase strategy. The numerical results are shown in Table \ref{tab:2}. In order to show our the power and the importance of our two-phase framework, as a comparison, numerical results obtained by the PDAS with line search are also shown in Table \ref{tab:2}. It can be observed that our two-phase strategy is faster and more efficient than the PDAS with line search in terms of the iteration numbers and CPU time.

\begin{figure}[H]\scriptsize
\begin{center}
\includegraphics[width=0.40\textwidth]{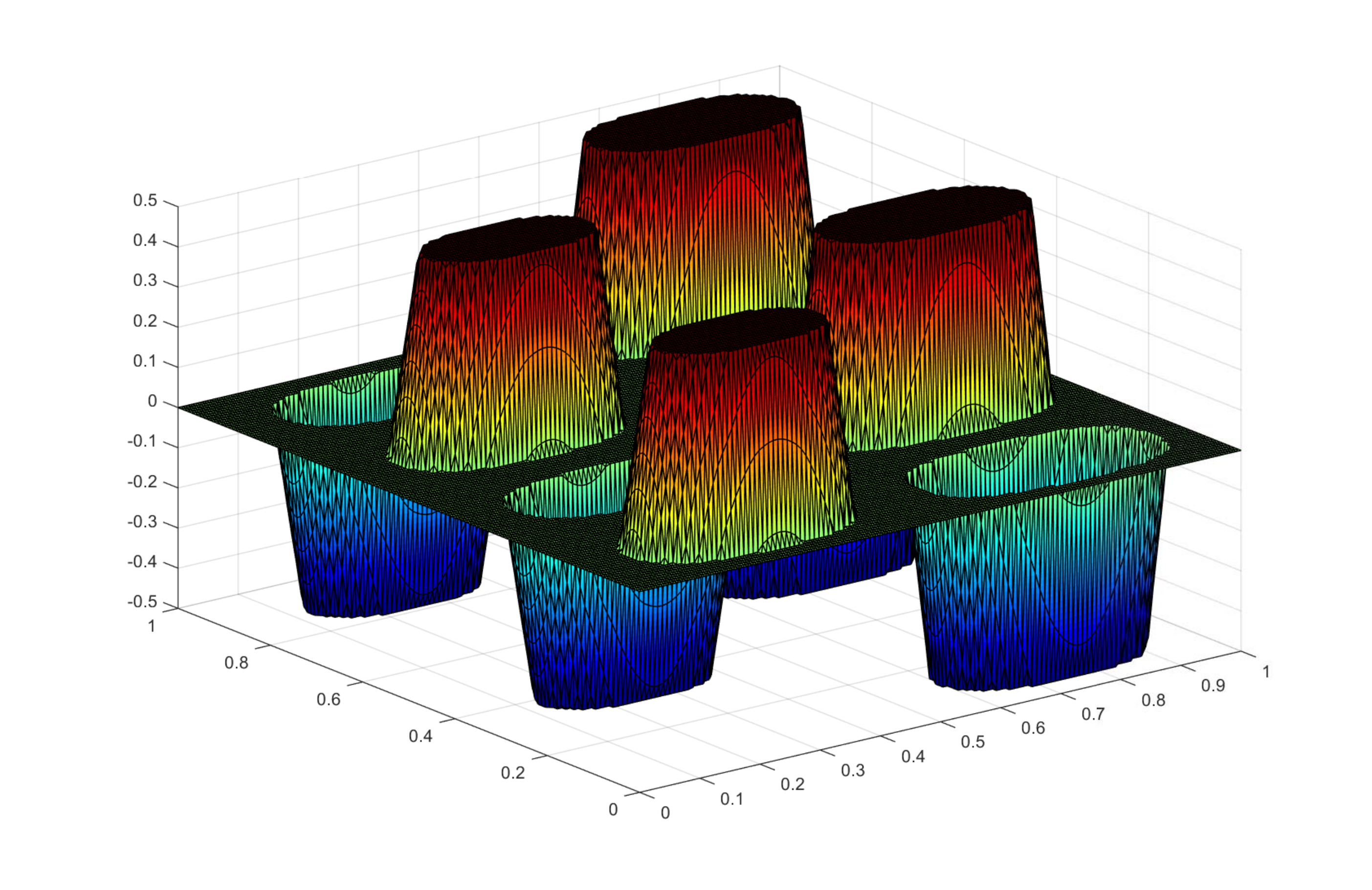}
\includegraphics[width=0.40\textwidth]{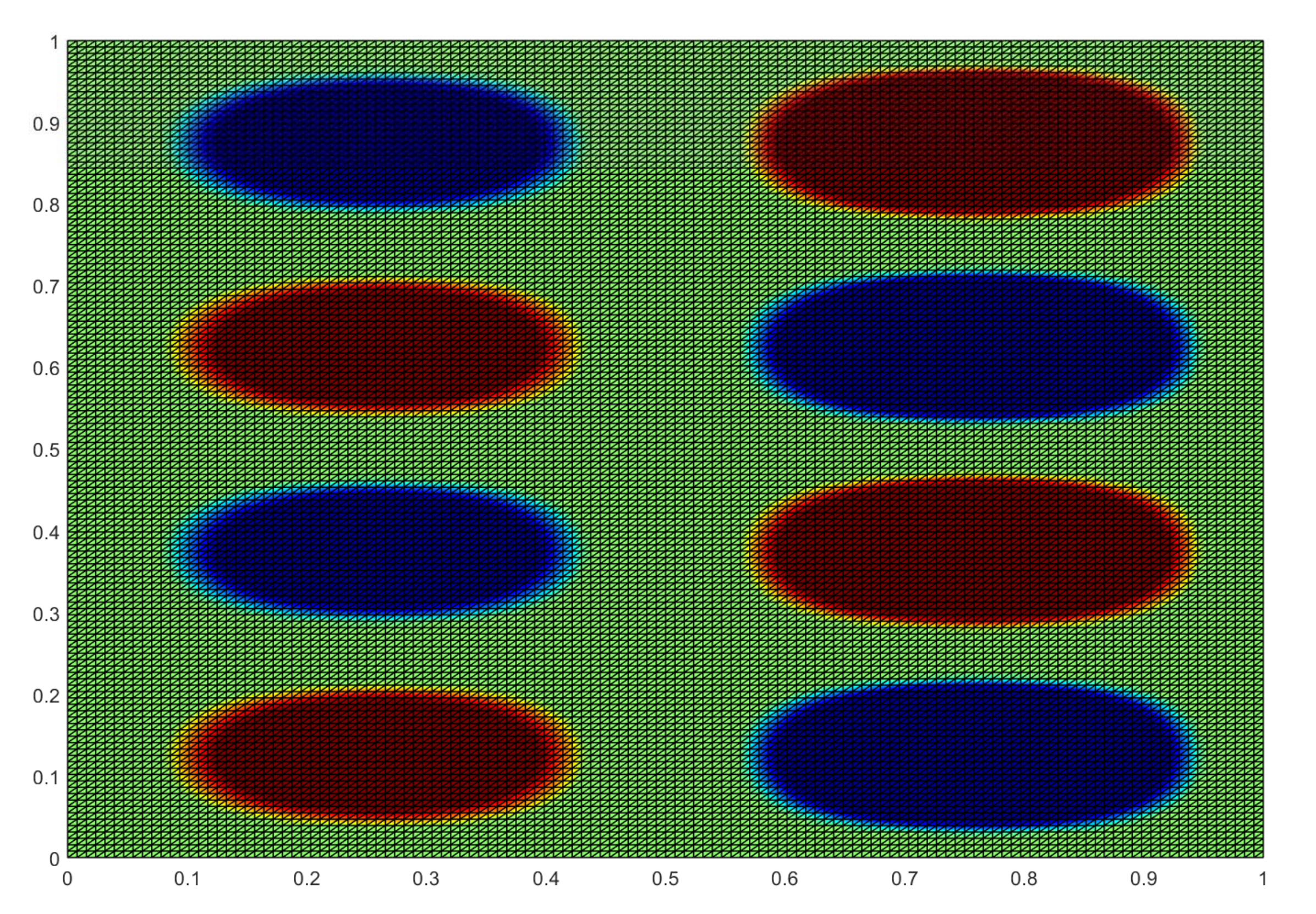}
\caption{Optimal control $u_h$ on the square, $h=2^{-7}$. Dark red and dark blue areas correspond to $u_h=\pm 0.5$ and green areas to $u_h=0$}\label{example1fig:control on h=$2^{-7}$}.
\end{center}
\end{figure}

\begin{example}\label{example:2}\rm{\cite[Example 1]{Stadler}}
  \begin{equation*}
     \left\{ \begin{aligned}
        &\min \limits_{(y,u)\in Y\times U}^{}\ \ J(y,u)=\frac{1}{2}\|y-y_d\|_{L^2(\Omega)}^{2}+\frac{\alpha}{2}\|u\|_{L^2(\Omega)}^{2}+{\beta}\|u\|_{L^1(\Omega)} \\
        &\quad{\rm s.t.}\qquad -\Delta y=u,\quad \mathrm{in}\  \Omega=(0,1)\times(0,1) \\
         &\qquad \qquad \qquad y=0,\quad  \mathrm{on}\ \partial\Omega\\
         &\qquad \qquad\qquad  u\in U_{ad}=\{v(x)|a\leq v(x)\leq b, {\rm a.e }\  \mathrm{on}\ \Omega \},
                          \end{aligned} \right.
 \end{equation*}
\end{example}
where the desired state $y_d=\frac{1}{6}\sin(2\pi x)\exp(2x)\sin(2\pi y)$, the parameters $\alpha=10^{-5}$, $\beta=10^{-3}$, $a=-30$ and $b=30$. In addition, the exact solutions of the problem is unknown. Instead we use the numerical solutions computed on a grid with $h^*=2^{-10}$ as reference solutions.

An example for the discretized optimal control on mesh $h=2^{-7}$ is displayed in Figure \ref{fig:discretized control on h=$2^{-7}$}. The error of the control $u$ w.r.t the $L^2$ norm with respect to the solution on the finest grid ($h^*=2^{-10}$) and the experimental order of convergence (EOC) for control are presented in Table \ref{tab:3}. They confirms the linear rate of convergence w.r.t. $h$ as proved in Theorem \ref{theorem:error2} and Corollary \ref{corollary:error1}.

Numerical results for the accuracy of solution, number of iterations and cpu time obtained by our ihADMM, classical ADMM and APG methods are also shown in Table \ref{tab:3}. Experiment results show that the ADMM has evident advantage over the classical ADMM and the APG method in computing time. Furthermore, the numerical results in terms of iteration numbers also illustrate the mesh-independent performance of our ihADMM. In addition, in Table \ref{tab:4}, we give the numerical results obtained by our two-phase strategy and the PDAS method with line search. As a result from Table \ref{tab:4}, it can be observed that our two-phase strategy outperform the PDAS with line search in terms of the CPU time. These results demonstrate that our ihADMM is highly efficient in obtaining  an approximate solution with moderate accuracy. And our two-phase strategy could represent an effective alternative to PDAS method.

\begin{figure}[H]
\centering
\includegraphics[width=0.35\textwidth]{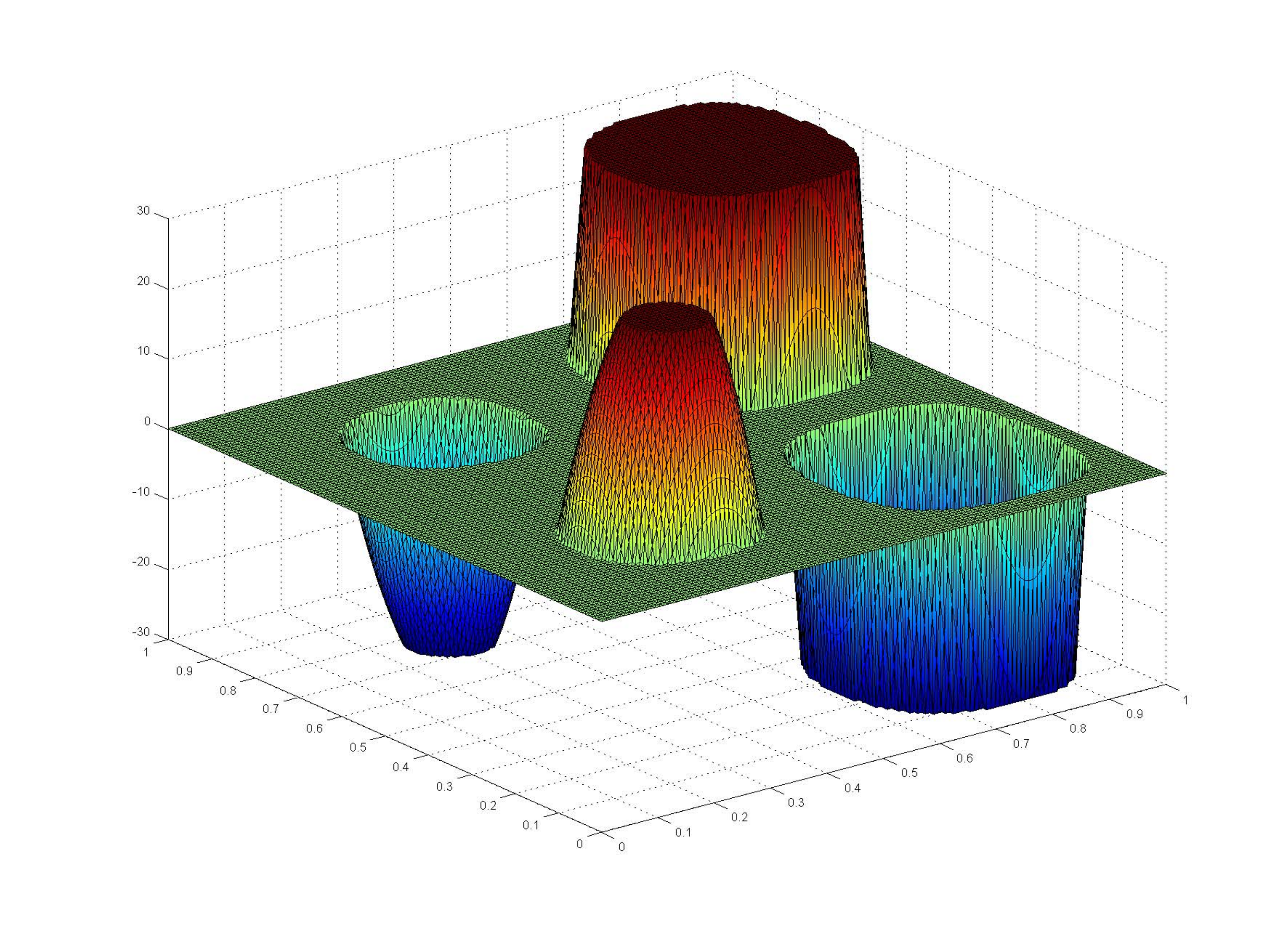}
\includegraphics[width=0.35\textwidth]{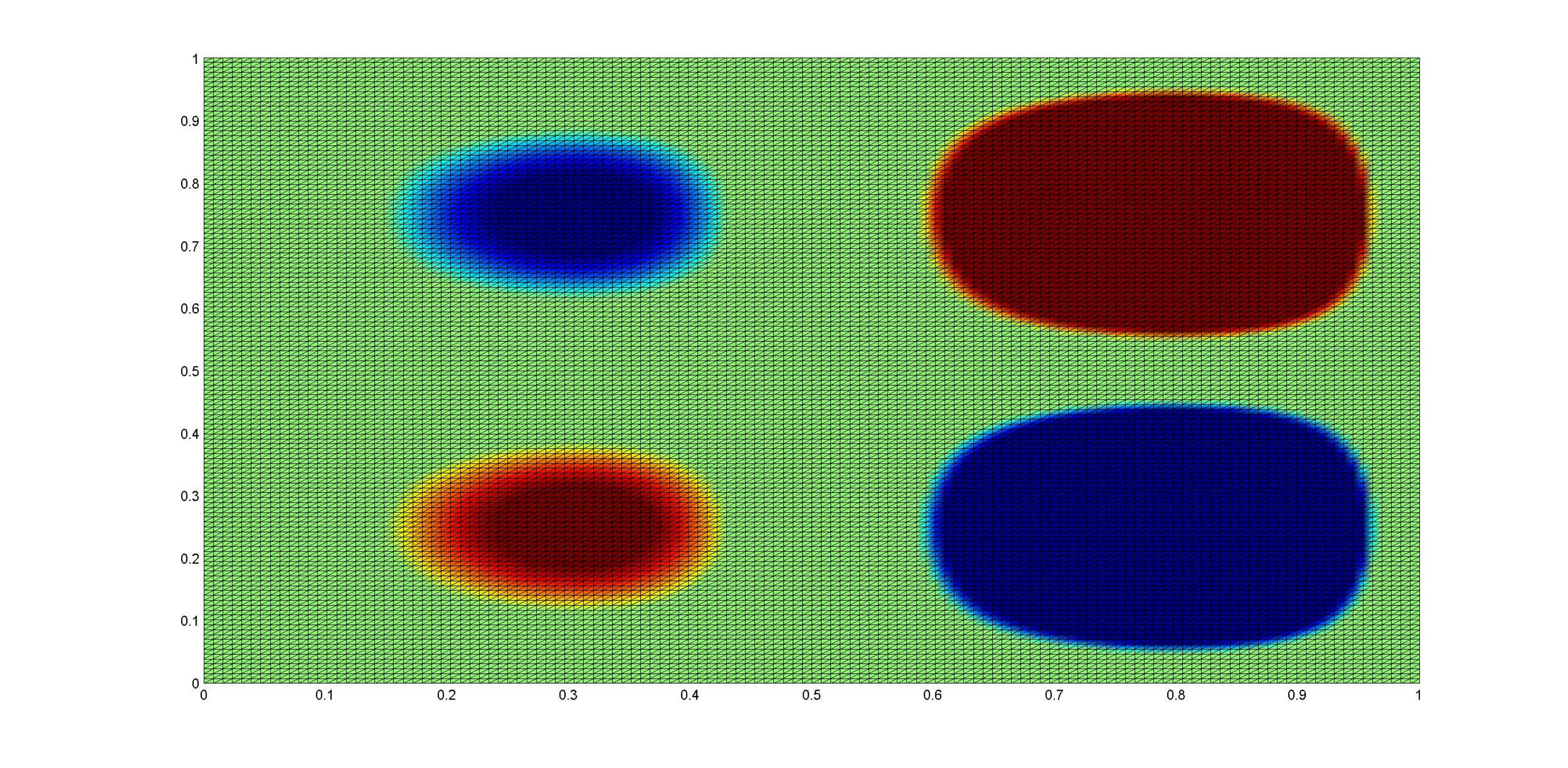}
\caption{Optimal control $u_h$ on the square, $h=2^{-7}$. Dark red and dark blue areas correspond to $u_h=\pm 30$ and green areas to $u_h=0$}\label{fig:discretized control on h=$2^{-7}$}
\end{figure}

\section{Concluding remarks}\label{sec:6}
In this paper, elliptic PDE-constrained optimal control problems with $L^1$-control cost ($L^1$-EOCP) are considered. In order to make discretized problems have a decoupled form, instead of directly using the standard piecewise linear finite element to discretize the problem, we utilize nodal quadrature formulas to approximately discretize the $L^1$-norm and $L^2$-norm. It was proven that these approximation steps do not change the order of error estimates. By taking advantage of inherent structures of the problem, we proposed an inexact heterogeneous ADMM (ihADMM) to solve discretized problems. Furthermore, theoretical results on the global convergence as well as the iteration complexity results $o(1/k)$ for ihADMM were given. Moreover, in order to obtain more accurate solution, a two-phase strategy was introduced, in which the primal-dual active set (PDAS) method is used as a postprocessor of the ihADMM. Numerical results demonstrated the efficiency of our ihADMM and the two-phase strategy.
\section*{Acknowledgments}
The authors would like to thank Dr. Long Chen for the FEM package
iFEM \cite{Chen} in Matlab and also would like to thank the colleagues for their valuable suggestions that led to improvement in this paper.


\begin{table}[H]
\caption{Example \ref{example:1}: The convergence behavior of our ihADMM, classical ADMM and APG for (\ref{equ:approx discretized matrix-vector form}). In the table, $\#$dofs stands for the number of degrees of freedom for the control variable on each grid level.}\label{tab:1}
\begin{center}
\begin{tabular}{@{\extracolsep{\fill}}|c|c|c|c|c|c|c|c|}
\hline
\hline
\multirow{2}{*}{$h$}&\multirow{2}{*}{$\#$dofs}&\multirow{2}{*}{$E_2$}&\multirow{2}{*}{EOC}& \multirow{2}{*}{Index} &\multirow{2}{*}{ihADMM} & \multirow{2}{*}{classical ADMM} & \multirow{2}{*}{APG} \\
&&&&&&&\\
\hline

                            &&&&\multirow{2}{*}{iter}            &\multirow{2}{*}{27} &\multirow{2}{*}{32} &\multirow{2}{*}{13}              \\
                            &&&&&&&\\
\multirow{2}{*}{$2^{-3}$}   &\multirow{2}{*}{49}&\multirow{2}{*}{0.3075}&\multirow{2}{*}{--}
                            &\multirow{2}{*}{residual $\eta$}
                            &\multirow{2}{*}{7.15e-07}            &\multirow{2}{*}{7.55e-07}
                            &\multirow{2}{*}{6.88e-07}             \\
                            &&&&&&&\\
                            &&&&\multirow{2}{*}{CPU time/s}          &\multirow{2}{*}{0.19} &\multirow{2}{*}{0.23} &\multirow{2}{*}{0.18}             \\
                            &&&&&&&\\
\hline
                            &&&&\multirow{2}{*}{iter}            &\multirow{2}{*}{31} &\multirow{2}{*}{44} &\multirow{2}{*}{13}             \\
                            &&&&&&&\\
\multirow{2}{*}{$2^{-4}$}   &\multirow{2}{*}{225}&\multirow{2}{*}{0.1237}&\multirow{2}{*}{1.3137}&\multirow{2}{*}{residual $\eta$}
                            &\multirow{2}{*}{9.77e-07}            &\multirow{2}{*}{9.91e-07}
                            &\multirow{2}{*}{8.23e-07}             \\
                            &&&&&&&\\
                            &&&&\multirow{2}{*}{CPU times/s}             &\multirow{2}{*}{0.37} &\multirow{2}{*}{0.66} &\multirow{2}{*}{0.32}  \\
                            &&&&&&&\\
\hline
                            &&&&\multirow{2}{*}{iter}            &\multirow{2}{*}{31} &\multirow{2}{*}{58} &\multirow{2}{*}{12}            \\
                            &&&&&&&\\
\multirow{2}{*}{$2^{-5}$}   &\multirow{2}{*}{961}&\multirow{2}{*}{0.0516}&\multirow{2}{*}{1.2870}
                            &\multirow{2}{*}{residual $\eta$}
                            &\multirow{2}{*}{7.41e-07}&\multirow{2}{*}{8.11e-07}
                            &\multirow{2}{*}{7.58e-07}             \\
                            &&&&&&&\\
                            &&&&\multirow{2}{*}{CPU time/s}          &\multirow{2}{*}{1.02} &\multirow{2}{*}{2.32} &\multirow{2}{*}{1.00}            \\
                            &&&&&&&\\
\hline
                            &&&&\multirow{2}{*}{iter}            &\multirow{2}{*}{32} &\multirow{2}{*}{76} &\multirow{2}{*}{14}              \\
                            &&&&&&&\\
\multirow{2}{*}{$2^{-6}$}   &\multirow{2}{*}{3969}&\multirow{2}{*}{0.0201}&\multirow{2}{*}{1.3112}&\multirow{2}{*}{residual $\eta$}
                            &\multirow{2}{*}{7.26e-07}&\multirow{2}{*}{8.10e-07}
                            &\multirow{2}{*}{7.88e-07}             \\
                            &&&&&&&\\
                            &&&&\multirow{2}{*}{CPU time/s}          &\multirow{2}{*}{4.18} &\multirow{2}{*}{9.12} &\multirow{2}{*}{4.25}            \\
                            &&&&&&&\\
\hline
                            &&&&\multirow{2}{*}{iter}            &\multirow{2}{*}{31} &\multirow{2}{*}{94} &\multirow{2}{*}{14}              \\
                            &&&&&&&\\
\multirow{2}{*}{$2^{-7}$}   &\multirow{2}{*}{16129}&\multirow{2}{*}{0.0078}&\multirow{2}{*}{1.3252}&\multirow{2}{*}{residual $\eta$}
                            &\multirow{2}{*}{ 5.33e-07}&\multirow{2}{*}{7.85e-07}
                            &\multirow{2}{*}{4.45e-07}             \\
                            &&&&&&&\\
                            &&&&\multirow{2}{*}{CPU time/s}          &\multirow{2}{*}{17.72} &\multirow{2}{*}{65.82} &\multirow{2}{*}{26.25}            \\
                            &&&&&&&\\
\hline
                            &&&&\multirow{2}{*}{iter}            &\multirow{2}{*}{32} &\multirow{2}{*}{127} &\multirow{2}{*}{13}              \\
                            &&&&&&&\\
\multirow{2}{*}{$2^{-8}$}   &\multirow{2}{*}{65025}&\multirow{2}{*}{0.0026}&\multirow{2}{*}{1.3772}
                            &\multirow{2}{*}{residual $\eta$}
                            &\multirow{2}{*}{6.88e-07}&\multirow{2}{*}{8.93e-07}
                            &\multirow{2}{*}{7.47e-07}             \\
                            &&&&&&&\\
                            &&&&\multirow{2}{*}{CPU time/s}          &\multirow{2}{*}{70.45} &\multirow{2}{*}{312.65} &\multirow{2}{*}{80.81}            \\
                            &&&&&&&\\
\hline
                            &&&&\multirow{2}{*}{iter}            &\multirow{2}{*}{31} &\multirow{2}{*}{255} &\multirow{2}{*}{13}              \\
                            &&&&&&&\\
\multirow{2}{*}{$2^{-9}$}   &\multirow{2}{*}{261121}&\multirow{2}{*}{0.0009}&\multirow{2}{*}{1.4027}
                            &\multirow{2}{*}{residual $\eta$}
                            &\multirow{2}{*}{7.43e-07}&\multirow{2}{*}{7.96e-07}
                            &\multirow{2}{*}{6.33e-07}             \\
                            &&&&&&&\\
                            &&&&\multirow{2}{*}{CPU time/s}          &\multirow{2}{*}{525.28} &\multirow{2}{*}{4845.31} &\multirow{2}{*}{620.55}            \\
                            &&&&&&&\\
\hline
\end{tabular}
\end{center}
\end{table}

\begin{table}[H]
\caption{Example \ref{example:1}: The convergence behavior of our two-phase strategy, PDAS with line search.}\label{tab:2}
\begin{center}
\begin{tabular}{@{\extracolsep{\fill}}|c|c|c|cc|c|c|c|}
\hline
\hline
\multirow{2}{*}{$h$}&\multirow{2}{*}{$\#$dofs}& \multirow{2}{*}{Index of performance} &\multicolumn{2}{c|}{Two-Phase strategy}& \multirow{2}{*}{PDAS with line search} \\
\cline{4-5}
& & &ihADMM\ $+$\ PDAS&& \\
\hline
                            &&\multirow{2}{*}{iter}            &\multirow{2}{*}{13\quad $+$\quad 5} &&\multirow{2}{*}{21} \\
                            &&&&&\\
\multirow{2}{*}{$2^{-3}$}   &\multirow{2}{*}{49}
                            &\multirow{2}{*}{residual $\eta$}
                            &\multirow{2}{*}{8.55e-12}            & &\multirow{2}{*}{7.88e-12} \\
                            &&&&&\\
                            &&\multirow{2}{*}{CPU time/s}          &\multirow{2}{*}{0.17} &&\multirow{2}{*}{0.32} \\
                            &&&&&\\
\hline
                            &&\multirow{2}{*}{iter}            &\multirow{2}{*}{13\quad $+$\quad 6} &&\multirow{2}{*}{22} \\
                            &&&&&\\
\multirow{2}{*}{$2^{-4}$}   &\multirow{2}{*}{225}&\multirow{2}{*}{residual $\eta$}
                            &\multirow{2}{*}{1.24e-11}           & &\multirow{2}{*}{1.87e-11}
            \\
                            &&&&&\\
                            &&\multirow{2}{*}{CPU times/s}             &\multirow{2}{*}{0.27} &&\multirow{2}{*}{0.54}  \\
                            &&&&&\\
\hline
                            &&\multirow{2}{*}{iter}            &\multirow{2}{*}{14\quad $+$\quad 5} &&\multirow{2}{*}{22} \\
                            &&&&&\\
\multirow{2}{*}{$2^{-5}$}   &\multirow{2}{*}{961}
                            &\multirow{2}{*}{residual $\eta$}
                            &\multirow{2}{*}{8.10e-12}          & &\multirow{2}{*}{8.42e-12}            \\
                            &&&&&\\
                            &&\multirow{2}{*}{CPU time/s}          &\multirow{2}{*}{0.95} &&\multirow{2}{*}{2.07} \\
                            &&&&&\\
\hline
                            &&\multirow{2}{*}{iter}            &\multirow{2}{*}{14\quad $+$\quad 6} &&\multirow{2}{*}{23} \\
                            &&&&&\\
\multirow{2}{*}{$2^{-6}$}   &\multirow{2}{*}{3969}&\multirow{2}{*}{residual $\eta$}
                            &\multirow{2}{*}{4.15e-12}          &  &\multirow{2}{*}{4.00e-12}             \\
                            &&&&&\\
                            &&\multirow{2}{*}{CPU time/s}          &\multirow{2}{*}{3.65} &&\multirow{2}{*}{6.98}\\
                            &&&&&\\
\hline
                            &&\multirow{2}{*}{iter}            &\multirow{2}{*}{15\quad $+$\quad 6} &&\multirow{2}{*}{23} \\
                            &&&&&\\
\multirow{2}{*}{$2^{-7}$}   &\multirow{2}{*}{16129}&\multirow{2}{*}{residual $\eta$}
                            &\multirow{2}{*}{ 1.43e-12}        &   &\multirow{2}{*}{1.52e-12}          \\
                            &&&&&\\
                            &&\multirow{2}{*}{CPU time/s}          &\multirow{2}{*}{22.10} &&\multirow{2}{*}{43.13} \\
                            &&&&&\\
\hline
                            &&\multirow{2}{*}{iter}            &\multirow{2}{*}{15\quad $+$\quad 5} &&\multirow{2}{*}{24} \\
                            &&&&&\\
\multirow{2}{*}{$2^{-8}$}   &\multirow{2}{*}{65025}
                            &\multirow{2}{*}{residual $\eta$}
                            &\multirow{2}{*}{5.21e-12}        &     &\multirow{2}{*}{5.03e-12}             \\
                            &&&&&\\
                            &&\multirow{2}{*}{CPU time/s}          &\multirow{2}{*}{68.22} &&\multirow{2}{*}{140.18}\\
                            &&&&&\\
\hline
                            &&\multirow{2}{*}{iter}            &\multirow{2}{*}{15\quad $+$\quad 6} &&\multirow{2}{*}{24} \\
                            &&&&&\\
\multirow{2}{*}{$2^{-9}$}   &\multirow{2}{*}{261121}
                            &\multirow{2}{*}{residual $\eta$}
                            &\multirow{2}{*}{3.77e-12}       &     &\multirow{2}{*}{3.76e-12}            \\
                            &&&&&\\
                            &&\multirow{2}{*}{CPU time/s}&\multirow{2}{*}{540.57} &&\multirow{2}{*}{1145.63}\\
                            &&&&&\\
\hline
\end{tabular}
\end{center}
\end{table}

\begin{table}[H]
\caption{Example \ref{example:2}: The convergence behavior of ihADMM, classical ADMM and APG for (\ref{equ:approx discretized matrix-vector form}). 
}\label{tab:3}
\begin{center}
\begin{tabular}{@{\extracolsep{\fill}}|c|c|c|c|c|c|c|c|}
\hline
\hline
\multirow{2}{*}{$h$}&\multirow{2}{*}{$\#$dofs}&\multirow{2}{*}{$E_2$}&\multirow{2}{*}{EOC}& \multirow{2}{*}{Index} &\multirow{2}{*}{ihADMM} & \multirow{2}{*}{classical ADMM} & \multirow{2}{*}{APG} \\
&&&&&&&\\
\hline

                            &&&&\multirow{2}{*}{iter}            &\multirow{2}{*}{40} &\multirow{2}{*}{48} &\multirow{2}{*}{18}              \\
                            &&&&&&&\\
\multirow{2}{*}{$2^{-3}$}   &\multirow{2}{*}{49}&\multirow{2}{*}{0.3075}&\multirow{2}{*}{--}
                            &\multirow{2}{*}{residual $\eta$}
                            &\multirow{2}{*}{8.22e-07}            &\multirow{2}{*}{8.65e-07}
                            &\multirow{2}{*}{7.96e-07}             \\
                            &&&&&&&\\
                            &&&&\multirow{2}{*}{CPU time/s}          &\multirow{2}{*}{0.30} &\multirow{2}{*}{0.51} &\multirow{2}{*}{0.24}             \\
                            &&&&&&&\\
\hline
                            &&&&\multirow{2}{*}{iter}            &\multirow{2}{*}{41} &\multirow{2}{*}{56} &\multirow{2}{*}{18}             \\
                            &&&&&&&\\
\multirow{2}{*}{$2^{-4}$}   &\multirow{2}{*}{225}&\multirow{2}{*}{0.1237}&\multirow{2}{*}{1.3137}&\multirow{2}{*}{residual $\eta$}
                            &\multirow{2}{*}{7.22e-07}            &\multirow{2}{*}{8.01e-07}
                            &\multirow{2}{*}{7.58e-07}             \\
                            &&&&&&&\\
                            &&&&\multirow{2}{*}{CPU times/s}             &\multirow{2}{*}{0.45} &\multirow{2}{*}{0.71} &\multirow{2}{*}{0.44}  \\
                            &&&&&&&\\
\hline
                            &&&&\multirow{2}{*}{iter}            &\multirow{2}{*}{40} &\multirow{2}{*}{69} &\multirow{2}{*}{19}            \\
                            &&&&&&&\\
\multirow{2}{*}{$2^{-5}$}   &\multirow{2}{*}{961}&\multirow{2}{*}{0.0516}&\multirow{2}{*}{1.2870}
                            &\multirow{2}{*}{residual $\eta$}
                            &\multirow{2}{*}{8.12e-07}&\multirow{2}{*}{8.01e-07}
                            &\multirow{2}{*}{7.90e-07}             \\
                            &&&&&&&\\
                            &&&&\multirow{2}{*}{CPU time/s}          &\multirow{2}{*}{1.60} &\multirow{2}{*}{3.05} &\multirow{2}{*}{1.58}            \\
                            &&&&&&&\\
\hline
                            &&&&\multirow{2}{*}{iter}            &\multirow{2}{*}{42} &\multirow{2}{*}{85} &\multirow{2}{*}{18}              \\
                            &&&&&&&\\
\multirow{2}{*}{$2^{-6}$}   &\multirow{2}{*}{3969}&\multirow{2}{*}{0.0201}&\multirow{2}{*}{1.3112}&\multirow{2}{*}{residual $\eta$}
                            &\multirow{2}{*}{6.11e-07}&\multirow{2}{*}{7.80e-07}
                            &\multirow{2}{*}{6.45e-07}             \\
                            &&&&&&&\\
                            &&&&\multirow{2}{*}{CPU time/s}          &\multirow{2}{*}{7.25} &\multirow{2}{*}{14.62} &\multirow{2}{*}{7.45}            \\
                            &&&&&&&\\
\hline
                            &&&&\multirow{2}{*}{iter}            &\multirow{2}{*}{40} &\multirow{2}{*}{108} &\multirow{2}{*}{18}              \\
                            &&&&&&&\\
\multirow{2}{*}{$2^{-7}$}   &\multirow{2}{*}{16129}&\multirow{2}{*}{0.0078}&\multirow{2}{*}{1.3252}&\multirow{2}{*}{residual $\eta$}
                            &\multirow{2}{*}{ 6.35e-07}&\multirow{2}{*}{7.11e-07}
                            &\multirow{2}{*}{5.62e-07}             \\
                            &&&&&&&\\
                            &&&&\multirow{2}{*}{CPU time/s}          &\multirow{2}{*}{33.85} &\multirow{2}{*}{101.36} &\multirow{2}{*}{34.39}            \\
                            &&&&&&&\\
\hline
                            &&&&\multirow{2}{*}{iter}            &\multirow{2}{*}{41} &\multirow{2}{*}{132} &\multirow{2}{*}{19}              \\
                            &&&&&&&\\
\multirow{2}{*}{$2^{-8}$}   &\multirow{2}{*}{65025}&\multirow{2}{*}{0.0026}&\multirow{2}{*}{1.3772}
                            &\multirow{2}{*}{residual $\eta$}
                            &\multirow{2}{*}{7.55e-07}&\multirow{2}{*}{7.83e-07}
                            &\multirow{2}{*}{7.57e-07}             \\
                            &&&&&&&\\
                            &&&&\multirow{2}{*}{CPU time/s}          &\multirow{2}{*}{158.62} &\multirow{2}{*}{508.65} &\multirow{2}{*}{165.75}            \\
                            &&&&&&&\\
\hline
                            &&&&\multirow{2}{*}{iter}            &\multirow{2}{*}{42} &\multirow{2}{*}{278} &\multirow{2}{*}{18}              \\
                            &&&&&&&\\
\multirow{2}{*}{$2^{-9}$}   &\multirow{2}{*}{261121}&\multirow{2}{*}{0.0009}&\multirow{2}{*}{1.4027}
                            &\multirow{2}{*}{residual $\eta$}
                            &\multirow{2}{*}{5.25e-07}&\multirow{2}{*}{5.56e-07}
                            &\multirow{2}{*}{4.85e-07}             \\
                            &&&&&&&\\
                            &&&&\multirow{2}{*}{CPU time/s}          &\multirow{2}{*}{1781.98} &\multirow{2}{*}{11788.52} &\multirow{2}{*}{1860.11}            \\
                            &&&&&&&\\
\hline
                            &&&&\multirow{2}{*}{iter}            &\multirow{2}{*}{41} &\multirow{2}{*}{500} &\multirow{2}{*}{19}              \\
                            &&&&&&&\\
\multirow{2}{*}{$2^{-10}$}   &\multirow{2}{*}{1046529}&\multirow{2}{*}{--}&\multirow{2}{*}{1.4027}
                            &\multirow{2}{*}{residual $\eta$}
                            &\multirow{2}{*}{8.78e-07}&\multirow{2}{*}{Error}
                            &\multirow{2}{*}{8.47e-07}             \\
                            &&&&&&&\\
                            &&&&\multirow{2}{*}{CPU time/s}          &\multirow{2}{*}{42033.79} &\multirow{2}{*}{Error} &\multirow{2}{*}{44131.27}            \\
                            &&&&&&&\\
\hline
\end{tabular}
\end{center}
\end{table}

\begin{table}[H]
\caption{Example \ref{example:2}: The behavior of two-phase strategy and the PDAS method.}\label{tab:4}
\begin{center}
\begin{tabular}{@{\extracolsep{\fill}}|c|c|c|cc|c|c|c|}
\hline
\hline
\multirow{2}{*}{$h$}&\multirow{2}{*}{$\#$dofs}& \multirow{2}{*}{Index of performance} &\multicolumn{2}{c|}{Two-Phase strategy}& \multirow{2}{*}{PDAS with line search} \\
\cline{4-5}
& & &ihADMM\ $+$\ PDAS&& \\
\hline
                            &&\multirow{2}{*}{iter}            &\multirow{2}{*}{18\quad $+$\quad 8} &&\multirow{2}{*}{24} \\
                            &&&&&\\
\multirow{2}{*}{$2^{-3}$}   &\multirow{2}{*}{49}
                            &\multirow{2}{*}{residual $\eta$}
                            &\multirow{2}{*}{4.45e-12}            & &\multirow{2}{*}{4.36e-12} \\
                            &&&&&\\
                            &&\multirow{2}{*}{CPU time/s}          &\multirow{2}{*}{0.35} &&\multirow{2}{*}{0.53} \\
                            &&&&&\\
\hline
                            &&\multirow{2}{*}{iter}            &\multirow{2}{*}{18\quad $+$\quad 8} &&\multirow{2}{*}{25} \\
                            &&&&&\\
\multirow{2}{*}{$2^{-4}$}   &\multirow{2}{*}{225}&\multirow{2}{*}{residual $\eta$}
                            &\multirow{2}{*}{5.84e-12}           & &\multirow{2}{*}{6.01e-11}
            \\
                            &&&&&\\
                            &&\multirow{2}{*}{CPU times/s}             &\multirow{2}{*}{0.68} &&\multirow{2}{*}{1.02}  \\
                            &&&&&\\
\hline
                            &&\multirow{2}{*}{iter}            &\multirow{2}{*}{19\quad $+$\quad 7} &&\multirow{2}{*}{24} \\
                            &&&&&\\
\multirow{2}{*}{$2^{-5}$}   &\multirow{2}{*}{961}
                            &\multirow{2}{*}{residual $\eta$}
                            &\multirow{2}{*}{6.89e-12}          & &\multirow{2}{*}{6.87e-12}            \\
                            &&&&&\\
                            &&\multirow{2}{*}{CPU time/s}          &\multirow{2}{*}{1.98} &&\multirow{2}{*}{2.99} \\
                            &&&&&\\
\hline
                            &&\multirow{2}{*}{iter}            &\multirow{2}{*}{18\quad $+$\quad 8} &&\multirow{2}{*}{26} \\
                            &&&&&\\
\multirow{2}{*}{$2^{-6}$}   &\multirow{2}{*}{3969}&\multirow{2}{*}{residual $\eta$}
                            &\multirow{2}{*}{2.15e-11}          &  &\multirow{2}{*}{2.28e-11}             \\
                            &&&&&\\
                            &&\multirow{2}{*}{CPU time/s}          &\multirow{2}{*}{8.42} &&\multirow{2}{*}{12.63}\\
                            &&&&&\\
\hline
                            &&\multirow{2}{*}{iter}            &\multirow{2}{*}{19\quad $+$\quad 7} &&\multirow{2}{*}{25} \\
                            &&&&&\\
\multirow{2}{*}{$2^{-7}$}   &\multirow{2}{*}{16129}&\multirow{2}{*}{residual $\eta$}
                            &\multirow{2}{*}{ 4.06e-11}        &   &\multirow{2}{*}{3.88e-11}          \\
                            &&&&&\\
                            &&\multirow{2}{*}{CPU time/s}          &\multirow{2}{*}{43.45} &&\multirow{2}{*}{65.18} \\
                            &&&&&\\
\hline
                            &&\multirow{2}{*}{iter}            &\multirow{2}{*}{20\quad $+$\quad 8} &&\multirow{2}{*}{25} \\
                            &&&&&\\
\multirow{2}{*}{$2^{-8}$}   &\multirow{2}{*}{65025}
                            &\multirow{2}{*}{residual $\eta$}
                            &\multirow{2}{*}{8.45e-12}        &     &\multirow{2}{*}{8.72e-12}             \\
                            &&&&&\\
                            &&\multirow{2}{*}{CPU time/s}          &\multirow{2}{*}{189.04} &&\multirow{2}{*}{283.20}\\
                            &&&&&\\
\hline
                            &&\multirow{2}{*}{iter}            &\multirow{2}{*}{20\quad $+$\quad 8} &&\multirow{2}{*}{26} \\
                            &&&&&\\
\multirow{2}{*}{$2^{-9}$}   &\multirow{2}{*}{261121}
                            &\multirow{2}{*}{residual $\eta$}
                            &\multirow{2}{*}{7.33e-12}       &     &\multirow{2}{*}{7.21e-12}            \\
                            &&&&&\\
                            &&\multirow{2}{*}{CPU time/s}&\multirow{2}{*}{2155.01} &&\multirow{2}{*}{3232.63}\\
                            &&&&&\\
\hline
                            &&\multirow{2}{*}{iter}            &\multirow{2}{*}{20\quad $+$\quad 8} &&\multirow{2}{*}{26} \\
                            &&&&&\\
\multirow{2}{*}{$2^{-10}$}   &\multirow{2}{*}{1046529}
                            &\multirow{2}{*}{residual $\eta$}
                            &\multirow{2}{*}{9.58e-12}       &     &\multirow{2}{*}{9.73e-12}            \\
                            &&&&&\\
                            &&\multirow{2}{*}{CPU time/s}&\multirow{2}{*}{58049.57} &&\multirow{2}{*}{87035.63}\\
                            &&&&&\\
\hline
\end{tabular}
\end{center}
\end{table}
\end{document}